\newcommand{\R}{\mathbb{R}}
\newcommand{\C}{\mathbb{C}}
\newcommand{\Z}{\mathbb{Z}}
\newcommand{\N}{\mathbb{N}}
\newcommand{\T}{\mathbb{T}}
\newcommand{\bS}{\mathbb{S}}
\newcommand{\sD}{\mathscr{D}}
\newcommand{\sF}{\mathscr{F}}
\newcommand{\sL}{\mathscr{L}}
\newcommand{\sH}{\mathscr{H}}
\newcommand{\sK}{\mathscr{K}}
\newcommand{\sM}{\mathscr{M}}
\newcommand{\sMs}{\mathscr{M}_{\textup{s}}}
\newcommand{\sS}{\mathscr{S}}
\newcommand{\sV}{\mathscr{V}}
\newcommand{\sW}{\mathscr{W}}
\newcommand{\fM}{\mathfrak{M}}
\newcommand{\co}{\mathfrak{c}_{0}}
 \newcommand{\psido}{$\Psi$DO}
 \newcommand{\psidos}{$\Psi$DOs}
\def\Xint#1{\mathchoice
{\XXint\displaystyle\textstyle{#1}}%
{\XXint\textstyle\scriptstyle{#1}}%
{\XXint\scriptstyle\scriptscriptstyle{#1}}%
{\XXint\scriptscriptstyle\scriptscriptstyle{#1}}%
\!\int}
\def\XXint#1#2#3{{\setbox0=\hbox{$#1{#2#3}{\int}$}
\vcenter{\hbox{$#2#3$}}\kern-.5\wd0}}
\def\dashint{\Xint-}
\newcommand{\bint}{\ensuremath{\dashint}}
\newcommand{\op}{\operatorname} 
\newcommand{\limw}{{\lim}_{\omega}\,}
\newcommand{\tr}{\op{tr}}
\newcommand{\Tr}{\op{Tr}}
\newcommand{\Trw}{\Tr_{\omega}}
\newcommand{\ran}{\op{ran}}
\newcommand{\Com}{\op{Com}}
\newcommand{\End}{\op{End}}
\newcommand{\Res}{\op{Res}}
\newcommand{\scal}[2]{\ensuremath{\left\langle #1 | #2 \right\rangle}} 
\newcommand{\acou}[2]{\ensuremath{\left\langle #1 , #2 \right\rangle}} 
\newcommand{\bigacou}[2]{\ensuremath{\big\langle #1 , #2 \big\rangle}}
\newcommand{\supp}{\op{supp}}
\def\Xint#1{\mathchoice
{\XXint\displaystyle\textstyle{#1}}%
{\XXint\textstyle\scriptstyle{#1}}%
{\XXint\scriptstyle\scriptscriptstyle{#1}}%
{\XXint\scriptscriptstyle\scriptscriptstyle{#1}}%
\!\int}
\def\XXint#1#2#3{{\setbox0=\hbox{$#1{#2#3}{\int}$ }
\vcenter{\hbox{$#2#3$ }}\kern-.6\wd0}}
\numberwithin{equation}{section}
\newtheorem{theorem}{Theorem}[section]
\newtheorem{proposition}[theorem]{Proposition}
\newtheorem{corollary}[theorem]{Corollary}
\newtheorem{lemma}[theorem]{Lemma}
\newtheorem*{conjecture*}{Conjecture}
\theoremstyle{definition}
\newtheorem{definition}[theorem]{Definition}
\theoremstyle{remark}
\newtheorem{remark}[theorem]{Remark}
\newtheorem*{claim*}{Claim} 
\newcommand{\LlogL}{L\!\log\!L}
\newcommand{\ssD}{\slashed{D}}
\title{Weyl's Laws and Connes' Integration Formulas for Matrix-Valued $\LlogL$-Orlicz Potentials}
\author{Rapha\"el Ponge}
 \address{School of Mathematics, Sichuan University, Chengdu, China}
 \email{ponge.math@icloud.com}
\begin{document}
\begin{abstract}
Thanks to the Birman-Schwinger principle, Weyl's laws for Birman-Schwinger operators yields semiclassical Weyl's laws for the corresponding Schr\"odinger operators. In a recent preprint Rozenblum established quite general Weyl's laws for Birman-Schwinger operators associated with pseudodifferential operators of critical order and potentials that are product of $\LlogL$-Orlicz functions and Alfhors-regular measures supported on a submanifold. In this paper, we show that, for matrix-valued $\LlogL$-Orlicz potentials supported on the whole manifold, Rozenblum's results are direct consequences of the Cwikel-type estimates on tori recently established by Sukochev-Zanin. As applications we obtain CLR-type inequalities and semiclassical Weyl's laws for critical Schr\"odinger operators associated with  matrix-valued $\LlogL$-Orlicz potentials. Finally, we explain how the Weyl's laws of this paper imply a strong version of Connes' integration formula for matrix-valued $\LlogL$-Orlicz potentials. 
\end{abstract}

\maketitle 

\section{Introduction}
The study of Schr\"odinger operators $h^2\Delta+V$, and more generally fractional Schr\"odinger operators $h^{n/p}\Delta^{n/2p}+V$ on domains of $\R^n$, as well as on manifolds, is an important focus of interest in mathematical physics. Of special interest  are semiclassical Weyl's laws that describe the semiclassical behaviour  of the number of bound states, i.e., the number of negative eigenvalues $N^{-}(h^2\Delta+V)$. Thanks to the Birman-Schwinger principle this translates into determining the asymptotic distribution of the negative eigenvalues of Birman-Schwinger operators $\Delta^{-n/2p}V\Delta^{-n/2p}$. 
In the 60s and 70s Weyl's laws for positive and negative eigenvalues of Birman-Schwinger operators and semiclassical Weyl's laws for the corresponding Schr\"odinger operators were obtained on $\R^n$ and bounded domains of $\R^n$ for $p<1$ with $V\in L_1$, and for $p>1$ with $V\in L_p$ (see~\cite{BS:JFAA70, Ro:SMD72, Ro:SM76}).  In particular, we have semiclassical Weyl's laws for Schr\"odinger operators $\Delta+V$ with $V\in L_{n/2}$ in any dimension~$n\geq 3$ (see also~\cite{Cw:AM77, Li:BAMS76, Si:TAMS76}). 

The critical case $p=1$, including Schr\"odinger operators in 2D, is the most challenging case. Significant progress on this case was made in \emph{even} dimension by Solomyak~\cite{So:IJM94, So:PLMS95} in the mid-90s.  He obtained eigenvalues estimates and Weyl's laws for operators $\Delta^{-n/4}V\Delta^{-n/4}$ for $\LlogL$-Orlicz potentials on bounded domains of $\R^n$. This yields semiclassical Weyl's laws for critical Schr\"odinger operators $h^2\Delta^{n/2}+V$ in this setting. On $\R^{n}$, once again with $n$ even, Solomyak further obtained eigenvalues inequalities for operators $(\Delta^{-n/4} +\mu^2)V(\Delta^{-n/4} +\mu^2)$, $\mu>0$,  where $V$ is a locally $\LlogL$-Orlicz. Thanks to the Birman-Schwinger principle this provides bounds on the number of bound states~$<-\mu^2$ of the critical Schr\"odinger operator $\Delta^{n/2} +V$. There are also results for $\mu=0$ by suitably restricting the domain of $\Delta$ (see~\cite{So:PLMS95}). 

Solomyak's results triggered an intensive activity on eigenvalues estimates for Schr\"odinger operators in 2D (see, e.g., \cite{BL:CPAM96, CKMW:JMP03, GN:ARMA15, Ka:JMA17, KS:JMP20, KMW:FBS02, LS:JST13, MS:JMS12, St:PAMS04}). We also refer to~\cite{BLS:AFM97} for further results on critical Schr\"odinger operators in even dimension~$\geq 2$. However, the odd dimensional case remained merely unattended until recent articles of Rozenblum-Shargorodsky~\cite{RS:EMS21} and Rozenblum~\cite{Ro:arXiv21}. In particular, by using the eigenvalue inequalities of~\cite{RS:EMS21}, Rozenblum~\cite{Ro:arXiv21} established Weyl's laws in \emph{any} dimension $n\geq 2$ for operators $P^*uP$, where $P$ is a \psido\ of order $-n/2$ on a closed manifold $M$ and $u$ is a potential of the form $u=f\mu$, where $\mu$ is an Alfhors-regular measure supported on a regular submanifold $\Sigma \subset M$ and $f$ is a real-valued $\LlogL$-Orlicz function on $\Sigma$ with respect to $\mu$ (see also~\cite{RT:arXiv21} for the non-critical case).  The main results of~\cite{Ro:arXiv21} further include an extension of Connes' trace theorem for these operators. This shows that even singular measures can be recovered from Connes' integral. This also exhibits an interesting link between semiclassical analysis and Connes' noncommutative geometry.  

Meanwhile, Sukochev-Zanin~\cite{SZ:arXiv20} obtained Cwikel-type estimates on any torus $\T^n$, $n\geq 2$, for operators of the form $(1+\Delta)^{-n/4}f(1+\Delta)^{-n/4}$, where $f$ is an $\LlogL$-Orlicz function on $\T^n$. One goal of this paper is to show that, in the case of smooth measures, the results of Rozenblum~\cite{Ro:arXiv21} are direct consequences of the Cwikel-type estimates of~\cite{SZ:arXiv20}. Although, we don't obtain the results at the level of generality as in~\cite{Ro:arXiv21}, for smooth measures this still allows us to deal with the odd dimensional case, which had been out of focus until recently. Furthermore, our approach simplifies and supersedes the main results of~\cite{SZ:arXiv21}. 

Throughout this paper we work with a closed Riemannian manifold $(M^n,g)$ and a Hermitian vector bundle $E$ over $M$. Thus, the results of this paper are established for matrix-valued potentials, i.e., $\LlogL$-Orlicz sections of $\End(E)$. The main ingredient is a Cwikel-type estimate for operators of the form $QuP$, where $P$ and $Q$ are operators in $\Psi^{-n/2}(M,E)$ and $u$ is a matrix-valued $\LlogL$-Orlicz potential. In this case, the operator $QuP$ is in the weak trace class $\sL_{1,\infty}$, and we have
\begin{equation}
 \big\|QuP\big\|_{1,\infty} \leq C_{PQ}\|u\|_{\LlogL},
 \label{eq:Intro.CwikelPQ}
\end{equation}
where the constant $C_{PQ}$ depends only on $P$ and $Q$ (see Proposition~\ref{prop:Orlicz.Cwikel-ME-PsiDOs}). This extends the ``specific'' Cwikel-type estimates on tori of~\cite{SZ:arXiv20}. In fact, thanks to the (pseudo-)local nature of these estimates it is a routine argument to get the former from the latter (compare~\cite{SZ:arXiv21}). 

Once the Cwikel-type estimates~(\ref{eq:Intro.CwikelPQ}) are  established, there is no difficulty to get Weyl laws for operators $QuP$ as above by combining these estimates with the perturbation theory of Birman-Solomyak~\cite{BS:JFAA70} and the Weyl's laws for negative order \psidos\ of Birman-Solomyak~\cite{BS:VLU77, BS:VLU79, BS:SMJ79} (see Theorem~\ref{thm:Orlicz.WeylQuP} for the precise statement). In particular, if $P$ and $Q$ are operators in $\Psi^{-n/2}(M,E)$ and $u$ is a matrix-valued  $\LlogL$-Orlicz potential, then
\begin{equation}
   \lim_{j\rightarrow \infty} j\mu_j\left(QuP\right)=  \frac1{n} (2\pi)^{-n} \!  \int_{S^*M} \tr_E\big[ \left|\sigma(Q)(x,\xi)u(x) \sigma(P)(x,\xi)\right| \big] dxd\xi, 
   \label{eq:Intro.Weyl-QuP} 
\end{equation}
where $\mu_0(QuP)\geq \mu_1(QuP)\geq \cdots$ are the singular values of $QuP$, and $\sigma(P)$ (resp., $\sigma(Q)$) is the principal symbol of $P$ (resp., $Q$). 
If $Q=P^*$ and $u(x)^*=u(x)$, then the operator $P^*uP$ is selfadjoint and satisfies the Weyl's law, 
\begin{equation}
   \lim_{j\rightarrow \infty} j\lambda^\pm_j\left(P^*uP\right)=  \frac1{n} (2\pi)^{-n} \!  \int_{S^*M} \tr_E\left[ \big( \sigma(P)(x,\xi)^*u(x) \sigma(P)(x,\xi)\big)_\pm \right] dxd\xi, 
   \label{eq:Intro.Weyl-PuP} 
\end{equation}
 where $\lambda_0^\pm(P^*uP)\geq  \lambda_1^\pm(P^*uP)\geq \cdots \geq 0$ are the positive/negative eigenvalues of $P^*uP$ and the subscript $\pm$ refers to the positive/negative parts in $\End(E_x)$. If $\Delta_E=\nabla^*\nabla$ is the Laplacian of some Hermitian connection on $E$ and we take $P=Q=\Delta_E^{-n/4}$, then we get
\begin{gather}
\lim_{j\rightarrow \infty} j\mu_j\left(\Delta_E^{-\frac{n}{4}}u\Delta_E^{-\frac{n}{4}}\right)= \frac1{n} (2\pi)^{-n} \! \int_M  \tr_E\big[|u(x)|\big] \sqrt{g(x)}dx,
\label{eq:Intro.Weyl-|DuD|} \\ 
\lim_{j\rightarrow \infty} j\lambda^\pm_j\left(\Delta_E^{-\frac{n}{4}}u\Delta_E^{-\frac{n}{4}}\right)= \frac1{n} (2\pi)^{-n} \! \int_M  \tr_E\big[u(x)_\pm\big] \sqrt{g(x)}dx \qquad  \text{if $u(x)^*=u(x)$}.
\label{eq:Intro.Weyl-DuD} 
\end{gather}

In the form stated above the Weyl's law~(\ref{eq:Intro.Weyl-QuP}) is new. As mentioned above, in the scalar case Rozenblum~\cite{Ro:arXiv21} estasblished the Weyl's law~(\ref{eq:Intro.Weyl-PuP}) for a larger class of potentials. 
Still in the scalar case, Sukochev-Zanin~\cite{SZ:arXiv21} obtained the estimates~(\ref{eq:Intro.CwikelPQ}) and the Weyl's law~(\ref{eq:Intro.Weyl-PuP}) in the special case $P=Q=(1+\Delta_g)^{-n/4}$. 
The  approach in~\cite{SZ:arXiv21} uses the Cwikel-type estimates of~\cite{SZ:arXiv20}, but it also relies on some deep results on commutators in weak Schatten classes from~\cite{DFWW:AIM04, HSZ:Preprint}. Once again, our prime goal here is explaining that, for our class of $\LlogL$-Orlicz potentials, the Cwikel-type estimates~(\ref{eq:Intro.CwikelPQ}) and all the Weyl's laws~(\ref{eq:Intro.Weyl-QuP})--(\ref{eq:Intro.Weyl-DuD}) are direct consequences of the Cwikel-type estimates of~\cite{SZ:arXiv20}. We also stress that the Weyl's laws~(\ref{eq:Intro.Weyl-|DuD|})--(\ref{eq:Intro.Weyl-DuD}) hold \emph{verbatim} if we replace $\Delta_E$ by any Laplace-type operator. In particular, they hold for squares of Dirac-type operators (see Proposition~\ref{prop:Orlicz.Dirac}). 
 
We also give applications of the above to the semiclassical analysis of critical Schr\"odinger operators $\Delta_E^{n/2}+V$, where $V$ is a (Hermitian) matrix-valued $\LlogL$-Orlicz potential. The Cwikel estimate~(\ref{eq:Intro.CwikelPQ}) for $Q=P=\Delta_E^{-n/2}$ ensures that $V$ is $\Delta_E^{n/2}$-form compact, and so the operator $\Delta_E^{n/2}+V$ makes sense as a form sum. It is selfadjoint and bounded from below and has pure discrete spectrum. As above we denote by $N^{-}( \Delta_E^{n/2}+V)$ its number of negative eigenvalues.

 By a standard application of the abstract Birman-Schwinger principle mentioned above, the Weyl's law~(\ref{eq:Intro.Weyl-DuD}) implies the following semiclassical Weyl's law (\emph{cf}.~Corollary~\ref{cor:SC.SC-Weyl}), 
\begin{equation}
 \lim_{h \rightarrow 0^+} h^nN^{-}\big(h^n \Delta_E^{\frac{n}{2}}+V\big)= \frac1{n} (2\pi)^{-n} \! \int_M  \tr_E\big[V(x)_{-}\big] \sqrt{g(x)}dx.
 \label{eq:Intro.SC-Weyl} 
\end{equation}
Semiclassical Weyl's laws are not considered in~\cite{Ro:arXiv21}. However, the same argument as above shows that the Weyl's laws of~\cite{Ro:arXiv21} similarly yields a semiclassical Weyl's law for the type of potentials considered in~\cite{Ro:arXiv21}.

Another application concerns a version of the Cwikel-Lieb-Rozenblum (CLR) inequality for $\LlogL$-Orlicz potentials (see Section~\ref{sec:SC} for background on the CLR inequality). By combining the Cwikel-type estimates~(\ref{eq:Intro.CwikelPQ}) for $P=Q=\Delta_E^{-n/4}$ with the borderline Birman-Schwinger principle of~\cite{MP:Part1} we immediately get the following CLR-type inequality, 
\begin{equation}
 N^{-}\big( \Delta_E^{\frac{n}{2}}+V\big) - N^+(\Pi_0V_{-}\Pi_0) \leq C_{\nabla}\|V_{-}\|_{\LlogL},
 \label{eq:Intro.CLR}   
\end{equation}
 where the constant $C_\nabla$ does not depend on $V$  (see Corollary~\ref{cor:SC.CLR}). Here $\Pi_0$ is the orthogonal projection onto $\ker \Delta_E$ and $N^+(\Pi_0V_{-}\Pi_0)$ is the number of positive eigenvalues of $\Pi_0V_{-}\Pi_0$. In particular, we obtain a version of the CLR inequality for Schr\"odinger operators $\Delta_E+V$ on 2-dimensional closed manifolds. Note that the original CLR inequality does not hold on $\R^2$. 
 
We also clarify the links between the Weyl's laws~(\ref{eq:Intro.Weyl-QuP})--(\ref{eq:Intro.Weyl-DuD}) and Connes' integration. In the framework of Connes' noncommutative geometry~\cite{Co:NCG} the role of the integral is played by positive traces on the weak trace class $\sL_{1,\infty}$ (see also Section~\ref{sec:NCG} and the references therein). We define the NC integral of an operator $A\in \sL_{1,\infty}$ by
\begin{equation}
 \bint A:= \lim_{N\rightarrow \infty} \frac{1}{\log N} \sum_{j<N} \lambda_j(A) \quad \text{provided the limit exists}, 
 \label{eq:Intro.NC-Integral} 
\end{equation}
where $(\lambda_j(A))_{j\geq 0}$ is any eigenvalue sequence for $A$ such that $|\lambda_0(A)|\geq |\lambda_1(A)|\geq \cdots$. An operator for which the above limit exists is called \emph{measurable}. The measurable operators form a closed subspace of $\sL_{1,\infty}$ which contains the commutator subspace $\Com(\sL_{1,\infty})$ on which the NC integral is a positive linear trace. 

Measurability of weak trace-class operators is equivalently described in terms of Dixmier traces (see~\cite{Di:CRAS66, Co:NCG, LSZ:Book, Po:NCIntegration}). These traces extends to the Dixmier-Macaev class $\fM_{1,\infty}$, which is strictly larger than $\sL_{1,\infty}$. There are many traces on $\sL_{1,\infty}$ that are not Dixmier or do not extend to $\fM_{1,\infty}$. We say that an operator $A\in \sL_{1,\infty}$ is \emph{strongly measurable} if, for every (normalized) positive trace $\varphi$ on $\sL_{1,\infty}$, the value $\varphi(A)$ is given by the limit in~(\ref{eq:Intro.NC-Integral}). We stress that this notion of measurability is stronger than the notion of measurability considered in~\cite{Ro:arXiv21}, which is defined in terms of traces on $\fM_{1,\infty}$. Moreover, as pointed out in~\cite{Po:NCIntegration}, Weyl's laws for operators in $\sL_{1,\infty}$ imply strong measurability in the sense meant in this paper. 

Let $(M^n,g)$ be a closed Riemannian manifold and $E$ a Hermitian vector bundle over $M$. By Connes's trace theorem~\cite{Co:CMP88, KLPS:AIM13} any operator $P\in \Psi^{-n}(M,E)$ is strongly measurable, and we have 
\begin{equation*}
 \bint P = \frac1{n} \int_{S^*M} \tr_E\big[\sigma(P)(x,\xi)\big]dxd\xi. 
\end{equation*}
In particular, for $P=\Delta_E^{-n/4}f\Delta_E^{-n/4}$ with $f\in C^\infty(M)$ we get Connes' integration formula, 
\begin{equation}
 \bint \Delta_E^{-\frac{n}{4}}u\Delta_E^{-\frac{n}{4}} = \frac1{n} (2\pi)^{-n} \! \int_M  \tr_E\big[u(x)\big] \sqrt{g(x)}dx.
 \label{eq:Intro.Connes-integration} 
\end{equation}
This shows that Connes' integral recaptures the Riemannian volume measure. 

The Weyl's laws~(\ref{eq:Intro.Weyl-QuP})--(\ref{eq:Intro.Weyl-PuP}) and the observations of~\cite{Po:NCIntegration} immediately imply the following extension of Connes' trace theorem (see Theorem~\ref{thm:Orlicz-trace-formula}). If  $P$ and $Q$ are operators in $\Psi^{-n/2}(M,E)$ and $u$ is a matrix-valued  potential $\LlogL$-Orlicz potential, then the operators $QuP$ and $|QuP|$ are both strongly measurable, and we have 
\begin{gather}
 \bint QuP = \frac1{n} (2\pi)^{-n} \!  \int_{S^*M} \tr_E \big[ \sigma(Q)(x,\xi)u(x) \sigma(P)(x,\xi)\big]dxd\xi,
 \label{eq:Intro.trace-formula-QuP}\\
\bint \big|QuP\big| =
 \frac1{n} (2\pi)^{-n} \!  \int_{S^*M} \tr_E \big[\left|\sigma(Q)(x,\xi)u(x) \sigma(P)(x,\xi)\right|\big]dxd\xi.
 \label{eq:Intro.trace-formula-|QuP|} 
\end{gather}
In particular, Connes' integration formula~(\ref{eq:Intro.Connes-integration}) holds \emph{verbatim} for matrix-valued $\LlogL$-potentials. This also leads to semiclassical interpretation of Connes' integration formula (see Remark~\ref{rmk:Int-Orlicz.SC-Connes}). 

Note that~(\ref{eq:Intro.trace-formula-|QuP|}) yields an integration formula for the absolute value $|\Delta_E^{-\frac{n}{4}}u\Delta_E^{-\frac{n}{4}}|$ as well. Namely, if $u$ is any matrix-valued $\LlogL$-Orlicz potential, then 
\begin{equation}
 \bint \bigg|\Delta_E^{-\frac{n}{4}}u\Delta_E^{-\frac{n}{4}}\bigg| = \frac1{n} (2\pi)^{-n} \! \int_M  \tr_E\big[|u(x)|\big] \sqrt{g(x)}dx.
 \label{eq:Intro.Connes-integration-|DuD|} 
\end{equation}
We also have versions of the trace formulas~(\ref{eq:Intro.Connes-integration}) and~(\ref{eq:Intro.Connes-integration-|DuD|}) for (squares of) Dirac-type operators and matrix-valued $\LlogL$-Orlicz potentials (see Proposition~\ref{prop:Orlicz.Dirac}). 

The trace formula~(\ref{eq:Intro.trace-formula-|QuP|}) is new. In the scalar case Rozenblum~\cite{Ro:arXiv21} established the trace formula~(\ref{eq:Intro.trace-formula-QuP}) for a larger class of potentials. As mentioned above, a weaker notion of measurability is used in~\cite{Ro:arXiv21}. However, the same arguments as above show that the operators under consideration in~\cite{Ro:arXiv21} are strongly measurable in the sense used in this paper. 
Still in the scalar case, a weaker version of the trace formula~(\ref{eq:Intro.trace-formula-QuP}) in the special case $P=Q=(1+\Delta)^{-n/4}$ and $u\in L_\infty(M)$,  is given in~\cite{SZ:arXiv21}.

The remainder of this paper is organized as follows. In Section~\ref{sec:NCInt}, we review some facts regarding weak Schatten classes and Birman-Solomyak's results on Weyl's laws for compact operators. In Section~\ref{sec.Orlicz}, we establish the Cwikel-type estimates~(\ref{eq:Intro.CwikelPQ}). 
In Section~\ref{sec:Weyl-Orlicz}, we derive the Weyl's laws~(\ref{eq:Intro.Weyl-QuP})--(\ref{eq:Intro.Weyl-DuD}) for matrix-valued $\LlogL$-Orlicz potentials. We also establish the semiclassical Weyl's law~(\ref{eq:Intro.SC-Weyl}) and the CLR-type inequality~(\ref{eq:Intro.CLR}). In Section~\ref{sec:NCG}, we review the main facts regarding Connes' integration and its relationship with Weyl's laws. Finally, in Section~\ref{sec:NCInt-Orlicz}, we deal with the integration formulas~(\ref{eq:Intro.trace-formula-QuP})--(\ref{eq:Intro.trace-formula-|QuP|}) and their consequences.

\subsection*{Aknowledgements} I wish to thank Grigori Rozenblum, Edward McDonald, and Fedor Sukochev for various stimulating discussions related to the subject matter of this article. I also thank the University of Ottawa for its hospitality during the whole preparation of this paper.

\section{Weak Schatten Classes and Weyl's Laws for Compact Operators}\label{sec:NCInt} 
In this section, we recall the main facts regarding weak Schatten classes and Birman-Solomyak's results on Weyl's laws for compact operators. 

Throughout this section we let $\sH$ be a (separable) Hilbert space with inner product $\scal{\cdot}{\cdot}$. We denote by $\sL(\sH)$ the $C^*$-algebra of bounded operators on $\sH$ with norm $\|\cdot\|$. 

\subsection{Weak Schatten Classes} \label{subsec:schatten}
We briefly review the main definitions and properties regarding weak Schatten classes on $\sH$. We refer to~\cite{Si:AMS05, GK:AMS69} for further details. 

Let $\sK$ be (closed) the ideal of compact operators on $\sH$. Given any operator $T\in \sK$ we denote by $(\mu_j(T))_{j\geq 0}$ its sequence of \emph{singular values}, i.e., $\mu_j(T)$ is the $(j+1)$-th eigenvalue counted with multiplicity of the absolute value $|T|=\sqrt{T^*T}$. The \emph{min-max principle} states that
\begin{align}
 \mu_j(T)&=\min \left\{\|T_{|E^\perp}\|;\ \dim E=j\right\}.
 \label{eq:min-max} 
\end{align}
The min-max principle implies the following properties (see, e.g., \cite{GK:AMS69, Si:AMS05}), 
\begin{gather}
 \mu_j(T)=\mu_j(T^*)=\mu_j(|T|),
 \label{eq:Quantized.properties-mun1}\\
 \mu_{j+k}(S+T)\leq \mu_j(S) + \mu_k(T),
 \label{eq:Quantized.properties-mun2}\\
 \mu_j(ATB)\leq \|A\| \mu_j(T) \|B\|, \qquad A, B\in \sL(\sH). 
 \label{eq:Quantized.properties-mun3} 
\end{gather}
The inequality~(\ref{eq:Quantized.properties-mun2}) is known as Ky Fan's inequality~\cite{Fa:PNAS51}. 

For $p\in (0,\infty)$, the weak Schatten class  $\sL_{p,\infty}$ is defined by
\begin{equation*}
 \sL_{p,\infty}:=\left\{T\in \sK; \ \mu_j(T)=\op{O}\big(j^{-\frac1p}\big)\right\}. 
\end{equation*}
 This is a two-sided ideal and a quasi-Banach ideal with respect to the quasi-norm,
\begin{equation}\label{def:lp_infty_quasinorm}
 \|T\|_{p,\infty}:=\sup_{j\geq 0}\;(j+1)^{\frac{1}{p}}\mu_j(T), \qquad T\in \sL_{p,\infty}. 
\end{equation}
For $p> 1$, the quasi-norm is equivalent to the norm,
\begin{equation*}
 \|T\|_{p,\infty}':=  \sup_{N\geq 1} N^{-1+\frac1{p}}\sum_{j<N} \mu_j(T), \qquad T\in \sL_{p,\infty}.
\end{equation*}
Thus, in this case $\sL_{p,\infty}$ is a Banach ideal with respect to that equivalent norm. 

In addition, we denote by $(\sL_{p,\infty})_{0}$ the closure in $\sL_{p,\infty}$ of the finite-rank operators. We have
\begin{equation*}
 \big(\sL_{p,\infty}\big)_{0}=\left\{T\in \sK; \ \mu_j(T)=\op{o}\big(j^{-\frac1p}\big)\right\}.
\end{equation*}
We note the continuous inclusions, 
\begin{equation*}
 \sL_p \subsetneq  \big(\sL_{p,\infty}\big)_{0}\subsetneq \sL_{p,\infty}  \subsetneq \sL_{q}, \qquad 0<p<q.
\end{equation*}

Let $\sH'$ be another Hilbert space, and let $\iota: \sH'\rightarrow \sH$ be a Hilbert space embedding, i.e., a continuous linear map which is one-to-one and has closed range. For instance, we may take $\iota$ to be an isometry. Set $ \sH_1=\ran \iota$; by assumption this is a closed subspace of $\sH$. We get a continuous linear isomorphism  $\iota:\sH'\rightarrow \sH_1$ with inverse   $\iota^{-1}:\sH_1\rightarrow \sH'$. Thus, if $\pi:\sH\rightarrow \sH_1$ is the orthogonal projection onto $\sH_1$, then $\iota^{-1}\circ \pi$ is a left-inverse of $\iota$. 
The pushforward $\iota_*:\sL(\sH')\rightarrow \sL(\sH)$ is then defined by
\begin{equation}
 \iota_*A= \iota \circ A \circ (\iota^{-1}\circ \pi), \qquad A \in \sL(\sH'). 
 \label{eq:App.embedding-sLsH}
\end{equation}
This is an embedding of (unital) Banach algebras.  If $\iota$ is an isometry, then $\iota_*:\sL(\sH')\rightarrow \sL(\sH)$ is an isometric $*$-homomorphism. 

In what follows, for $p>0$, we denote by $\sL_{p,\infty}(\sH)$ and $\sL_{p,\infty}(\sH')$ the corresponding weak Schatten classes on $\sH$ and $\sH'$, respectively. 

\begin{proposition}[see, e.g., {\cite[Appendix]{Po:NCIntegration}}]\label{prop:App.eigenvalues} The following holds. 
\begin{enumerate}
 \item If $A$ is a compact operator on $\sH'$, then $A$ and $\iota_*A$ have the same non-zero eigenvalues with the same algebraic multiplicities.
 
 \item There is $c>0$ such that, for every compact operator $A$ on $\sH'$, we have 
 \begin{equation*}
 c^{-1} \mu_j(A)\leq \mu_j\big(\iota_*A) \leq c \mu_j(A) \qquad \forall j\geq 0. 
\end{equation*}
We may take $c=1$ when $\iota$ is an isometric embedding. 

\item  The pushforward map~(\ref{eq:App.embedding-sLsH}) induces a quasi-Banach embedding $ \iota_*: \sL_{p,\infty}(\sH')\rightarrow \sL_{p,\infty}(\sH)$ for each $p>0$. This is an isometric embedding when $\iota$ is an isometry. 
\end{enumerate}
\end{proposition}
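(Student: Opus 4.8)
The plan is to reduce all three assertions to two elementary algebraic identities. Write $\sH_1=\ran\iota$; then $\iota\colon\sH'\to\sH_1$ is a bounded linear isomorphism with bounded inverse by the open mapping theorem, and since the orthogonal projection $\pi$ restricts to the identity on $\sH_1$ we have $\pi\circ\iota=\iota$. Two consequences of this and of the definition~(\ref{eq:App.embedding-sLsH}): first, $\iota_*A$ maps $\sH$ into $\sH_1$ and vanishes on $\sH_1^\perp$, so $\sH_1$ is invariant under $\iota_*A$ and $\iota_*A|_{\sH_1}=\iota\circ A\circ\iota^{-1}$; second, $\iota_*A\circ\iota=\iota\circ A$, whence $A=(\iota^{-1}\circ\pi)\circ(\iota_*A)\circ\iota$, the operator $\iota^{-1}\circ\pi$ being the left-inverse of $\iota$ discussed before the statement. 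In particular $\iota_*A$ is compact whenever $A$ is, so every quantity occurring in the proposition makes sense.

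For part~(1), I would argue as follows. Since $\ran(\iota_*A)\subseteq\sH_1$, the operator $\iota_*A$ induces the zero map on the quotient $\sH/\sH_1$, hence for $\lambda\neq0$ the operator $\iota_*A-\lambda$ induces multiplication by $-\lambda$ there; consequently $\ker(\iota_*A-\lambda)^k\subseteq\sH_1$ for every $k\geq1$, so the algebraic eigenspace of $\iota_*A$ at $\lambda$ coincides with that of $\iota_*A|_{\sH_1}=\iota A\iota^{-1}$, which in turn is the image under $\iota$ of the algebraic eigenspace of $A$ at $\lambda$. By Riesz theory for compact operators, the non-zero spectrum consists of eigenvalues, and this gives the equality of the non-zero eigenvalues with their algebraic multiplicities. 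I expect this bookkeeping to be the only genuinely delicate point: one must verify that passing to the invariant subspace $\sH_1$ loses no generalized eigenvectors at a nonzero eigenvalue, and that the (non-normal) similarity $A\mapsto\iota A\iota^{-1}$ preserves algebraic multiplicities; everything else is a mechanical use of the min-max properties~(\ref{eq:Quantized.properties-mun1})--(\ref{eq:Quantized.properties-mun3}).

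For part~(2), I would apply the ideal property~(\ref{eq:Quantized.properties-mun3})---in its evident form for operators between possibly distinct Hilbert spaces, which follows directly from the min-max principle~(\ref{eq:min-max})---to the two factorizations above. From $\iota_*A=\iota\circ A\circ(\iota^{-1}\circ\pi)$ and $\|\pi\|\leq1$ one gets $\mu_j(\iota_*A)\leq\|\iota\|\,\|\iota^{-1}\|\,\mu_j(A)$, and from $A=(\iota^{-1}\circ\pi)\circ(\iota_*A)\circ\iota$ one gets $\mu_j(A)\leq\|\iota\|\,\|\iota^{-1}\|\,\mu_j(\iota_*A)$, for all $j\geq0$; this is the claim with $c=\|\iota\|\,\|\iota^{-1}\|$, the extra vanishing singular values (present when $\dim\sH'<\dim\sH$) simply matching on both sides. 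If $\iota$ is an isometry, then $\iota\colon\sH'\to\sH_1$ is unitary, so $\|\iota\|=\|\iota^{-1}\|=1$ and $\mu_j(\iota_*A)=\mu_j(A)$; equivalently $\iota^{-1}\circ\pi=\iota^*$ and $|\iota_*A|=\iota|A|\iota^*$, the latter because $\iota_*$ is then an isometric $*$-homomorphism. Finally, part~(3) is immediate from part~(2) and the definition~(\ref{def:lp_infty_quasinorm}) of $\|\cdot\|_{p,\infty}$: the map $\iota_*$ is linear and injective (the assignment $B\mapsto(\iota^{-1}\circ\pi)\circ B\circ\iota$ is a left inverse), it carries $\sL_{p,\infty}(\sH')$ into $\sL_{p,\infty}(\sH)$, and $c^{-1}\|A\|_{p,\infty}\leq\|\iota_*A\|_{p,\infty}\leq c\|A\|_{p,\infty}$, which is exactly what it means for $\iota_*$ to be a quasi-Banach embedding---isometric when $\iota$ is an isometry.
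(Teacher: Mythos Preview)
The paper does not supply its own proof of this proposition; it simply cites \cite[Appendix]{Po:NCIntegration} and uses the result. Your argument is correct and is exactly the standard one: reduce to the restriction $\iota_*A|_{\sH_1}=\iota A\iota^{-1}$ to handle nonzero eigenvalues via similarity, and use the two factorizations $\iota_*A=\iota\circ A\circ(\iota^{-1}\circ\pi)$ and $A=(\iota^{-1}\circ\pi)\circ(\iota_*A)\circ\iota$ together with the ideal property~(\ref{eq:Quantized.properties-mun3}) for the singular-value bounds, which in turn give part~(3) directly from the definition of $\|\cdot\|_{p,\infty}$.
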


\subsection{Weyl operators} 
In what follows, given any selfadjoint operator $A\in \sK$, we denote by $(\pm\lambda^\pm(A))_{j\geq 0}$ its sequences of positive and negative eigenvalues, so that
\begin{equation*}
  \lambda^{\pm}_0(A)\geq  \lambda^{\pm}_1(A) \geq \cdots \geq 0,
\end{equation*}
 where each eigenvalue is repeated according to multiplicity.  More precisely, $\lambda_j^\pm(A)=\mu_j(A^{\pm})$, where $A^\pm=\frac12(|A|\pm A)$ are the positive and negative parts of $A$. 
 
 We refer to~\cite[\S9.2]{BS:Book} for the main properties of the positive/negative eigenvalue sequences of selfadjoint compact operators. In particular, we have the following min-max principle (\emph{cf}.~\cite[Theorem 9.2.4]{BS:Book}), 
\begin{equation*}
 \lambda^\pm_j(A)= \min \bigg\{ \max_{0\neq \xi\in E^\perp} \pm \frac{\scal{A\xi}{\xi}}{\scal\xi\xi}; \  \dim E=j\bigg\}.
\end{equation*}
This implies the following version of Ky Fan's inequality (\emph{cf}.~\cite[Theorem 9.2.8]{BS:Book}), 
\begin{equation}
 \lambda^\pm_{j+k}(A+B)\leq \lambda^\pm_j(A) + \lambda^\pm_k(B), \qquad j,k\geq 0. 
 \label{eq:Bir-Sol.Weyl-ineq-lambdapm}
\end{equation}

\begin{definition}
We say that $A\in \sL_{p,\infty}$, $p>0$ is a \emph{Weyl operator} if one of the following conditions applies:
\begin{enumerate}
 \item[(i)] $A\geq 0$ and $\lim j^{1/p}\lambda_j(A)$ exists. 
 
 \item[(ii)] $A^*=A$ and $\lim j^{1/p}\lambda_j^+(A)$ and $\lim j^{1/p}\lambda_j^-(A)$ both exist. 
 
 \item[(iii)] The real part $\Re A=\frac12(A+A^*)$ and the imaginary part $\Im A= \frac1{2i}(A-A^*)$ of are both Weyl operators in the sense of (ii). 
\end{enumerate}
\end{definition}

We denote by $\sW_{p,\infty}$ the class of Weyl operators in $\sL_{p,\infty}$.  If $A\in \sW_{p,\infty}$, $A\geq 0$, we set 
\begin{equation*}
 \Lambda(A)=\lim_{j\rightarrow \infty} j^{\frac1p} \lambda_j(A). 
\end{equation*}
If $A=A^*\in \sW_{p,\infty}$, we set 
\begin{equation*}
  \Lambda^\pm(A)=\lim_{j\rightarrow \infty} j^{\frac1p} \lambda^\pm_j(A). 
\end{equation*}
Finally, for a general $A\in \sW_{p,\infty}$, we define 
\begin{equation*}
  \Lambda^\pm(A)=  \Lambda^\pm\big(\Re A\big) + i \Lambda^\pm\big(\Im A\big). 
\end{equation*}

\begin{remark}\label{rmk:Bir-Sol.counting}
 Given any selfadjoint compact operator $A$ on $\sH$, its counting functions are given by
\begin{equation*}
 N^\pm(A;\lambda):=\# \big\{j; \ \lambda_j^\pm(A)>\lambda\big\}, \qquad \lambda>0. 
\end{equation*}
If $A\in \sW_{p,\infty}$, $p>0$, then (see, e.g., \cite[Proposition~13.1]{Sh:Springer01}), we have
\begin{equation}
\lim_{\lambda \rightarrow 0^+} \lambda^pN^\pm(A;\lambda)= \lim_{j\rightarrow \infty} j \lambda_j^\pm(A)^p= \Lambda^\pm(A)^p. 
\label{eq:Bir-Sol.counting-Lambda}
\end{equation}
\end{remark}

It is also convenient to introduce the following class of operators. 

\begin{definition}
 $\sW_{|p,\infty|}$, $p>0$, consists of operators $A\in \sL_{p,\infty}$ such that $|A|\in \sW_{p,\infty}$, i.e., $\lim j^{1/p}\mu_j(A)$ exists. 
\end{definition}

In particular, if $A\in \sW_{|p,\infty|}$, then 
\begin{equation*}
 \Lambda\big(|A|\big)= \lim_{j\rightarrow \infty} j^{\frac1p}\mu_j(A). 
\end{equation*}

The perturbation theory of Birman-Solomyak~\cite[Theorem~4.1 \& Remark 4.2]{BS:JFAA70} implies the following results (see also~\cite{Po:NCIntegration}). 

\begin{proposition}\label{prop:Bir-Sol.closedness} 
The following holds. 
\begin{enumerate}
 \item $\sW_{p,\infty}$ is a closed subset of $\sL_{p,\infty}$ on which $\Lambda^\pm:\sW_{p,\infty}\rightarrow \C$ are continuous maps. 
 
 \item Let $A\in \sW_{p,\infty}$ and $B\in (\sL_{p,\infty})_0$. Then $A+B\in \sW_{p,\infty}$, and we have 
\begin{equation*}
  \Lambda^\pm(A+B)=\Lambda^\pm(A). 
\end{equation*}
\end{enumerate}
\end{proposition}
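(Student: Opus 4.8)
The plan is to deduce Proposition~\ref{prop:Bir-Sol.closedness} directly from the Birman–Solomyak perturbation results quoted just before it, after reducing everything to statements about positive/negative eigenvalue sequences. The key observation is that each of the three parts of the definition of Weyl operator, as well as the functionals $\Lambda^\pm$, is built out of the asymptotics of the sequences $j^{1/p}\lambda_j^\pm(\cdot)$ applied to the real and imaginary parts, so it suffices to treat the selfadjoint case and then assemble the general case coordinatewise.

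First I would record the two elementary inputs. The map $A\mapsto \Re A$ (and $A\mapsto \Im A$) is bounded on $\sL_{p,\infty}$ with respect to the quasi-norm, since $\Re A = \tfrac12(A+A^*)$, $\|A^*\|_{p,\infty}=\|A\|_{p,\infty}$ by~(\ref{eq:Quantized.properties-mun1}), and $\sL_{p,\infty}$ is a quasi-Banach ideal; likewise these maps send $(\sL_{p,\infty})_0$ into itself. Then, for $A=A^*\in\sL_{p,\infty}$, the positive part $A^+=\tfrac12(|A|+A)$ and negative part $A^-=\tfrac12(|A|-A)$ satisfy $\lambda_j^\pm(A)=\mu_j(A^\pm)$, and the maps $A\mapsto A^\pm$ are continuous on the selfadjoint part of $\sL_{p,\infty}$: from Ky Fan's inequality~(\ref{eq:Bir-Sol.Weyl-ineq-lambdapm}) applied to $A$ and $A-B$ one gets $|\lambda_j^\pm(A)-\lambda_j^\pm(B)|\le \mu_0(A-B)$ (the $k=0$ case), which upgrades to $\|A^\pm-B^\pm\|_{p,\infty}\le C\|A-B\|_{p,\infty}$ by the standard argument splitting the supremum over $j$ small and $j$ large; similarly $(\sL_{p,\infty})_0$ is stable under $A\mapsto A^\pm$.

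With these reductions in hand, the selfadjoint case of both (1) and (2) is exactly the Birman–Solomyak statement: if $A=A^*\in\sL_{p,\infty}$ has $\lim j^{1/p}\mu_j(A^\pm)$ existing and $B=B^*\in(\sL_{p,\infty})_0$, then $A^\pm+(\text{something in }(\sL_{p,\infty})_0)$ has a limiting eigenvalue asymptotics equal to that of $A^\pm$ — here one uses $(A+B)^\pm$ versus $A^\pm$ and notes $(A+B)^\pm-A^\pm\in(\sL_{p,\infty})_0$ by the continuity above together with the ideal property, so that \cite[Theorem~4.1 \& Remark~4.2]{BS:JFAA70} gives $\Lambda^\pm(A+B)=\Lambda^\pm(A)$. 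Closedness in (1): if $A_k=A_k^*\in\sW_{p,\infty}$ with $A_k\to A$ in $\sL_{p,\infty}$, then $A=A^*$, $A_k^\pm\to A^\pm$, and the perturbation theory provides uniform control of $|j^{1/p}\lambda_j^\pm(A_k)-j^{1/p}\lambda_j^\pm(A_\ell)|$ in terms of $\|A_k-A_\ell\|_{p,\infty}$ (this is the ``continuity of $\Lambda^\pm$'' half of the Birman–Solomyak result), so the limits $\Lambda^\pm(A_k)$ form a Cauchy sequence in $\C$ and $j^{1/p}\lambda_j^\pm(A)$ converges to its limit — hence $A\in\sW_{p,\infty}$ and $\Lambda^\pm$ is continuous. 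The case of positive $A$ in definition (i) is the special case where $A^-=0$. Finally, the general (non-selfadjoint) case follows by applying the selfadjoint case to $\Re A$ and $\Im A$ separately and using that $A\mapsto\Re A,\Im A$ are bounded linear maps preserving $(\sL_{p,\infty})_0$, so convergence $A_k\to A$ forces $\Re A_k\to\Re A$ and $\Im A_k\to\Im A$, and $B\in(\sL_{p,\infty})_0$ forces $\Re B,\Im B\in(\sL_{p,\infty})_0$; the definition $\Lambda^\pm(A)=\Lambda^\pm(\Re A)+i\Lambda^\pm(\Im A)$ then makes both conclusions immediate.

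The only real obstacle is the passage from the pointwise Ky Fan estimate $|\lambda_j^\pm(A)-\lambda_j^\pm(B)|\le\mu_0(A-B)$ to a genuine quasi-norm bound $\|A^\pm-B^\pm\|_{p,\infty}\lesssim\|A-B\|_{p,\infty}$, i.e.\ showing $A\mapsto A^\pm$ (equivalently $A\mapsto|A|$) is Lipschitz on $\sL_{p,\infty}$; this is standard but must be done carefully since $p$ can be $<1$. One handles it by interpolating: for indices $j$ beyond $\|A-B\|_{p,\infty}^{-p}/(\text{const})$ the bound $\mu_j(A-B)\le\|A-B\|_{p,\infty}(j+1)^{-1/p}$ directly controls $(j+1)^{1/p}|\lambda_j^\pm(A)-\lambda_j^\pm(B)|$, while for the finitely many earlier indices one uses $|\lambda_j^\pm(A)-\lambda_j^\pm(B)|\le\mu_0(A-B)\le\|A-B\|_{p,\infty}$ and absorbs the bounded factor $(j+1)^{1/p}$. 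Everything else is bookkeeping that follows from the ideal structure of $\sL_{p,\infty}$ and $(\sL_{p,\infty})_0$ and the quoted theorem of Birman–Solomyak.
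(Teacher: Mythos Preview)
Your detour through the operators $A^\pm$ introduces a genuine gap. The claim $\|A^\pm-B^\pm\|_{p,\infty}\le C\|A-B\|_{p,\infty}$ concerns $\sup_j (j+1)^{1/p}\mu_j(A^\pm-B^\pm)$, but the Ky Fan argument you give only controls $|\lambda_j^\pm(A)-\lambda_j^\pm(B)|=|\mu_j(A^\pm)-\mu_j(B^\pm)|$; these two quantities are unrelated in general, so you never actually bound the quasi-norm of the operator $A^+-B^+$, and hence never justify $(A+B)^\pm-A^\pm\in(\sL_{p,\infty})_0$. Lipschitz continuity of $A\mapsto|A|$ on symmetrically normed ideals is a delicate question, far from ``standard'': it holds on $\sL_p$ for $1<p<\infty$ by a deep result of Potapov--Sukochev but fails on $\sL_1$, and the sketch you give does not prove it for any $p$. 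Indeed, your interpolation in the last paragraph does not even yield a Lipschitz bound for the sequence quantity: for indices $j$ below the cutoff $j_0\sim\|A-B\|_{p,\infty}^{-p}$ the factor $(j+1)^{1/p}$ can be as large as $\|A-B\|_{p,\infty}^{-1}$, so the product is merely bounded, not of order $\|A-B\|_{p,\infty}$.

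The whole detour is unnecessary. The paper's intended route (via~\cite{BS:JFAA70}, and the remark that everything is an elementary consequence of the Ky Fan inequalities) works \emph{directly} with the sequences $(\lambda_j^\pm)$ using~(\ref{eq:Bir-Sol.Weyl-ineq-lambdapm}), without ever forming the operator difference $A^\pm-B^\pm$. For part~(2) in the selfadjoint case, split $m=j+k$ with $k=\lfloor\epsilon m\rfloor$: then
\[
m^{1/p}\lambda_m^\pm(A+B)\le (m/j)^{1/p}\,j^{1/p}\lambda_j^\pm(A)+(m/k)^{1/p}\,k^{1/p}\mu_k(B),
\]
and since $k^{1/p}\mu_k(B)\to 0$ one gets $\limsup_m m^{1/p}\lambda_m^\pm(A+B)\le(1-\epsilon)^{-1/p}\Lambda^\pm(A)$; the matching lower bound for the $\liminf$ follows by writing $A=(A+B)+(-B)$, and letting $\epsilon\to 0$ gives $\Lambda^\pm(A+B)=\Lambda^\pm(A)$. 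Part~(1) follows from the same splitting with $B=A-A_k$: one obtains both that $(\Lambda^\pm(A_k))_k$ is Cauchy and that the $\limsup$ and $\liminf$ of $m^{1/p}\lambda_m^\pm(A)$ are squeezed to its limit. Your reduction from general operators to their real and imaginary parts is correct and is all that is needed on top of this.
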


\begin{proposition}
\label{prop:Bir-Sol.closedness-sing}
The following holds. 
\begin{enumerate}
 \item $ \sW_{|p,\infty|}$ is a closed subset of $\sL_{p,\infty}$ on which the functional $A \rightarrow \Lambda(|A|)$ is continuous. 

\item  If $A\in \sW_{|p,\infty|}$ and $B\in (\sL_{p,\infty})_0$. Then $A+B\in \sW_{|p,\infty|}$, and we have 
\begin{equation*}
  \Lambda^\pm\big(|A+B|\big)=\Lambda^\pm\big(|A|\big). 
\end{equation*}
\end{enumerate}
\end{proposition}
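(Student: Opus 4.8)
The plan is to reduce the statement to Proposition~\ref{prop:Bir-Sol.closedness} by means of the standard selfadjoint dilation. Given $A\in \sL_{p,\infty}(\sH)$, set
\[
 \widetilde{A}:=\begin{pmatrix} 0 & A \\ A^* & 0 \end{pmatrix}\in \sL(\sH\oplus \sH).
\]
This operator is selfadjoint, and $A\mapsto \widetilde A$ is linear. Conjugating by the unitary $\diag(1,-1)$ sends $\widetilde A$ to $-\widetilde A$, while $\widetilde A^2=\diag(AA^*,A^*A)$; since $AA^*$ and $A^*A$ have the same nonzero eigenvalues with the same multiplicities, the nonzero spectrum of $\widetilde A$ consists, for each singular value $\mu_j(A)\neq 0$, of $+\mu_j(A)$ and $-\mu_j(A)$, each with the multiplicity of $\mu_j(A)$ as a singular value of $A$. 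Hence $\lambda_j^+(\widetilde A)=\lambda_j^-(\widetilde A)=\mu_j(A)$ for all $j$, and $\mu_{2j}(\widetilde A)=\mu_{2j+1}(\widetilde A)=\mu_j(A)$.

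From the last identities I would extract the needed dictionary. First, $\|A\|_{p,\infty}\leq \|\widetilde A\|_{p,\infty}\leq 2^{1/p}\|A\|_{p,\infty}$, so $A\mapsto \widetilde A$ is a quasi-Banach embedding of $\sL_{p,\infty}(\sH)$ into $\sL_{p,\infty}(\sH\oplus \sH)$; moreover it maps $(\sL_{p,\infty})_0$ into $(\sL_{p,\infty})_0$, because $\mu_j(A)=\op{o}(j^{-1/p})$ if and only if $\mu_j(\widetilde A)=\op{o}(j^{-1/p})$. Second, since $\widetilde A$ is selfadjoint with $\lambda_j^\pm(\widetilde A)=\mu_j(A)$, we have $A\in \sW_{|p,\infty|}$ if and only if $\widetilde A\in \sW_{p,\infty}$, and in that case $\Lambda(|A|)=\Lambda^+(\widetilde A)=\Lambda^-(\widetilde A)$.

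With this in hand both assertions follow immediately. For (1): if $A_n\to A$ in $\sL_{p,\infty}$ with each $A_n\in \sW_{|p,\infty|}$, then $\widetilde{A_n}\to \widetilde A$ in $\sL_{p,\infty}(\sH\oplus \sH)$ and each $\widetilde{A_n}$ is a selfadjoint element of $\sW_{p,\infty}$; Proposition~\ref{prop:Bir-Sol.closedness}(1) then gives $\widetilde A\in \sW_{p,\infty}$ and $\Lambda^\pm(\widetilde{A_n})\to \Lambda^\pm(\widetilde A)$, whence $A\in \sW_{|p,\infty|}$ and $\Lambda(|A_n|)\to \Lambda(|A|)$. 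For (2): if $A\in \sW_{|p,\infty|}$ and $B\in (\sL_{p,\infty})_0$, then $\widetilde{A+B}=\widetilde A+\widetilde B$ with $\widetilde A\in \sW_{p,\infty}$ and $\widetilde B\in (\sL_{p,\infty})_0$; Proposition~\ref{prop:Bir-Sol.closedness}(2) yields $\widetilde{A+B}\in \sW_{p,\infty}$ and $\Lambda^\pm(\widetilde{A+B})=\Lambda^\pm(\widetilde A)$, so $A+B\in \sW_{|p,\infty|}$ and $\Lambda(|A+B|)=\Lambda(|A|)$; since $|A+B|$ and $|A|$ are positive, their negative parts contribute $0$, so the identity holds in the $\pm$ form as stated.

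Once the dilation is set up the argument is pure bookkeeping, so I do not anticipate a genuine difficulty. The only points meriting a little care are the exact comparison of multiplicities among the three sequences $\mu_j(A)$, $\mu_j(\widetilde A)$, $\lambda_j^\pm(\widetilde A)$, and checking that $A\mapsto \widetilde A$ carries $(\sL_{p,\infty})_0$ \emph{into itself} and not merely into $\sL_{p,\infty}$. Alternatively, one may skip the dilation entirely and quote directly the non-selfadjoint case of the Birman--Solomyak perturbation theorem~\cite{BS:JFAA70}, which is established by precisely this device.
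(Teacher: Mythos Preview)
Your argument is correct. The paper itself does not spell out a proof: it attributes part~(2) to Ky Fan~\cite{Fa:PNAS51} and remarks that both Propositions~\ref{prop:Bir-Sol.closedness} and~\ref{prop:Bir-Sol.closedness-sing} are elementary consequences of the Ky Fan inequalities~(\ref{eq:Quantized.properties-mun2}) and~(\ref{eq:Bir-Sol.Weyl-ineq-lambdapm}), referring to~\cite{BS:JFAA70}. The intended direct proof thus runs the Birman--Solomyak perturbation scheme for singular values, using $\mu_{j+k}(S+T)\le \mu_j(S)+\mu_k(T)$ in exact parallel to the $\lambda^\pm$-version used for Proposition~\ref{prop:Bir-Sol.closedness}.

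Your route is genuinely different: you reduce to the already-established selfadjoint case via the $2\times 2$ dilation $\widetilde A$, exploiting $\lambda_j^\pm(\widetilde A)=\mu_j(A)$. This buys you the result without rerunning the perturbation estimate, at the cost of the spectral bookkeeping around the dilation that you already flag (and handle correctly). Both approaches are short and of comparable difficulty; you yourself note the direct alternative in your closing sentence.
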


\begin{remark}
 The 2nd part of Proposition~\ref{prop:Bir-Sol.closedness-sing} is due originally to Ky Fan~\cite[Theorem 3]{Fa:PNAS51} (see also~\cite[Theorem II.2.3]{GK:AMS69}). 
 \end{remark}

\begin{remark}
 We stress out that Proposition~\ref{prop:Bir-Sol.closedness} and Proposition~\ref{prop:Bir-Sol.closedness-sing} are elementary consequences of the Ky Fan's inequalities~(\ref{eq:Quantized.properties-mun2}) and~(\ref{eq:Bir-Sol.Weyl-ineq-lambdapm}) 
 (see~\cite{BS:JFAA70}; compare~\cite{SZ:arXiv21}). 
\end{remark}

\subsection{Example: Weyl's laws for negative order \psidos} \label{sec:Weyl-neg-PsiDOs}\label{sec:Bir-Sol-PDOs} 
Suppose that $(M^n,g)$ is a closed Riemannian manifold and $E$ a Hermitian vector bundle over $M$.  Given any $m\in \R$ we denote by $\Psi^m(M,E)$ the space of $m$-th order classical pseudodifferential operators (\psidos) $P:C^\infty(M,E)\rightarrow C^\infty(M,E)$. For $P\in \Psi^m(M,E)$ we denote by $\sigma(P)(x,\xi)$ its principal symbol; this is a smooth section of $\End(E)$ over $T^*M\setminus 0$. Recall that any $P\in \Psi^m(M,E)$ with $m\leq 0$ extends to a bounded operator $P:L_2(M,E)\rightarrow L_2(M,E)$; if $m<0$, then this operator is actually in the weak Schatten class $\sL_{p,\infty}$ with $p=n|m|^{-1}$.

\begin{proposition}[Birman-Solomyak~\cite{BS:VLU77, BS:VLU79, BS:SMJ79}]\label{prop:Bir-Sol-asymp} Let $P\in \Psi^{-m}(M,E)$, $m<0$, and set $p=nm^{-1}$. 
\begin{enumerate}
 \item $P$ and $|P|$ are Weyl operators in $\sL_{p,\infty}$. 
 
 \item  We have
\begin{equation}
\lim_{j\rightarrow \infty} j^{\frac1p} \mu_j(P)=\bigg[ \frac1{n} (2\pi)^{-n} \int_{S^*M} \tr_E\big[ |\sigma(P)(x,\xi)|^{p} \big] dx d\xi\bigg]^{\frac1{p}}.
 \label{eq:Weyl.Bir-Sol-mu} 
\end{equation}

 \item If $P$ is selfadjoint, then 
\begin{equation}
 \lim_{j\rightarrow \infty} j^{\frac1p} \lambda^\pm_j(P)= \bigg[\frac1{n} (2\pi)^{-n} \int_{S^*M} \tr_E\big[ \sigma(P)(x,\xi)_\pm^{p} \big] dx d\xi\bigg]^{\frac1{p}}. 
 \label{eq:Weyl.Bir-Sol-selfadjoint}
\end{equation}
\end{enumerate}
 \end{proposition}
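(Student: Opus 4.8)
The plan is to reduce the statement to known results of Birman--Solomyak for scalar pseudodifferential operators together with the structural facts about Weyl operators recorded in Section~\ref{sec:NCInt}. For a $P \in \Psi^{-m}(M,E)$ with $m>0$, set $p = n/m$; then $|P| = (P^*P)^{1/2}$, and since $P^*P \in \Psi^{-2m}(M,E)$ is selfadjoint and nonnegative with principal symbol $\sigma(P)(x,\xi)^*\sigma(P)(x,\xi)$, the operator $|P|$ lies in $\Psi^{-m}(M,E)$ (by the ellipticity of the square root construction for classical \psidos, or by the functional calculus of Seeley) with principal symbol $|\sigma(P)(x,\xi)| = \big(\sigma(P)(x,\xi)^*\sigma(P)(x,\xi)\big)^{1/2}$. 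This already gives the membership $P, |P| \in \sL_{p,\infty}$ claimed in part~(1) and reduces the singular-value asymptotics~(\ref{eq:Weyl.Bir-Sol-mu}) to the eigenvalue asymptotics for the selfadjoint nonnegative operator $|P| \in \Psi^{-m}(M,E)$.

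First I would treat the selfadjoint case, which contains~(\ref{eq:Weyl.Bir-Sol-selfadjoint}) and, applied to $|P|$, also~(\ref{eq:Weyl.Bir-Sol-mu}). The idea is the standard one: if $P = P^* \in \Psi^{-m}(M,E)$, decompose $P = P_+ - P_-$ using a microlocal cutoff near the (closed, conic) sets where $\sigma(P)$ is positive- resp.\ negative-definite, modulo a smoothing remainder. More precisely, one writes $\sigma(P)_\pm$ and constructs $P_\pm \in \Psi^{-m}(M,E)$, $P_\pm \geq 0$ modulo $\Psi^{-m-1}$, with $\sigma(P_\pm) = \sigma(P)_\pm$ and $P_+ P_- \in \Psi^{-\infty}$; then $\lambda_j^\pm(P)$ is governed by $\mu_j(P_\pm)$ up to lower-order corrections controlled by Ky Fan's inequality~(\ref{eq:Bir-Sol.Weyl-ineq-lambdapm}) and the fact that a \psido\ of order $< -m$ lies in $(\sL_{p,\infty})_0$, hence is a negligible perturbation by Proposition~\ref{prop:Bir-Sol.closedness}. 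So it suffices to prove the asymptotics for a single nonnegative elliptic (on its essential support) operator, for which one invokes the classical Weyl law: the eigenvalue counting function $N(P;\lambda) = \#\{j : \lambda_j(P) > \lambda\}$ satisfies $\lambda^p N(P;\lambda) \to \tfrac1n (2\pi)^{-n}\int_{S^*M}\tr_E[\sigma(P)(x,\xi)^p]\,dxd\xi$ as $\lambda \to 0^+$, and then Remark~\ref{rmk:Bir-Sol.counting} (equation~(\ref{eq:Bir-Sol.counting-Lambda})) converts this into the stated limit for $j^{1/p}\lambda_j(P)$.

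The classical Weyl law itself is obtained in the usual way: by the spectral theorem and a Tauberian argument (Karamata), the small-$\lambda$ behaviour of $N(P;\lambda)$ is read off from the singularity at $t\to 0^+$ of $\Tr[f(tP^{-1})]$ for suitable test functions, equivalently from the pole structure of the zeta function $\zeta_P(s) = \Tr(P^s)$ (or the short-time asymptotics of a heat-type semigroup associated with $P^{-1/m}$), which is in turn extracted from the symbol calculus via the standard parametrix construction for the resolvent of an elliptic \psido. The leading coefficient is the phase-space integral $\tfrac1n (2\pi)^{-n}\int_{S^*M}\tr_E[\sigma(P)(x,\xi)^p]\,dxd\xi$; the factor $1/n$ arises from integrating the homogeneity-$-m$ symbol over the cosphere bundle fiber (radial integration $\int_0^\infty r^{n-1-mp}\,dr/(\cdots)$ with $mp=n$), exactly as in Weyl's original asymptotics. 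For the matrix-valued (vector bundle) case this goes through verbatim with traces over $\End(E_x)$ inserted, since $E$ is locally trivial and the symbol calculus is purely local.

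The main obstacle, and the only genuinely non-formal point, is justifying that the positive/negative spectral asymptotics of a \emph{non}-selfadjoint, or selfadjoint-but-sign-indefinite, \psido\ are determined by its principal symbol alone, i.e.\ the reduction from general $P$ to the nonnegative elliptic case. This requires the microlocal splitting $P = P_+ - P_- + (\text{lower order})$ with $P_+ P_- \in \Psi^{-\infty}$, which is delicate precisely because $\sigma(P)_\pm$ is only Lipschitz (not smooth) across the locus where eigenvalues of $\sigma(P)$ change sign; one handles this either by a smoothing/approximation argument on the symbol combined with the continuity of $\Lambda^\pm$ on $\sW_{p,\infty}$ (Proposition~\ref{prop:Bir-Sol.closedness}(1)), or by appealing directly to the original Birman--Solomyak analysis. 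In fact, since the proposition is explicitly attributed to Birman--Solomyak~\cite{BS:VLU77, BS:VLU79, BS:SMJ79}, the cleanest route is simply to cite those papers for the full statement; the sketch above indicates how the argument runs and why the normalization constant takes the stated form.
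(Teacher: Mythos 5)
The paper itself does not prove Proposition~\ref{prop:Bir-Sol-asymp}: it is stated as a citation to Birman--Solomyak, and the remark immediately following explicitly defers to~\cite{BS:VLU77, BS:VLU79, BS:SMJ79} and to~\cite{Po:NCIntegration} for a ``soft proof''. Your closing move of simply citing those papers therefore coincides with what the paper does, and is the correct level of rigour for this statement in context.

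That said, the sketch you offer before falling back on citation contains a genuine error that is worth naming, because it is precisely the point that makes this proposition nontrivial. You assert that $|P|=(P^*P)^{1/2}$ lies in $\Psi^{-m}(M,E)$ ``by the ellipticity of the square root construction''. But $P^*P$ has principal symbol $\sigma(P)^*\sigma(P)$, which need not be invertible at every $(x,\xi)\in S^*M$ --- the proposition places no ellipticity hypothesis on $P$. If $\sigma(P)(x,\xi)$ has a nontrivial kernel somewhere, then $P^*P$ is not elliptic, Seeley's functional calculus does not apply, and $|P|$ is \emph{not} a classical \psido; its ``symbol'' $|\sigma(P)|$ is merely Lipschitz. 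The same degeneration defeats the proposed microlocal splitting $P = P_+ - P_- \pmod{\Psi^{-\infty}}$ with $P_\pm\in\Psi^{-m}$ and $P_+P_-\in\Psi^{-\infty}$: in the matrix case the positive and negative parts of $\sigma(P)(x,\xi)$ can be nonzero simultaneously at the same $(x,\xi)$ (mixed signature), so they cannot be separated by conic cutoffs, and even in the scalar case the zero locus of $\sigma(P)$ is a genuine conic hypersurface on which any cutoff remainder is a zeroth-order (not smoothing) correction. Framing the obstruction as ``$\sigma(P)_\pm$ is only Lipschitz'' understates it --- one cannot reduce to a pair of elliptic nonnegative \psidos\ plus a negligible perturbation. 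The actual arguments of Birman--Solomyak (and the soft proof in~\cite{Po:NCIntegration}) do not go through the elliptic Weyl law via a microlocal split of $P$; instead one establishes the asymptotics for a class of operators with ``simple'' (e.g.\ piecewise-constant or elliptic) symbols, shows that an arbitrary $P\in\Psi^{-m}$ can be approximated by such operators in the $\sL_{p,\infty}$-quasinorm in a way that controls both $\mu_j$ and $\lambda_j^\pm$, and invokes the closedness of $\sW_{p,\infty}$ and $\sW_{|p,\infty|}$ under such limits (Propositions~\ref{prop:Bir-Sol.closedness} and~\ref{prop:Bir-Sol.closedness-sing}). Your final sentence correctly defers to the literature; just be aware that the intermediate reduction you describe would not close as written.

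Minor remark: the hypotheses should read $m>0$, $p=n/m$, as you tacitly assume; the ``$m<0$'' in the statement is a typo.
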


\begin{remark}
In~\cite{BS:VLU77, BS:VLU79} Birman-Solomyak established the above Weyl's laws for compactly supported pseudodifferential operators on $\R^n$ under very low regularity assumptions on the symbols. Furthermore, the symbols are allowed to be anisotropic. This was extended to classical \psidos\ on closed manifolds in~\cite{BS:SMJ79}. Unfortunately, the key technical details are somehwat compressed and contained in the Russian paper~\cite{BS:VLU79}, the translation of which is unavailable. We refer to~\cite{Po:NCIntegration} for a soft proof of Proposition~\ref{prop:Bir-Sol-asymp}.
\end{remark}

\begin{remark}
 We refer to~\cite{AA:FAA96, An:MUSSRS90, BY:JSM84, DR:LNM87, Gr:CPDE14, Iv:Springer19, Po:NCIntegration, Ro:arXiv21, RS:EMS21}, and the references therein, for various generalizations and applications of Birman-Solomyak's asymptotics.  
\end{remark}

\section{Cwikel-Type Estimates for Matrix-Valued $\LlogL$-Orlicz Potentials}\label{sec.Orlicz}  
In this section, we establish general $\sL_{1,\infty}$-Cwikel type estimates for operators of the form $QuP$, where $u$ is a matrix-valued $\LlogL$-Orlicz potential and $Q$ and $P$ are \psidos\ of order $-n/2$ on an $n$-dimensional closed manifold. In the scalar case Rozenblum~\cite{Ro:arXiv21} established such estimates
for potentials of the form $V=f\mu$, where $\mu$ is an Alfhors-regular measure supported on a  regular submanifold $\Sigma \subset M$ and  $f$ is a real-valued $\LlogL$-Orlicz function on $\Sigma$ with respect to $\mu$. Our aim is to explain how in the case of matrix-valued $\LlogL$-Orlicz potentials defined on all $M$ the estimates are easy consequences of the specific Cwikel-type estimates on tori in the recent preprint of Sukochev-Zanin~\cite{SZ:arXiv20}.  

Throughout this section we make the convention that $C_{abd}$ are positive constants which depend only on the parameters $a$, $b$, $d$, etc., and may change from line to line. 

\subsection{Sobolev spaces} 
Given $s\geq 0$,  the (Bessel potential) Sobolev space $W^{s}_2(\R^n)$ consists of all $u\in L_2(\R^n)$ such that $(1+\Delta)^{s/2}u\in L_2(\R^n)$, where $\Delta=-(\partial_{x_1}^2+\cdots +\partial_{x_n}^2)$ is the (positive) Laplacian. This is a  Hilbert space with respect to the norm, 
\begin{equation*}
 \|u\|_{W^{s}_2} =\big\|(1+\Delta)^{\frac{s}{2}}u\big\|_{L_2} = (2\pi)^{-\frac{n}2}\bigg( \int \big(1+|\xi|\big)^s |\hat{u}(\xi)|^2d\xi\bigg)^{\frac12}, \qquad u\in W^{s}_2(\R^n). 
\end{equation*}
Here $\hat{u}(\xi)=\int e^{-ix\cdot \xi}u(x)dx$ is the Fourier transform of $u$. Recall that by Sobolev's embedding theorem, for $s<n/2$ we have a continuous embedding,
\begin{equation}
 W^{s}_2(\R^n)\subset L_p(\R^n), \qquad p=\frac{n}{2}-s. 
 \label{eq:Orlicz.Sobolev-embed-Lp}  
\end{equation}
For $s>n/2$ we have continuous embeddings into H\"older spaces $C^{k,\alpha}(\R^n)$. Namely,  
\begin{equation}
 W^{s}_2(\R^n)\subset C^{k,\alpha}(\R^n),
 \label{eq:Orlicz.Sobolev-embed-Ckalpha}  
\end{equation}
 where $k\in \N_0$ and $\alpha \in (0,1]$ are such that $s-n/2=k+\alpha$. 
 
If $s>0$, then $W^{-s}_2(\R^n)$ consists of all tempered distributions $u\in \sS'(\R^n)$ such that  $(1+\Delta)^{-s/2}u\in L_2(\R^n)$, i.e.,  $(1+|\xi|^2)^{-s/2}\hat{u}\in L_2(\R^n)$. This is also a Hilbert space, which is naturally identified with the anti-linear dual of $W^{s}_2(\R^n)$, i.e., the Hilbert space of continuous anti-linear forms on $W^{s}_2(\R^n)$. More precisely, the inner product $L_2(\R^n)\times L_2(\R^n)\rightarrow \C$ uniquely extends to a nondegenerate continuous sesquilinear pairing $W^{-s}_2(\R^n)\times W^{s}_2(\R^n)\rightarrow \C$. 

Let $(M^n,E)$ be a closed manifold and $E^r$ a Hermitian vector bundle over $M$. For $s\geq 0$ the Sobolev space $W^{s}_2(\R^n)$ consists of all sections $u\in L_2(M,E)$ such that, for every local chart $\kappa:U\rightarrow V\subset \R^n$ over which there is a trivialization $\tau:E_{|U}\rightarrow U\times \C^r$ and for every test function $\varphi\in C^\infty_c(U)$, the pushforward $\kappa_*\tau_*(\varphi u)$ is in $W^{s}_2(\R^n)\otimes \C^r$. 

The Sobolev space $W^{s}_2(M,E)$ is a Hilbert space. For instance, if $(\varphi_i)_{1\leq i \leq N}$ is a finite smooth partition of unity subordinate to an open cover $(U_i)_{1\leq i \leq N}$ such that each open $U_i$ is the domain of chart $\kappa_i:U_i\rightarrow \R^n$ over which there is a trivialization $\tau_i:E_{|U_i}\rightarrow U_i\times \C^r$, then a Hilbert norm is given by
\begin{equation}
 \|u\|_{W_2^s}:= \bigg(\sum_{1\leq i \leq N} \big\|(\kappa_i\circ\tau_i)_*(\varphi_i u)\big\|_{W_2^s}^2\bigg)^{\frac12}, \qquad u\in W_2^s(M,E).
 \label{eq:Orlicz.Sobolev-norm-ME}
\end{equation}
The corresponding locally convex topology does not depend on the choice of the partition of unity. 

We define $W^{-s}_2(M,E)$ to be the anti-linear dual of $W^{s}_2(M,E)$. Recall that if $P\in \Psi^{m}(M,E)$, $m\in \R$, then $P$ extends to a bounded map $P:W^{2,s+m}(M,E)\rightarrow W^{s}_2(M,E)$ for every $s\in \R$. In particular, if $\Delta_E=\nabla^*\nabla$ is the Laplacian of some Hermitian connection on $E$, then we have 
\begin{equation*}
 W^{s}_2(M,E)= \big\{u\in \sD'(M,E); \ (1+\Delta_E)^{s/2}u\in L_2(M,E)\big\}, \qquad s\in \R. 
\end{equation*}
An equivalent Hilbert norm on $W^s_2(M,E)$ then is $u\rightarrow \|(1+\Delta_E)^{s/2}u\|_{L_2}$. 

\subsection{Orlicz spaces and critical Sobolev embedding} Let $(\Omega, \mu)$ be a $\sigma$-finite measure space. For our purpose we may take $\Omega$ to  be $\R^n$ or a bounded domain in $\R^n$ equipped with the Lebesgue measure,  or a closed Riemannian manifold equipped with its Riemannian measure. 

A \emph{Young function} is a convex function $F:[0,\infty)\rightarrow [0,\infty)$ such that
\begin{equation*}
 \lim_{x\rightarrow 0}x^{-1}F(x)=0 \quad \text{and} \quad \lim_{x\rightarrow \infty} x^{-1}F(x)=\infty.
\end{equation*}
The Orlicz space $L_F(\Omega)$ associated to such a function consists of all (classes of) measurable functions $f:\Omega\rightarrow \C$ for which $F(\lambda^{-1}|f|)\in L_1(\Omega)$ for some $\lambda>0$. We refer to~\cite[Chapter 2]{Si:Cambridge11} for a concise and up to date exposition of Orlicz spaces. 
In particular, $L_F(\Omega)$ is a Banach space with respect to the norm, 
\begin{equation}
 \|f\|_{L_F} = \inf\left\{\lambda>0; \ \|F(\lambda^{-1}|f|)\|_{L_1}\leq 1\right\}, \qquad f\in  L_F(\Omega).
 \label{eq:Orlicz.Orlicz-norm} 
\end{equation}

For $F(t)=t^p$, $p\geq 1$, we recover the $L_p$-space $L_p(\Omega)$. For $F(t)=(1+t)\log(1+t)-t$ we get Zygmund's space $\LlogL(\Omega)$. We observe that if $\mu(\Omega)<\infty$, then we have continuous inclusions, 
\begin{equation*}
 L_p(\Omega) \subset \LlogL(\Omega) \subset L_1(\Omega), \qquad p>1.
\end{equation*}
Moreover, in this case $L_\infty(\Omega)$ is dense in $\LlogL(\Omega)$. If we further assume that  $\Omega$ is a bounded domain in $\R^n$ or a compact Riemannian manifold, then $C^\infty_c(\Omega)$ is a dense subspace of $\LlogL(\Omega)$. 

For $q=1,2$ we denote by $\exp(L_q)(\Omega)$ the Orlicz space associated to $F(t)=\exp(t^q)-1-t^q$. If $\Omega=\R^n$ the
relevance of $\exp(L_2)(\R^n)$ stems from filling the gap in Sobolev's embedding theorems~(\ref{eq:Orlicz.Sobolev-embed-Lp})-(\ref{eq:Orlicz.Sobolev-embed-Ckalpha}). More precisely, we have the following version of M\"oser-Trudinger's inequality. 

\begin{proposition}[{Ozawa~\cite[Theorem~1]{Oz:JFA95}}] \label{prop:Orlicz.critical-Sobolev-embed} 
 The Sobolev space $W^{n/2}_2(\R^n)$ embeds continuously into the Orlicz space $\exp(L_2)(\R^n)$. 
\end{proposition}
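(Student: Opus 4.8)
\emph{The plan.} By the homogeneity of both sides it is enough to exhibit constants $\alpha>0$ and $C_{0}<\infty$, depending only on $n$, such that
\begin{equation*}
 \|u\|_{W^{n/2}_2(\R^n)}\leq 1\ \Longrightarrow\ \int_{\R^n}\Big(e^{\alpha |u(x)|^{2}}-1-\alpha|u(x)|^{2}\Big)\,dx\leq C_{0}.
\end{equation*}
Indeed, writing $F(t)=e^{t^{2}}-1-t^{2}$, so that $\exp(L_2)(\R^n)=L_F(\R^n)$, and using that $F$ is convex with $F(0)=0$, hence $F(\theta t)\leq \theta F(t)$ for $0\leq \theta\leq 1$, such an estimate forces $\|u\|_{\exp(L_2)}\leq \max(1,C_{0})\,\alpha^{-1/2}$ whenever $\|u\|_{W^{n/2}_2}=1$; the general case follows by scaling $u\mapsto u/\|u\|_{W^{n/2}_2}$. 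To prove the boxed implication I would pass to the distribution function $t\mapsto \big|\{|u|>t\}\big|$ via the layer-cake identity
\begin{equation*}
 \int_{\R^n}F\big(\sqrt{\alpha}\,|u|\big)\,dx=\int_{0}^{\infty}2\alpha t\big(e^{\alpha t^{2}}-1\big)\,\big|\{|u|>t\}\big|\,dt,
\end{equation*}
so that everything reduces to Gaussian-type decay of $\big|\{|u|>t\}\big|$ as $t\to\infty$.

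\emph{The main step.} The crux is the bound
\begin{equation*}
 \|u\|_{L_{q}(\R^n)}\leq C_{1}\sqrt{q}\ \|u\|_{W^{n/2}_2(\R^n)},\qquad 2\leq q<\infty,
\end{equation*}
with $C_{1}=C_{1}(n)$ independent of $q$. I would obtain it by writing $u=G_{n/2}*g$, where $g=(1+\Delta)^{n/4}u\in L_2(\R^n)$ satisfies $\|g\|_{L_2}=\|u\|_{W^{n/2}_2}$ and $G_{n/2}$ is the Bessel potential kernel, i.e.\ the $L_1$-function with Fourier transform $(1+|\xi|^{2})^{-n/4}$. Young's convolution inequality yields $\|u\|_{L_{q}}\leq \|G_{n/2}\|_{L_{r}}\|g\|_{L_2}$ with $\tfrac1r=\tfrac12+\tfrac1q$, so $r=\tfrac{2q}{q+2}\in[1,2)$. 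The classical pointwise estimates for the Bessel kernel --- $0\leq G_{n/2}(x)\leq C|x|^{-n/2}$ for $|x|\leq 1$ and exponential decay for $|x|\geq 1$ --- give $\|G_{n/2}\|_{L_{r}}^{r}\leq C(2-r)^{-1}+C$; since $2-r=4/(q+2)$ this is $\leq C(q+2)$, whence $\|G_{n/2}\|_{L_{r}}\leq\big(C(q+2)\big)^{(q+2)/(2q)}\leq C_{1}\sqrt{q}$ for $q\geq 2$ (here one uses that the exponent $(q+2)/(2q)=\tfrac12+\tfrac1q$ tends to $\tfrac12$). In other words the failure of $W^{n/2}_2\hookrightarrow L_{\infty}$ is only logarithmic, and this is the quantitative form of it.

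\emph{From $L_q$ bounds to the exponential bound.} Assume $\|u\|_{W^{n/2}_2}=1$. Chebyshev's inequality and the main step give $\big|\{|u|>t\}\big|\leq (C_{1}\sqrt q\,/t)^{q}=(C_{1}^{2}q/t^{2})^{q/2}$ for every $q\geq 2$; minimising the right-hand side in $q$ (the optimal choice is $q\sim t^{2}/(eC_{1}^{2})$, admissible once $t\geq C_{1}\sqrt{2e}$) produces $\big|\{|u|>t\}\big|\leq e^{-\beta t^{2}}$ for large $t$, with $\beta=\beta(n)>0$, while for small $t$ one has the crude bound $\big|\{|u|>t\}\big|\leq \|u\|_{L_2}^{2}t^{-2}\leq Ct^{-2}$ coming from $\|u\|_{L_2}\leq\|G_{n/2}\|_{L_1}$. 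Inserting these two estimates into the layer-cake integral and choosing $\alpha<\beta/2$ makes the near-zero part finite (there the integrand is $\mathrm{O}(t)$) and the tail part a convergent Gaussian integral, with a total bound $C_{0}=C_{0}(n)$. This gives the boxed implication, and the proposition follows.

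\emph{The main obstacle.} All the difficulty sits in the main step: forcing the embedding constant into $L_{q}$ to grow no faster than $\sqrt q$. This hinges on the near-diagonal asymptotics $G_{n/2}(x)\asymp |x|^{-n/2}$ of the Bessel potential kernel (valid since $0<n/2<n$), which is standard and can be quoted from the literature on Bessel potentials; if one prefers to avoid the kernel one can instead iterate the subcritical Sobolev inequalities while keeping track of the constants, but the convolution route is the most transparent. Everything after the main step is routine bookkeeping with the distribution function.
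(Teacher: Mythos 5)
Your argument is correct; the paper offers no proof of this proposition, simply citing Ozawa, and what you have written is essentially the standard (and Ozawa's) proof: an $L_q$ bound with constant $\mathrm{O}(\sqrt q)$ for $2\le q<\infty$, followed by exponentiation. The only cosmetic differences from Ozawa's write-up are that you derive the $L_q$ bound from Young's inequality with the Bessel kernel $G_{n/2}\in L_r$, $r<2$, rather than from a Fourier-splitting Gagliardo--Nirenberg inequality, and that you pass to exponential integrability via Chebyshev, optimization in $q$, and the layer-cake formula rather than by summing the Taylor series $\sum_{k\ge 2}\frac{\alpha^k}{k!}\|u\|_{L_{2k}}^{2k}$ with Stirling's formula; both variants are sound.
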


Supposed now that $(M^n,g)$ is a closed Riemannian manifold and $(E^r, \acou{\cdot}{\cdot}_E)$ is a Hermitian vector bundle over $M$. We define $L_F(M,E)$ as the set of measurable sections $u:M\rightarrow E$ such that the function $\|u(x)\|_E$  is in $\LlogL(M)$, where $\|\cdot\|_E$ is the norm associated with the Hermitian metric of $E$. We then equip $\LlogL(M,E)$ with the norm, 
\begin{equation*}
 \| u\|_{L_F(M,E)}= \big\| \|u\|_E \big\|_{L_F(M)}, \qquad u\in L_F(M,E)
\end{equation*}
Equivalently, $u\in L_F(M,E)$ if and only if,  for every local chart $\kappa:U\rightarrow V\subset \R^n$ over which there is a trivialization $\tau:E_{|U}\rightarrow U\times \C^r$ and for every test function $\varphi\in C^\infty_c(U)$, the pushforward $(\kappa\circ\tau)_*(\varphi u)$ is in $L_F(\R^n)\otimes \C^r$. 
If $(\varphi_i)_{1\leq i \leq N}$ is a finite smooth partition of unity subordonate to an open cover $(U_i)_{1\leq i \leq N}$ such that each open $U_i$ is the domain of chart $\kappa_i:U_i\rightarrow \R^n$ over which there is a trivialization $\tau_i:E_{|U_i}\rightarrow U_i\times \C^r$, then an equivalent Banach norm is given by
\begin{equation}
 u\longrightarrow \sum_{1\leq i \leq N} \big\| \|(\kappa_i\circ\tau_i)_*(\varphi_i u)\|_{\C^r}\big\|_{L_F}.
 \label{eq:Orlicz.Orlicz-norm-ME}
\end{equation}
Note that $C^\infty(M,E)$ is a dense subspace of $L_F(M,E)$.  

In view of the definitions~(\ref{eq:Orlicz.Sobolev-norm-ME}) and~(\ref{eq:Orlicz.Orlicz-norm-ME}) of the norms on $W_2^{n/2}(M,E)$ and $\exp(L_2)(M,E)$ we have the following consequence of Proposition~\ref{prop:Orlicz.critical-Sobolev-embed}. 

\begin{corollary}\label{cor:critical-Sobolev-embedME} 
 The Sobolev space $W^{n/2}_2(M,E)$ embeds continuously into $\exp(L_2)(M,E)$. 
\end{corollary}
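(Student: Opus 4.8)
The plan is to deduce Corollary~\ref{cor:critical-Sobolev-embedME} from the flat-space Möser--Trudinger--Ozawa inequality in Proposition~\ref{prop:Orlicz.critical-Sobolev-embed} by a routine partition-of-unity argument, using that both norms in play have been \emph{defined} in~(\ref{eq:Orlicz.Sobolev-norm-ME}) and~(\ref{eq:Orlicz.Orlicz-norm-ME}) as sums of local flat-space norms. First I would fix a finite smooth partition of unity $(\varphi_i)_{1\leq i\leq N}$ subordinate to an open cover $(U_i)$ by chart domains $\kappa_i\colon U_i\to\R^n$ over which $E$ is trivialized by $\tau_i\colon E_{|U_i}\to U_i\times\C^r$, and use the corresponding equivalent norms~(\ref{eq:Orlicz.Sobolev-norm-ME}) on $W_2^{n/2}(M,E)$ and~(\ref{eq:Orlicz.Orlicz-norm-ME}) on $\exp(L_2)(M,E)$. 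The key observation is that for each index $i$, the map $u\mapsto (\kappa_i\circ\tau_i)_*(\varphi_i u)$ sends $W_2^{n/2}(M,E)$ boundedly into $W_2^{n/2}(\R^n)\otimes\C^r$: multiplication by the fixed smooth compactly supported function $\varphi_i$ is bounded on $W_2^{n/2}$, the coordinate pushforward of a compactly supported $W_2^{n/2}$-section is again $W_2^{n/2}$ (diffeomorphism invariance of Sobolev spaces on the support), and the trivialization $\tau_i$ acts by a smooth matrix-valued coefficient.

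Next I would apply Proposition~\ref{prop:Orlicz.critical-Sobolev-embed} componentwise: each component of $(\kappa_i\circ\tau_i)_*(\varphi_i u)$ lies in $\exp(L_2)(\R^n)$ with norm controlled by its $W_2^{n/2}(\R^n)$-norm, and taking the $\C^r$-norm pointwise and then the $\exp(L_2)$-norm we get
\begin{equation*}
 \big\| \|(\kappa_i\circ\tau_i)_*(\varphi_i u)\|_{\C^r}\big\|_{\exp(L_2)} \leq C_i \big\|(\kappa_i\circ\tau_i)_*(\varphi_i u)\big\|_{W_2^{n/2}} \leq C_i' \|u\|_{W_2^{n/2}}.
\end{equation*}
Summing over $i=1,\dots,N$ and invoking the equivalence of~(\ref{eq:Orlicz.Orlicz-norm-ME}) with the intrinsic norm $\|u\|_{\exp(L_2)(M,E)}= \big\|\,\|u\|_E\,\big\|_{\exp(L_2)(M)}$ yields $\|u\|_{\exp(L_2)(M,E)}\leq C\|u\|_{W_2^{n/2}(M,E)}$, which is exactly the claimed continuous embedding. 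It only remains to note that $W_2^{n/2}(M,E)\subset L_2(M,E)\subset \exp(L_2)(M,E)$ as sets (the last inclusion because $M$ has finite volume and $\exp(L_2)$ sits between $L_\infty$ and every $L_p$), so the inequality genuinely records a set-theoretic inclusion with bounded inclusion map.

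The only mildly delicate point — and the one place worth spelling out — is the compatibility of the two \emph{equivalent} norm systems: I am using the local-norm definitions~(\ref{eq:Orlicz.Sobolev-norm-ME}) and~(\ref{eq:Orlicz.Orlicz-norm-ME}) rather than the intrinsic ones, so I must either work throughout with a single fixed chart/trivialization/partition-of-unity data on both sides, or else cite the stated fact that the locally convex topology (equivalently, the norm up to equivalence) is independent of these choices. I expect no real obstacle here: all the ingredients (boundedness of multiplication by $\varphi_i$ on $W_2^{n/2}$, diffeomorphism invariance on compact supports, the scalar Ozawa inequality, and the finiteness of $\Vol(M)$ giving $L_2\hookrightarrow\exp(L_2)$) are standard, and the matrix-valued statement follows from the scalar one simply because $\|v\|_{\C^r}\leq \sum_{k=1}^r |v_k|$ with $r$ fixed. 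Hence the corollary is essentially a bookkeeping consequence of Proposition~\ref{prop:Orlicz.critical-Sobolev-embed} together with the way the norms on $W_2^{n/2}(M,E)$ and $\exp(L_2)(M,E)$ were set up.
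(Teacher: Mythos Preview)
Your approach is exactly what the paper intends: the corollary is stated there as an immediate consequence of Proposition~\ref{prop:Orlicz.critical-Sobolev-embed} ``in view of the definitions~(\ref{eq:Orlicz.Sobolev-norm-ME}) and~(\ref{eq:Orlicz.Orlicz-norm-ME}),'' and you have simply spelled out the partition-of-unity bookkeeping. One small correction: the inclusion $L_2(M,E)\subset \exp(L_2)(M,E)$ you invoke at the end is false---on a finite-measure space $\exp(L_2)$ is \emph{contained in} every $L_p$, as your own parenthetical says, not the reverse---but this side remark is unnecessary anyway, since the norm inequality you derived already yields both the set-theoretic inclusion $W_2^{n/2}\subset\exp(L_2)$ and its continuity.
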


\begin{remark}
 In the scalar case, i.e., $E$ is the trivial line bundle, the above result was obtained by Fontana~\cite{Fo:CMH93} in any dimension $n\geq 2$ and by Branson-Chang-Yang~\cite{BCY:CMP92} in dimension $n=4$.  These papers predate~\cite{Oz:JFA95}. 
\end{remark}

As $e^t-1-t$ and $(1+t)\log(1+t)-t$ are convex conjugates of each other (see~\cite[Example 2.18]{Si:Cambridge11}) we have the following version of H\"older's inequality. 

\begin{proposition}[see~{\cite[Theorem 2.21]{Si:Cambridge11}}]\label{prop:Critical.Holder-Orlicz}
 If $u\in \LlogL(M)$ and $v\in \exp(L_1)(M)$, then $uv\in L_1(M)$ with norm inequality,
\begin{equation*}
 \|uv\|_{L_1} \leq C_{M} \|u\|_{\LlogL} \|v\|_{\exp(L_1)}. 
\end{equation*}
 \end{proposition}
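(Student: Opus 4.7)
The plan is to derive this from Young's inequality for the complementary pair of Young functions $F(t)=(1+t)\log(1+t)-t$ and $G(t)=e^t-1-t$. The sentence preceding the statement already notes that these are convex conjugates of each other; I would verify this explicitly by computing $F'(s)=\log(1+s)$, inverting to $(F')^{-1}(t)=e^t-1$, and integrating to recover the Legendre transform $G(t)=\int_0^t(e^s-1)\,ds=e^t-1-t$. Young's inequality then yields the pointwise bound $st\leq F(s)+G(t)$ for all $s,t\geq 0$.

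Set $\lambda=\|u\|_{\LlogL}$ and $\mu=\|v\|_{\exp(L_1)}$, and assume both are nonzero (otherwise there is nothing to prove). By the definition of the Luxemburg norm in~(\ref{eq:Orlicz.Orlicz-norm}), we may choose sequences $\lambda_k\downarrow\lambda$ and $\mu_k\downarrow\mu$ with $\|F(|u|/\lambda_k)\|_{L_1}\leq 1$ and $\|G(|v|/\mu_k)\|_{L_1}\leq 1$ for every $k$. Applying Young's inequality pointwise with $s=|u(x)|/\lambda_k$ and $t=|v(x)|/\mu_k$, then integrating against the Riemannian volume, the right-hand side contributes at most $2$, so $\|uv\|_{L_1}\leq 2\lambda_k\mu_k$. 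Letting $k\to\infty$ gives the desired inequality with $C_M=2$.

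There is no substantive obstacle here: the argument is entirely routine and is precisely the content of~\cite[Theorem~2.21]{Si:Cambridge11} cited in the statement. The only mild book-keeping point is that the Luxemburg infimum need not be attained, which is why the approximation by $\lambda_k,\mu_k$ is required; once that is arranged, a single integration closes the proof. In the scalar case treated here the constant $C_M=2$ is in fact universal, since $L_F(M)$ is defined intrinsically from the Riemannian measure; any dependence on $M$ would only arise if one worked instead with the equivalent partition-of-unity norm~(\ref{eq:Orlicz.Orlicz-norm-ME}), which would introduce the atlas-dependent equivalence constants into $C_M$.
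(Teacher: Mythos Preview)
Your argument is correct and is exactly the standard proof of the H\"older inequality for complementary Orlicz pairs; the paper itself does not supply a proof of this proposition but simply cites \cite[Theorem~2.21]{Si:Cambridge11}, where precisely this Young-inequality argument appears. Your remarks about the universal constant $C_M=2$ and the need to approximate the Luxemburg infimum are on point.
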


If $v\in  \exp(L_2)(M)$, then $v^2\in \exp(L_1)(M)$ and $\|v^2\|_{\exp{L_1}}\leq C\|v\|_{\exp{L_2}}^2$. A polarization argument shows that if 
$v,w\in  \exp(L_2)(M)$, then $vw\in \exp(L_1)(M)$, and we have  
\begin{equation*}
 \|vw\|_{\exp(L_1)} \leq C \|v\|_{\exp(L_2)} \|w\|_{\exp(L_2)}.
\end{equation*}
 Combining this with Proposition~\ref{prop:Critical.Holder-Orlicz} shows that, if $u\in   \LlogL(M)$ and $v,w\in  \exp(L_2)(M)$, then $uvw\in L_1(M)$ with norm inequality, 
\begin{equation*}
 \|uvw\|_{L_1} \leq C_M \|u\|_{\LlogL} \|v\|_{\exp(L_2)} \|w\|_{\exp(L_2)}.
\end{equation*}

More generally, if $u\in \LlogL(M,\End(E))$ and $v,w\in  \exp(L_2)(M,E)$, then
\begin{equation*}
 \big|\scal{w(x)}{u(x)v(x)}_{E}\big|\leq \|u(x)\|_{\End(E)}\|v(x)\|_{E} \|w(x)\|_{E}\in L_1(M). 
\end{equation*}
Thus, $\scal{w(x)}{u(x)v(x)}_{E}\in L_1(M)$, and we have
\begin{equation*}
 \big\| \scal{w(x)}{u(x)v(x)}_{E}\big\|_{L_1} \leq C_{M}  \|u\|_{\LlogL} \|v\|_{\exp(L_2)} \|w\|_{\exp(L_2)}.
\end{equation*}
Therefore, if we denote by $\exp(L_2)(M,E)^*$ the anti-linear dual of $\exp(L_2)(M,E)$, then $u$ acts as a bounded operator $u: \exp(L_2)(M ,E)\rightarrow \exp(L_2)(M,E)^*$ such that
\begin{equation*}
 \acou{uv}{w}=\int_{M}\scal{w(x)}{u(x)v(x)}_{E_x}\sqrt{g(x)}dx, \qquad v,w\in \exp(L_2)(M,E). 
\end{equation*}
This operator depends continuously on $u$ as $u$ ranges over $ \LlogL(M,E)$. By duality the continuous embedding of $W_2^{n/2}(M,E)$ into $\exp(L_2)(M,E)$ provided by Corollary~\ref{cor:critical-Sobolev-embedME}  gives rise to a continuous embedding of $\exp(L_2)(M,E)^*$ into $W_2^{-n/2}(M,E)$. Therefore, we arrive at the following result.

\begin{proposition}\label{prop:Orlicz.boundedness-LlogL} 
 If $u\in \LlogL(M,\End(E))$, then $u$ gives rise to a bounded operator, 
\begin{equation*}
 u:W_2^{\frac{n}2}(M,E) \longrightarrow W_2^{-\frac{n}2}(M,E). 
\end{equation*}
This operator depends continuously on $u$  with respect to the $\LlogL$-norm. 
\end{proposition}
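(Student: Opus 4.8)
The goal is to show that multiplication by $u \in \LlogL(M,\End(E))$ extends to a bounded operator $W_2^{n/2}(M,E) \to W_2^{-n/2}(M,E)$, depending continuously on $u$ in the $\LlogL$-norm. The plan is essentially to read off the claim from the chain of estimates assembled just before the statement, organized as a duality argument bracketing the multiplication operator between the critical Sobolev embedding of Corollary~\ref{cor:critical-Sobolev-embedME} and its transpose.

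First I would record the sesquilinear form that $u$ induces: for $v,w \in C^\infty(M,E)$ set $B_u(v,w) = \int_M \scal{w(x)}{u(x)v(x)}_{E_x}\sqrt{g(x)}dx$. The pointwise bound $|\scal{w(x)}{u(x)v(x)}_{E_x}| \leq \|u(x)\|_{\End(E)}\|v(x)\|_E\|w(x)\|_E$, together with the iterated Orlicz--Hölder inequality derived above from Proposition~\ref{prop:Critical.Holder-Orlicz} (namely $\|uvw\|_{L_1}\leq C_M\|u\|_{\LlogL}\|v\|_{\exp(L_2)}\|w\|_{\exp(L_2)}$, applied with the scalar functions $\|v(\cdot)\|_E$ and $\|w(\cdot)\|_E$), gives
\begin{equation*}
 |B_u(v,w)| \leq C_M \|u\|_{\LlogL}\, \|v\|_{\exp(L_2)(M,E)}\, \|w\|_{\exp(L_2)(M,E)}.
\end{equation*}
Thus $B_u$ extends to a bounded sesquilinear form on $\exp(L_2)(M,E)\times \exp(L_2)(M,E)$ with norm $\leq C_M\|u\|_{\LlogL}$, i.e.\ $u$ defines a bounded operator $\exp(L_2)(M,E)\to \exp(L_2)(M,E)^*$ of norm $\leq C_M\|u\|_{\LlogL}$, and this assignment is linear and bounded — hence continuous — in $u$.

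Second, I would invoke Corollary~\ref{cor:critical-Sobolev-embedME}: the continuous inclusion $j: W_2^{n/2}(M,E)\hookrightarrow \exp(L_2)(M,E)$. Its anti-linear transpose $j^*: \exp(L_2)(M,E)^* \to (W_2^{n/2}(M,E))^* = W_2^{-n/2}(M,E)$ is then bounded (here I use the identification of $W_2^{-n/2}(M,E)$ with the anti-linear dual of $W_2^{n/2}(M,E)$ set up in the Sobolev-space subsection). Composing, the operator
\begin{equation*}
 u = j^* \circ \big(\exp(L_2)(M,E)\xrightarrow{\;u\;}\exp(L_2)(M,E)^*\big)\circ j : W_2^{n/2}(M,E)\longrightarrow W_2^{-n/2}(M,E)
\end{equation*}
is bounded, with operator norm $\leq \|j^*\|\,\|j\|\,C_M\|u\|_{\LlogL} = C'_M\|u\|_{\LlogL}$. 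Since $C^\infty(M,E)$ is dense in $W_2^{n/2}(M,E)$ and this extension agrees with pointwise multiplication on smooth sections, it is the unique continuous extension, which legitimizes denoting it again by $u$. The bound $\|u\|_{W_2^{n/2}\to W_2^{-n/2}}\leq C'_M\|u\|_{\LlogL}$ is exactly the asserted continuous dependence on $u$.

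I do not anticipate a genuine obstacle here: every analytic input — the Moser--Trudinger/Ozawa embedding (Proposition~\ref{prop:Orlicz.critical-Sobolev-embed}), its bundle version (Corollary~\ref{cor:critical-Sobolev-embedME}), and the Orlicz duality between $\exp(L_2)$ and $\LlogL$ via convex conjugacy (Proposition~\ref{prop:Critical.Holder-Orlicz} and the polarization remark) — has already been established in the excerpt, so the statement is a formal consequence. The only point requiring a little care is the bookkeeping of anti-linear versus linear duals and making sure the pairing $W_2^{-n/2}\times W_2^{n/2}\to\C$ used to identify the transpose $j^*$ is the same one extending the $L_2$ inner product; this is routine given the conventions fixed in the Sobolev subsection. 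A secondary minor point is justifying the reduction of the vector-valued Hölder estimate to the scalar functions $x\mapsto\|v(x)\|_E,\ \|w(x)\|_E\in\exp(L_2)(M)$, which is immediate from the definition of $\exp(L_2)(M,E)$ and $\LlogL(M,\End(E))$ via the fiberwise norms.
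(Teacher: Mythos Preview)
Your proposal is correct and follows essentially the same argument as the paper: the paper assembles precisely this chain of estimates in the paragraphs preceding the proposition, showing first that $u$ maps $\exp(L_2)(M,E)$ boundedly into its anti-linear dual via the Orlicz--H\"older inequality, and then composing with the critical Sobolev embedding $W_2^{n/2}\hookrightarrow\exp(L_2)$ and its transpose. Your added remarks on density and on matching the duality conventions are reasonable hygiene but not emphasized in the paper, which treats the proposition as an immediate consequence of the displayed estimates.
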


Combining this with the boundedness properties of \psidos, we obtain the following corollary. 

\begin{corollary}\label{cor:Orlicz.boundednessQuP}
 Let $P$ and $Q$ be operators in $\Psi^{-n/2}(M,E)$. If $u \in \LlogL(M,\End(E))$, then the composition $QuP$ is bounded on $L_2(M,E)$
and depends continuously on $u$ with respect to the $\LlogL$-norm. 
\end{corollary}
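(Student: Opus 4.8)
The plan is to combine Proposition~\ref{prop:Orlicz.boundedness-LlogL} with the standard mapping properties of pseudodifferential operators on Sobolev scales. The key observation is that $Q$ and $P$ are operators of order $-n/2$, which is exactly the order needed to bridge the gap between $W_2^{n/2}$ and $W_2^{-n/2}$ that $u$ acts across. First I would recall that, for any $P\in\Psi^{-n/2}(M,E)$ and any $s\in\R$, the operator $P$ extends to a bounded map $P:W_2^{s}(M,E)\rightarrow W_2^{s+n/2}(M,E)$ (this was noted in the discussion preceding the statement). Applying this with $s=0$ gives a bounded map
\begin{equation*}
 P:L_2(M,E)\longrightarrow W_2^{\frac{n}{2}}(M,E).
\end{equation*}
Next, Proposition~\ref{prop:Orlicz.boundedness-LlogL} gives that $u$ acts as a bounded operator $u:W_2^{n/2}(M,E)\rightarrow W_2^{-n/2}(M,E)$, with operator norm controlled by $\|u\|_{\LlogL}$. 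Finally, applying the pseudodifferential mapping property to $Q$ with $s=-n/2$ yields a bounded map
\begin{equation*}
 Q:W_2^{-\frac{n}{2}}(M,E)\longrightarrow L_2(M,E).
\end{equation*}

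Composing these three bounded maps, $QuP:L_2(M,E)\rightarrow L_2(M,E)$ is bounded, and by the multiplicativity of operator norms,
\begin{equation*}
 \|QuP\|_{L_2\rightarrow L_2}\leq \|Q\|_{W_2^{-n/2}\rightarrow L_2}\cdot\|u\|_{W_2^{n/2}\rightarrow W_2^{-n/2}}\cdot\|P\|_{L_2\rightarrow W_2^{n/2}}.
\end{equation*}
The first and third factors are fixed constants depending only on $Q$ and $P$, and the middle factor is bounded by $C_M\|u\|_{\LlogL}$ according to the continuity assertion in Proposition~\ref{prop:Orlicz.boundedness-LlogL}. Hence $\|QuP\|\leq C_{PQ}\|u\|_{\LlogL}$, which in particular gives the continuous dependence on $u$ with respect to the $\LlogL$-norm (the map $u\mapsto QuP$ being linear, boundedness of this estimate is equivalent to continuity).

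There is essentially no obstacle here; the corollary is a bookkeeping exercise once Proposition~\ref{prop:Orlicz.boundedness-LlogL} is in hand. The only point requiring a word of care is that one must quote the correct Sobolev mapping property for $\Psi^{-n/2}(M,E)$ — namely that it shifts the Sobolev index by exactly $+n/2$ on the full scale $s\in\R$, including negative $s$ — which is standard and was recalled in the subsection on Sobolev spaces. I would state the proof in three lines: factor $QuP$ as $L_2\xrightarrow{P}W_2^{n/2}\xrightarrow{u}W_2^{-n/2}\xrightarrow{Q}L_2$, invoke Proposition~\ref{prop:Orlicz.boundedness-LlogL} for the middle arrow and the \psido\ continuity for the outer two, and read off both the boundedness of $QuP$ and its continuous dependence on $u$ from the composition.
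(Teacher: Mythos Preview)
Your proof is correct and is exactly the argument the paper has in mind: the paper simply states that the corollary follows by ``combining this with the boundedness properties of \psidos,'' which is precisely your factorization $L_2\xrightarrow{P}W_2^{n/2}\xrightarrow{u}W_2^{-n/2}\xrightarrow{Q}L_2$ using Proposition~\ref{prop:Orlicz.boundedness-LlogL} for the middle arrow.
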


\subsection{Cwikel-type estimates} The original estimates of Cwikel~\cite{Cw:AM77} imply that if $f\in L_p(\R^n)$, $n\geq 3$, with $p>1$, then the operator $(1+\Delta)^{-n/2p}f(1+\Delta)^{-n/2p}$ is in the weak trace class $\sL_{p,\infty}$, and we have 
\begin{equation*}
 \big\| (1+\Delta)^{-\frac{n}{4}}f(1+\Delta)^{-\frac{n}{4}}\big\|_{p,\infty} \leq C_n \|f\|_{L_p}. 
\end{equation*}
We shall now explain how to extend these estimates to operators of the form $QuP$ where $P,Q\in \Psi^{-n/2}(M,E)$ and $u \in \LlogL(M,E)$. Recall that such operators are bounded by Corollary~\ref{cor:Orlicz.boundednessQuP}. 

The starting point is the following specific Cwikel-type estimate on the torus $\T^n=\R^n/\Z^n$. 

\begin{proposition}[Sukochev-Zanin~\cite{SZ:arXiv21}; see also~\cite{So:PLMS95}] \label{prop:Critical.Cwikel} 
 If $f\in \LlogL(\T^n)$, then $(1+\Delta)^{-n/4}f(1+\Delta)^{-n/4}$ is in the weak trace class $\sL_{1,\infty}$, and we have 
\begin{equation*}
 \big\| (1+\Delta)^{-\frac{n}{4}}f(1+\Delta)^{-\frac{n}{4}}\big\|_{\sL_{1,\infty}} \leq C_n  \|f\|_{\LlogL}. 
\end{equation*}
\end{proposition}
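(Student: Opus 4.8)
The statement to prove is Proposition~\ref{prop:Critical.Cwikel}: for $f\in\LlogL(\T^n)$ the operator $(1+\Delta)^{-n/4}f(1+\Delta)^{-n/4}$ lies in $\sL_{1,\infty}$ with $\|(1+\Delta)^{-n/4}f(1+\Delta)^{-n/4}\|_{1,\infty}\le C_n\|f\|_{\LlogL}$. Since the proposition is explicitly attributed to Sukochev--Zanin~\cite{SZ:arXiv20} (with antecedents in Solomyak~\cite{So:PLMS95}), the plan is not to reprove it from scratch but to derive it, or recall how it is derived, from the ``specific'' Cwikel-type estimate proved there. Concretely, I would first reduce to the case $f\ge 0$ by splitting $f$ into real and imaginary, then positive and negative parts, each of which has $\LlogL$-norm controlled by $\|f\|_{\LlogL}$, and using the quasi-triangle inequality for $\|\cdot\|_{1,\infty}$ (i.e. Ky Fan's inequality~(\ref{eq:Quantized.properties-mun2})) to recombine. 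For $f\ge0$ one writes $(1+\Delta)^{-n/4}f(1+\Delta)^{-n/4}=T^*T$ with $T=f^{1/2}(1+\Delta)^{-n/4}$, so it suffices to show $T\in\sL_{2,\infty}$ with $\|T\|_{2,\infty}^2\le C_n\|f\|_{\LlogL}$; this is the standard Cwikel-type / Birman--Solomyak ``$g(x)h(-i\partial)$'' setup on the torus.

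The heart of the matter — the step I expect to be the genuine obstacle, and the one that genuinely requires the input of~\cite{SZ:arXiv20} — is the endpoint ($p=1$, logarithmic) Cwikel estimate itself: controlling $\mu_j\big(f^{1/2}(1+\Delta)^{-n/4}\big)=\op{O}(j^{-1/2})$ by $\|f\|_{\LlogL}^{1/2}$. The natural route is the piecewise/dyadic decomposition on Fourier side used by Birman--Solomyak and Solomyak: decompose $(1+\Delta)^{-n/4}=\sum_{k\ge0}\phi_k(-i\partial)$ into dyadic Littlewood--Paley pieces with $\operatorname{supp}\phi_k\subset\{|\xi|\sim 2^k\}$, so that each piece $f^{1/2}\phi_k(-i\partial)$ is Hilbert--Schmidt with $\|f^{1/2}\phi_k(-i\partial)\|_2^2\lesssim 2^{-kn/2}\cdot 2^{kn/2}\cdot\|f^{1/2}\|_2^2\sim\|f\|_{L_1}$ uniformly in $k$ (on the torus the number of lattice points in an annulus of radius $2^k$ is $\sim 2^{kn}$, and the symbol there is $\sim 2^{-kn/2}$). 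The pieces individually give only $\sum 2^{-kn/4}$-type bounds and are too lossy; the point of Solomyak's argument, sharpened in~\cite{SZ:arXiv20}, is an \emph{interpolation/rearrangement} argument combining the uniform $L_1$-control of each $\|f^{1/2}\phi_k(-i\partial)\|_2$ with an $L_p$ ($p>1$) bound that degenerates as $p\to1$ at a rate matched exactly by the $\LlogL$-gauge, yielding the borderline $\sL_{2,\infty}$ estimate with the logarithmic loss absorbed into the Orlicz norm. I would cite this as the content of~\cite[the specific Cwikel estimate]{SZ:arXiv20} rather than redo it.

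Having the $\sL_{2,\infty}$ estimate for $T=f^{1/2}(1+\Delta)^{-n/4}$, the conclusion is immediate: $T^*T\in\sL_{1,\infty}$ with $\|T^*T\|_{1,\infty}\le\|T\|_{2,\infty}^2$ since $\mu_j(T^*T)=\mu_j(T)^2$, and then
\begin{equation*}
 \big\|(1+\Delta)^{-\frac{n}{4}}f(1+\Delta)^{-\frac{n}{4}}\big\|_{1,\infty}\le 4\,\big\|(1+\Delta)^{-\frac{n}{4}}f_+(1+\Delta)^{-\frac{n}{4}}\big\|_{1,\infty}+\cdots\le C_n\|f\|_{\LlogL},
\end{equation*}
recombining the four sign-parts via~(\ref{eq:Quantized.properties-mun2}) and the homogeneity/monotonicity of $\|\cdot\|_{\LlogL}$ under $|f_\pm|\le|f|$. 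One technical caveat to address: $(1+\Delta)^{-n/4}fg$-type products only make literal sense once $f\in L_1\supset\LlogL(\T^n)$ defines a form-bounded perturbation; on the torus this is clean because $\LlogL(\T^n)\subset L_1(\T^n)$ and $(1+\Delta)^{-n/4}$ maps $L_2$ into $W_2^{n/2}(\T^n)\subset\exp(L_2)(\T^n)$ (Proposition~\ref{prop:Orlicz.critical-Sobolev-embed}), so $QuP$ is already bounded by the same Hölder-in-Orlicz argument used for Corollary~\ref{cor:Orlicz.boundednessQuP}; I would invoke that to make the operator well-defined before estimating its singular values.
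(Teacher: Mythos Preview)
The paper does not prove Proposition~\ref{prop:Critical.Cwikel} at all: it is stated as a result of Sukochev--Zanin~\cite{SZ:arXiv20} (with antecedents in Solomyak~\cite{So:PLMS95}) and used as a black-box input. Everything downstream in the paper (Corollary~\ref{cor:Orlicz.Cwikel-Tn-PsiDOs}, Lemma~\ref{lem:Orlicz.Cwikel-chart}, Proposition~\ref{prop:Orlicz.Cwikel-ME-PsiDOs}) is built on top of this imported estimate via routine \psido\ and localization arguments.

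Your proposal correctly recognizes this --- you explicitly say the plan is ``not to reprove it from scratch'' and that the key step ``genuinely requires the input of~\cite{SZ:arXiv20}'' --- and then goes further than the paper by sketching how the Sukochev--Zanin argument runs (reduction to $f\ge 0$, factoring as $T^*T$ with $T=f^{1/2}(1+\Delta)^{-n/4}$, dyadic Littlewood--Paley decomposition on the Fourier side, and an interpolation/rearrangement argument matched to the $\LlogL$ gauge). That outline is a reasonable summary of the Birman--Solomyak/Solomyak/Sukochev--Zanin methodology, but note that it is \emph{more} than what the paper itself does: the paper simply cites the result. So there is nothing to compare approach-wise; your sketch is supplementary exposition, not a competing proof. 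If your aim is to match the paper, the correct ``proof'' here is: \emph{this is \cite{SZ:arXiv20}}.
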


There is no difficulty to extend the above result to pseudodifferential operators. 

\begin{corollary}\label{cor:Orlicz.Cwikel-Tn-PsiDOs} 
 Let $P$ and $Q$ be operators in $\Psi^{-n/2}(\T^n)$. If $f \in \LlogL(\T^n)$, then the operator $QfP$ is in the weak trace class $\sL_{1,\infty}$, and we have 
\begin{equation}
 \big\| QfP\big\|_{\sL_{1,\infty}} \leq C_n \|P\|_{\sL(L_2, W_2^{{n}/{2}})}\|Q\|_{\sL(L_2, W_2^{{n}/{2}})}  \|f\|_{\LlogL}. 
 \label{eq:Orlicz.Cwikel-Tn-PsiDOs} 
\end{equation}
\end{corollary}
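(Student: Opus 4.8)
The plan is to obtain \eqref{eq:Orlicz.Cwikel-Tn-PsiDOs} from its special case $P=Q=(1+\Delta)^{-n/4}$, which is exactly Proposition~\ref{prop:Critical.Cwikel}, by a one-line factorization together with the fact that $\sL_{1,\infty}$ is an operator ideal. Since $(1+\Delta)^{n/4}\in\Psi^{n/2}(\T^n)$ and $P,Q\in\Psi^{-n/2}(\T^n)$, the operators
\begin{equation*}
 A:=Q(1+\Delta)^{\frac{n}{4}}, \qquad B:=(1+\Delta)^{\frac{n}{4}}P
\end{equation*}
are classical \psidos\ of order $0$. By the Sobolev mapping property of \psidos\ recalled above they are bounded on $L_2(\T^n)$: indeed $(1+\Delta)^{n/4}:L_2\to W_2^{-n/2}$ and $Q:W_2^{-n/2}\to L_2$ are bounded, and dually for $B$. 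Moreover, straight from the definition $\|u\|_{W_2^{\pm n/2}}=\|(1+\Delta)^{\pm n/4}u\|_{L_2}$ of the Sobolev norms one reads off $\|B\|_{\sL(L_2)}=\|P\|_{\sL(L_2,W_2^{n/2})}$, while $\|A\|_{\sL(L_2)}=\|Q\|_{\sL(W_2^{-n/2},L_2)}=\|Q^*\|_{\sL(L_2,W_2^{n/2})}$; in particular both are constants depending only on $P$, respectively $Q$ (and for $Q=P^*$ the second one is just $\|P\|_{\sL(L_2,W_2^{n/2})}$).

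Because $(1+\Delta)^{n/4}$ and $(1+\Delta)^{-n/4}$ are inverse to one another we have $P=(1+\Delta)^{-n/4}B$ and $Q=A(1+\Delta)^{-n/4}$, whence the factorization
\begin{equation*}
 QfP \;=\; A\,\Big[(1+\Delta)^{-\frac{n}{4}}f\,(1+\Delta)^{-\frac{n}{4}}\Big]\,B .
\end{equation*}
By Proposition~\ref{prop:Critical.Cwikel} the operator in brackets lies in $\sL_{1,\infty}$ and its quasi-norm is at most $C_n\|f\|_{\LlogL}$. Feeding this into the ideal inequality $\mu_j(ATB)\le\|A\|\,\mu_j(T)\,\|B\|$ from \eqref{eq:Quantized.properties-mun3} — which, by the definition \eqref{def:lp_infty_quasinorm} of the quasi-norm, upgrades to $\|ATB\|_{\sL_{1,\infty}}\le\|A\|\,\|T\|_{\sL_{1,\infty}}\,\|B\|$ — gives at once that $QfP\in\sL_{1,\infty}$ together with the bound \eqref{eq:Orlicz.Cwikel-Tn-PsiDOs}, the constant being $C_n\|A\|_{\sL(L_2)}\|B\|_{\sL(L_2)}$ with $\|A\|_{\sL(L_2)}$, $\|B\|_{\sL(L_2)}$ as identified above.

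I do not expect any genuine obstacle: all of the analysis is already packaged into Proposition~\ref{prop:Critical.Cwikel}, and what remains is the purely formal reduction above. The only points needing (routine) verification are the $L_2$-boundedness of the order-$0$ operators $A$ and $B$ — immediate from the standard mapping properties of \psidos — and the elementary bookkeeping identifying $\|A\|_{\sL(L_2)}$ and $\|B\|_{\sL(L_2)}$ with the operator norms appearing on the right-hand side of \eqref{eq:Orlicz.Cwikel-Tn-PsiDOs}. The same factorization also makes the meaning of the composition $QfP$ transparent, since $A,B$ are bounded and the middle factor is precisely the operator handled by Proposition~\ref{prop:Critical.Cwikel}.
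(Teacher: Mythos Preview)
Your proof is correct and follows essentially the same approach as the paper: factor $QfP=A\,(1+\Delta)^{-n/4}f(1+\Delta)^{-n/4}\,B$ with $A=Q(1+\Delta)^{n/4}$ and $B=(1+\Delta)^{n/4}P$ order-zero, then apply Proposition~\ref{prop:Critical.Cwikel} and the ideal inequality. Your identification $\|A\|_{\sL(L_2)}=\|Q^*\|_{\sL(L_2,W_2^{n/2})}$ is in fact more precise than the paper's statement (which writes $\|Q\|_{\sL(L_2,W_2^{n/2})}$ via the identification $\|Q\|_{\sL(W_2^{-n/2},L_2)}=\|Q\|_{\sL(L_2,W_2^{n/2})}$), but this is immaterial for all subsequent uses.
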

\begin{proof}
 The operator $A=(1+\Delta)^{n/4}P$ and $B=Q(1+\Delta)^{n/4}$ are zeroth order \psidos, and hence are bounded on $L_2(M,E)$. As $QfP=B(1+\Delta)^{-n/4}f(1+\Delta)^{-n/4}A$, by using Proposition~\ref{prop:Critical.Cwikel}  we deduce that $QfP\in \sL_{1,\infty}$, and we have 
\begin{align*}
  \big\| QfP\big\|_{\sL_{1,\infty}} & \leq \|B\| \big\| (1+\Delta)^{-\frac{n}{4}}f(1+\Delta)^{-\frac{n}{4}}\big\|_{\sL_{1,\infty}} \|A\|\\ 
  & \leq C_n\|A\|_{\sL(L_2)} \|B\|_{\sL(L_2)}  \|f\|_{\LlogL}.  
\end{align*}
Note that
\begin{gather*}
 \|A\|_{\sL(L_2)} =\big\|(1+\Delta)^{n/4}P\big\|_{\sL(L_2)} =\|P\|_{\sL(L_2, W_2^{{n}/{2}})},\\
 \|B\|_{\sL(L_2)} =\|Q(1+\Delta)^{n/4}\|_{\sL(L_2)} =\|Q\|_{\sL(W_2^{-{n}/{2}} ,L_2)}= \|Q\|_{\sL(L_2, W_2^{{n}/{2}})}. 
\end{gather*}
This gives the estimate~(\ref{eq:Orlicz.Cwikel-Tn-PsiDOs}). The proof is complete. 
\end{proof}

As \psidos\ are (pseudo-)local objects it's a routine argument to extend results on a given closed manifold to all such manifolds.

From now on, we let $M^n$ be a closed Riemannian manifold and $E^r$ a Hermitian vector bundle over $M$. In the following, given any compact $K\subset M$, we denote by $\Psi_K^m(M,E)$, $m\in \R$, the class of operators $P\in \Psi^m(M,E)$ whose Schwartz kernels are supported in $K\times K$. We also denote by 
$L_{2,K}(M,E)$ (resp., $\LlogL_K(M,\End(E))$) the sections $u$ in $L_2(M,E)$ (resp., $\LlogL(M,\End(E))$) such that $\supp u\subset K$. 

Corollary~\ref{cor:Orlicz.Cwikel-Tn-PsiDOs} admits the following local version.

\begin{lemma}\label{lem:Orlicz.Cwikel-chart} 
 Suppose that $K$ is a compact subset of $M$ which is contained in the domain of a local chart of $M$ over which $E$ is trivial. Let $P$ and $Q$ be operators in $\Psi_K^{-n/2}(M,E)$. If $u\in \LlogL_K(M,\End(E))$, then $QuP$ is in the weak trace class $\sL_{1,\infty}(L^2(M,E))$, and we have
\begin{equation}
 \big\|QuP\big\|_{1,\infty} \leq C_{KPQ} \|u\|_{\LlogL}.
 \label{eq:Orlicz.Cwikel-chart}  
\end{equation}
\end{lemma}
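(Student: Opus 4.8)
The plan is to transfer the torus estimate of Corollary~\ref{cor:Orlicz.Cwikel-Tn-PsiDOs} to the chart setting by a diffeomorphism argument, exploiting the fact that a compact subset $K$ of a single chart domain over which $E$ is trivial ``looks like'' a compact subset of $\T^n$ after pushing forward. First I would fix a local chart $\kappa:U\rightarrow V\subseteq\R^n$ with $K\subseteq U$ and a trivialization $\tau:E_{|U}\rightarrow U\times\C^r$. Since $K$ is compact, after composing with a translation and dilation I may assume $\kappa(K)$ is contained in a small cube, hence in a fundamental domain of $\T^n=\R^n/\Z^n$; this identifies a neighborhood of $\kappa(K)$ with an open subset of $\T^n$. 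Via $(\kappa\circ\tau)_*$ the operators $P,Q\in\Psi_K^{-n/2}(M,E)$ become operators $\tilde P,\tilde Q\in\Psi^{-n/2}(\T^n,\C^r)$ (with kernels supported near $\kappa(K)\times\kappa(K)$), the potential $u$ becomes $\tilde u\in\LlogL(\T^n,M_r(\C))$, and $QuP$ is carried to $\tilde Q\tilde u\tilde P$. One then identifies $\Psi^{-n/2}(\T^n,\C^r)$ with $M_r\big(\Psi^{-n/2}(\T^n)\big)$ and applies Corollary~\ref{cor:Orlicz.Cwikel-Tn-PsiDOs} entrywise (absorbing the finite $r^2$ sum into the constant), to get $\tilde Q\tilde u\tilde P\in\sL_{1,\infty}(L_2(\T^n,\C^r))$ with $\|\tilde Q\tilde u\tilde P\|_{1,\infty}\leq C\,\|\tilde u\|_{\LlogL}$.

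The second ingredient is comparability of the relevant Hilbert spaces and norms under the pushforward. The map $(\kappa\circ\tau)_*$, suitably normalized, restricts to a topological isomorphism between $L_{2,K}(M,E)$ and the subspace of $L_2(\T^n,\C^r)$ (resp.\ of $L_2(\R^n)\otimes\C^r$) of sections supported near $\kappa(K)$; it need not be an isometry because of the Jacobian of $\kappa$ and the metric on $E$, but the Jacobian is bounded above and below on the compact set $K$, so the induced map is a Hilbert-space embedding in the sense of Subsection~\ref{subsec:schatten}. By Proposition~\ref{prop:App.eigenvalues} the pushforward is then a quasi-Banach embedding $\sL_{1,\infty}\rightarrow\sL_{1,\infty}$, so the singular-value estimate obtained on $\T^n$ transfers back to $L_2(M,E)$ at the cost of a constant depending only on $K$ (through $\sup_K$ and $\inf_K$ of the Jacobian and the bundle metric). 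Likewise, $\|\tilde u\|_{\LlogL(\T^n)}\leq C_K\|u\|_{\LlogL(M,\End(E))}$ because the $\LlogL$-norms on $M$ and on $\T^n$, defined through the partition-of-unity formula~(\ref{eq:Orlicz.Orlicz-norm-ME}) and through~(\ref{eq:Orlicz.Orlicz-norm}) on $\R^n$, are comparable on sections supported in $K$—again only boundedness of the change of coordinates is used.

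Assembling these pieces: $QuP = \iota_* \big(\tilde Q \tilde u \tilde P\big)$ up to the left-inverse correction in~(\ref{eq:App.embedding-sLsH}), and hence
\begin{equation*}
 \big\|QuP\big\|_{1,\infty} \leq c\,\big\|\tilde Q\tilde u\tilde P\big\|_{1,\infty} \leq c\,C_n r^2 \|\tilde P\|\|\tilde Q\|\,\|\tilde u\|_{\LlogL} \leq C_{KPQ}\,\|u\|_{\LlogL},
\end{equation*}
where the operator norms of $\tilde P,\tilde Q$ as maps $L_2\rightarrow W_2^{n/2}$ are finite because $\tilde P,\tilde Q\in\Psi^{-n/2}(\T^n,\C^r)$, and everything has been absorbed into a constant depending only on $K$, $P$, $Q$. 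This is the claimed estimate~(\ref{eq:Orlicz.Cwikel-chart}).

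The main obstacle I anticipate is purely bookkeeping rather than conceptual: making precise that an operator in $\Psi_K^{-n/2}(M,E)$, when pushed forward through a chart and then viewed on $\T^n$ after cutting off away from $\kappa(K)$, is genuinely a classical $\psi$DO of order $-n/2$ on $\T^n$ (one must check the symbol remains in the right class and that the kernel, being supported in $K\times K$, extends by zero without destroying the $\psi$DO property), and keeping track of which constants depend on $K$ versus on $P,Q$. The $\LlogL$-norm comparison under the change of variables is also a place to be slightly careful, since Orlicz norms are not additive, but since we only compare norms of the \emph{same} function in two bi-Lipschitz-equivalent measures this follows directly from the definition~(\ref{eq:Orlicz.Orlicz-norm}) of the Luxemburg norm. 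None of this requires new analytic input beyond Corollary~\ref{cor:Orlicz.Cwikel-Tn-PsiDOs} and Proposition~\ref{prop:App.eigenvalues}.
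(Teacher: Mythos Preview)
Your proposal is correct and follows essentially the same route as the paper's proof: push the compactly supported data through the chart and trivialization to $\T^n$, apply Corollary~\ref{cor:Orlicz.Cwikel-Tn-PsiDOs} there, and transfer the singular-value estimate back via Proposition~\ref{prop:App.eigenvalues}. If anything, you are slightly more explicit than the paper about two points it leaves implicit---the entrywise reduction to the scalar case of Corollary~\ref{cor:Orlicz.Cwikel-Tn-PsiDOs} for $\C^r$-valued symbols, and the comparability of the $\LlogL$-norms under the bi-Lipschitz change of variables---but the overall architecture and the key inputs are identical.
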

\begin{proof}
As $\R^n$ can be smoothly embedded as an open set of $\T^n$, the assumptions ensure us there exist an open set $U\subset M$ containing $K$ over which $E$ is trivializable and a smooth diffeomorphism $\phi:U\rightarrow V$ where $V$ is an open set of $\T^n$. Set $K'=\phi(K)$, and let
$\tau:E_{|U}\rightarrow U\times \C^r$ be a smooth trivialization of $E$ over $U$. We then have the following continuous linear embeddings of Hilbert spaces,  
\begin{equation*}
 L_2(M,E) \xleftarrow[(\iota_U)_*]{~} L_2(U,E_{|U})\xrightarrow[~\tau_*~]{\sim}L_{2,K}(U)\otimes \C^r \xrightarrow[~\phi_*~]{\sim} L_{2,K'}(V)\otimes \C^r \xrightarrow[(\iota_V)_*]{~}L_2(\T^n)\otimes \C^r. 
\end{equation*}
Here $\tau_*:L_{2,K}(U,E)\otimes \C^r\rightarrow L_{2,K}(U)\otimes \C^r$ is the pushforward-isomorphism given by the trivialization $\tau$. The linear embeddings $(\iota_U)_*:L_{2,K}(U, E)\rightarrow L_2(M,E)$ and $(\iota_V)_*:L_{2,K'}(V)\otimes \C^r\rightarrow L_2(\T^n)\otimes \C^r$ arise from the inclusions $U\subset M$ and $V\subset \T^n$, respectively. They are just given by the extension to zero outside $K$ and $K'$, and hence are isometric embeddings. Therefore, by Proposition~\ref{prop:App.eigenvalues} in Appendix there is $c>0$ such that,  for any compact operator $A$ on $L_{2,K}(U,E)$,  we have 
\begin{equation}
 \mu_j\big((\iota_U)_*A)=\mu_j(A)\leq c\mu_j\big((\iota_V)_*\phi_*\tau_*A\big) \qquad \forall j\geq 0.
 \label{eq:Orlicz.estimates-sing-values}  
\end{equation}

Set $\tilde{P}=(\iota_V)_*\phi_*\tau_*(P_{|U})$ and $\tilde{Q}=(\iota_V)_*\phi_*\tau_*(Q_{|U})$. They both are operators in $\Psi^{-n/2}(\T^n)$. If $\psi\in C^\infty_c(V)$ is such that $\psi=1$ near $K'$, then $\tilde{P}=\psi(\phi_*\tau_*(P_{|U}))\psi$ and $\tilde{Q}=\psi(\phi_*\tau_*(Q_{|U}))\psi$. 
If $u\in \LlogL_K(M, \End(E))$, then $\tilde{u}:=(\iota_V)_*\phi_*\tau_*(u_{|U})\in \LlogL_{K'}(\T^n)\otimes M_r(\C)$, and so Corollary~\ref{cor:Orlicz.Cwikel-Tn-PsiDOs}  ensures that $\tilde{Q}\tilde{u}\tilde{P}\in \sL_{1,\infty}(L^2(\T^n)\otimes \C^r)$, and we have
\begin{equation*}
 \big\|\tilde{Q}\tilde{u} \tilde{P}\big\|_{1,\infty} \leq C_{\tilde{P}\tilde{Q}}\|\tilde{u}\|_{\LlogL} \leq C_{KPQ} \|u\|_{\LlogL}. 
\end{equation*}
Moreover, as $PuQ=(\iota_U)_*(Q_{|U}u_{|U}P_{|U})$ and $\tilde{Q}\tilde{u}\tilde{P}=(\iota_V)_*\phi_*\tau_*(Q_{|U}u_{|U}P_{|U})$, by~(\ref{eq:Orlicz.estimates-sing-values}) we have
\begin{equation*}
 \mu_j(PuQ)\leq c\mu_j(\tilde{Q}\tilde{u}\tilde{P})\qquad \forall j\geq 0. 
\end{equation*}
It then follows that $QuP$ is in the weak Schatten class $\sL_{1,\infty}(L(M,E))$, and the estimate~(\ref{eq:Orlicz.Cwikel-chart}) holds. The proof is complete. 
\end{proof}

\begin{remark}
 If $n$ is even, then Lemma~\ref{lem:Orlicz.Cwikel-chart} can also be deduced from the Cwikel-type estimates of Solomyak~\cite{So:PLMS95}.
\end{remark}

\begin{remark}
 Lemma~\ref{lem:Orlicz.Cwikel-chart} continues to holds if $M$ is not compact. 
\end{remark}

\begin{lemma}\label{lem:Orlicz.Cwikel-smoothing} 
Assume $M$ is a closed manifold. Let $P$ and $Q$ be operators in $\Psi^{-n/2}(M,E)$ such that one of them is smoothing. If $u\in \LlogL(M,\End(E))$, then $QuP$ is in every weak Schatten class $\sL_{p,\infty}$, $p>0$, and we have
\begin{equation}
 \| QuP\|_{p,\infty} \leq C_{PQp} \|u\|_{\LlogL}. 
 \label{eq:Orlicz.Cwikel-PQ-smoothing} 
\end{equation}
\end{lemma}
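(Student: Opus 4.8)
The plan is to reduce to the local case already handled by Lemma~\ref{lem:Orlicz.Cwikel-chart} together with a simple observation that a smoothing operator composed with anything bounded lands in every Schatten class. First I would use a partition of unity to localize. Pick a finite open cover $(U_i)_{1\leq i\leq N}$ of $M$ by coordinate charts over which $E$ is trivial, together with a subordinate smooth partition of unity $(\varphi_i)$ and auxiliary functions $(\psi_i)\subset C_c^\infty(U_i)$ with $\psi_i=1$ near $\supp\varphi_i$. Writing $u=\sum_i \varphi_i u$ and inserting the $\psi_i$, it suffices by the quasi-triangle inequality for $\|\cdot\|_{p,\infty}$ to bound each piece $Q(\varphi_i u)P$; and since $u\mapsto \varphi_i u$ is bounded $\LlogL(M,\End E)\to \LlogL_{K_i}(M,\End E)$ with $K_i=\supp\varphi_i$, it is enough to treat $Q\tilde u P$ for $\tilde u\in \LlogL_K(M,\End E)$ with $K$ compact inside a single chart.

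Next comes the key point that distinguishes this lemma from Lemma~\ref{lem:Orlicz.Cwikel-chart}: one of $P$, $Q$ is smoothing. Say $P$ is smoothing (the other case is symmetric, or follows by taking adjoints since $\mu_j(QuP)=\mu_j(P^*u^*Q^*)$). Then $P$ maps $W_2^{-s}(M,E)\to W_2^{t}(M,E)$ continuously for all $s,t$. In particular, choosing a large integer $m$, the operator $(1+\Delta_E)^{m}P$ is still smoothing, hence bounded on $L_2$, so $P=(1+\Delta_E)^{-m}\big((1+\Delta_E)^{m}P\big)$ with $(1+\Delta_E)^{-m}\in\sL_{p,\infty}$ for $p=n/(2m)$, which can be made as small as we like by taking $m$ large. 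The factorization $Q\tilde u P = \big(Q\tilde u (1+\Delta_E)^{-m}\big)\big((1+\Delta_E)^{m}P\big)$ then expresses our operator as a product of a bounded operator with something whose Schatten membership I control. But $Q\tilde u (1+\Delta_E)^{-m}$ is still of the form handled by Lemma~\ref{lem:Orlicz.Cwikel-chart} after absorbing $(1+\Delta_E)^{-m}$: actually it is cleaner to write $Q\tilde uP$ directly. Since $Q\in\Psi^{-n/2}$ and $P$ is smoothing, I localize as above so that $P,Q\in\Psi_K^{-n/2}(M,E)$; then $Q\tilde u P\in\sL_{1,\infty}$ by Lemma~\ref{lem:Orlicz.Cwikel-chart} with the bound $C_{KPQ}\|\tilde u\|_{\LlogL}$, giving $p=1$. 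To improve this to arbitrary $p>0$, use instead the factorization $Q\tilde u P=(Q\tilde u P')(1+\Delta_E)^{-m}$ where $P'=(1+\Delta_E)^{m}P$ is smoothing of order $-n/2$, so $Q\tilde uP'\in\sL_{1,\infty}$ by Lemma~\ref{lem:Orlicz.Cwikel-chart} while $(1+\Delta_E)^{-m}\in\sL_{n/(2m),\infty}$; by the Hölder inequality for weak Schatten classes (the ideal property, more precisely $\mu_{2j}(ST)\leq\mu_j(S)\mu_j(T)$, which gives $ST\in\sL_{r,\infty}$ with $1/r=1/1+2m/n$), we conclude $Q\tilde uP\in\sL_{r,\infty}$ with $r=n/(n+2m)$, which is $<p$ once $m$ is large enough, and $\sL_{r,\infty}\subset\sL_{p,\infty}$. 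Reassembling the finitely many localized pieces and tracking the constants yields~(\ref{eq:Orlicz.Cwikel-PQ-smoothing}).

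The only mildly delicate bookkeeping is the dependence of the constants on the partition of unity and the quasi-norm constant of $\sL_{p,\infty}$ (which depends on $p$ but is harmless), together with checking that $u\mapsto\varphi_i u$ and the pushforward maps are bounded on the relevant Orlicz spaces — but this is exactly the content of the norm~(\ref{eq:Orlicz.Orlicz-norm-ME}) and was already used in the proof of Lemma~\ref{lem:Orlicz.Cwikel-chart}. I expect the main obstacle, such as it is, to be purely organizational: making sure the smoothing hypothesis is exploited at the right moment (to gain an extra factor in a tiny Schatten class) rather than trying to push $\sL_{1,\infty}$ estimates around, and handling the two cases ($P$ smoothing vs.\ $Q$ smoothing) uniformly, most economically by the adjoint symmetry $\mu_j(QuP)=\mu_j((QuP)^*)=\mu_j(P^*u^*Q^*)$ together with the fact that $u^*\in\LlogL(M,\End E)$ with the same norm.
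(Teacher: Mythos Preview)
Your route through localization and Lemma~\ref{lem:Orlicz.Cwikel-chart} is both unnecessarily elaborate and contains a genuine gap, while the paper's proof is a two-line argument using only Corollary~\ref{cor:Orlicz.boundednessQuP}. Assuming $Q$ is smoothing, the paper simply writes
\[
QuP=\big(Q(1+\Delta_E)^{n/4}\big)\cdot\big((1+\Delta_E)^{-n/4}uP\big);
\]
the first factor is smoothing and therefore lies in every $\sL_{p,\infty}$, while the second factor is bounded on $L_2(M,E)$ with norm $\leq C\|u\|_{\LlogL}$ by Corollary~\ref{cor:Orlicz.boundednessQuP}. The ideal property of $\sL_{p,\infty}$ finishes. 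No partition of unity, no Lemma~\ref{lem:Orlicz.Cwikel-chart}, no H\"older. You in fact wrote down essentially this factorization (``a product of a bounded operator with something whose Schatten membership I control'') and then walked away from it.

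The gap in the path you chose instead is the sentence ``I localize as above so that $P,Q\in\Psi_K^{-n/2}(M,E)$.'' Localizing $u$ gives only $Q\tilde uP'=(Q\psi)\tilde u(\psi P')$ with $\psi=1$ on $K$; neither $Q\psi$ nor $\psi P'$ has Schwartz kernel supported in $K\times K$, so Lemma~\ref{lem:Orlicz.Cwikel-chart} does not apply. To genuinely localize the non-smoothing operator $Q$, you must split $Q\psi=\psi'Q\psi+(1-\psi')Q\psi$, and the remainder $(1-\psi')Q\psi$ is smoothing---so the term $(1-\psi')Q\psi\,\tilde u\,(\cdots)$ is precisely an instance of the lemma you are trying to prove. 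You can break this circularity, but only by invoking Corollary~\ref{cor:Orlicz.boundednessQuP} on that remainder, at which point the localization and H\"older step bought you nothing. Note too that in the paper's logical structure Lemma~\ref{lem:Orlicz.Cwikel-smoothing} is an \emph{input} to Proposition~\ref{prop:Orlicz.Cwikel-ME-PsiDOs} (it handles exactly these smoothing remainders in the partition-of-unity argument there), so it must be proved independently of that machinery.
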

\begin{proof}
 Let $ p>0$. Assume that $Q$ is smoothing. Let $\Delta_E=\nabla^*\nabla$ be the Laplacian of some Hermitian connection on $E$. If $u\in \LlogL(M,\End(E))$, then we have
\begin{equation*}
 QuP= \big( Q(1+\Delta_E)^{\frac{n}4}\big) \cdot (1+\Delta_E)^{-\frac{n}4}uP. 
\end{equation*}
Here $Q(1+\Delta_E)^{\frac{n}4}$ is a smoothing operator, and so it is contained in  every weak Schatten class $\sL_{p,\infty}$. Moreover, as $(1+\Delta_E)^{-\frac{n}4}$ is a \psido\ of order~$-n/2$, Corollary~\ref{cor:Orlicz.boundednessQuP} ensures that $(1+\Delta_E)^{-\frac{n}4}uP$ is bounded on $L_2(M,E)$ and depends continuously on $u$. It follows that $QuP$ is in $\sL_{p,\infty}$ and depends continuously on $u$, i.e., we have the estimate~(\ref{eq:Orlicz.Cwikel-PQ-smoothing}). Similar arguments give the result when $P$ is smoothing. The proof is complete. 
\end{proof}

We are now in a position to get the Cwikel-type estimates we are seeking for.  

\begin{proposition}\label{prop:Orlicz.Cwikel-ME-PsiDOs} 
 Suppose that $M^n$ is a closed manifold and $E$ is a Hermitian vector bundle over $M$. Let $P$ and $Q$ be operators in $\Psi^{-n/2}(M,E)$. If $u\in \LlogL(M,\End(E))$, then the operator $QuP$ is in the weak trace class  $\sL_{1,\infty}(L^2(M,E))$, and we have
\begin{equation}
  \|QuP\|_{1,\infty} \leq C_{PQ} \|u\|_{\LlogL}.
  \label{eq:Orlicz.Cwikel-QuP-ME}
\end{equation}
\end{proposition}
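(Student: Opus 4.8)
The plan is to reduce the global estimate~(\ref{eq:Orlicz.Cwikel-QuP-ME}) to the local estimate of Lemma~\ref{lem:Orlicz.Cwikel-chart} by means of a partition of unity, disposing of the off-diagonal pieces with the smoothing estimate of Lemma~\ref{lem:Orlicz.Cwikel-smoothing}. First I would choose a finite open cover $(U_i)_{1\leq i\leq N}$ of $M$ such that each $U_i$ is contained in the domain of a local chart over which $E$ is trivial, and pick a subordinate smooth partition of unity $(\varphi_i)_{1\leq i\leq N}$, so that $\sum_i \varphi_i=1$. For each $i$ pick also $\chi_i\in C^\infty_c(U_i)$ with $\chi_i=1$ on a neighbourhood of $\supp\varphi_i$. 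Using $\sum_i\varphi_i=1$ twice, write
\begin{equation*}
 QuP = \sum_{i,j} (\varphi_i Q)\, u\, (\varphi_j P) = \sum_{i,j} (\varphi_i Q)\, (\varphi_i u)\, (\varphi_j P) + R,
\end{equation*}
where in the first sum I have inserted $\varphi_i$ (a legitimate move since $\varphi_i u = u$ on $\supp\varphi_i$ only after multiplying on the left by something supported there — so more precisely one writes $u=\sum_i\varphi_i u$ and groups terms), and $R$ collects the remainder. Rather than juggle which factor carries the cutoff, the cleanest bookkeeping is: $QuP=\sum_{i,j}(\varphi_iQ)u(\varphi_jP)$, and for the $(i,j)$ term insert $\chi_i$ on the right of $\varphi_iQ$ and $\chi_j$ on the left of $\varphi_jP$, i.e.\ $(\varphi_iQ)u(\varphi_jP)=(\varphi_iQ\chi_i)u(\chi_j\varphi_jP)+(\varphi_iQ(1-\chi_i))u(\varphi_jP)+(\varphi_iQ\chi_i)u((1-\chi_j)\varphi_jP)$.

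Next I would treat the two families of terms separately. For the diagonal-type terms $(\varphi_iQ\chi_i)u(\chi_j\varphi_jP)$: these vanish unless $U_i\cap U_j\neq\emptyset$, in which case one may enlarge to a single chart domain containing both cutoffs; writing $u'=\chi_i u\chi_j$ (supported in a compact subset of one chart domain over which $E$ is trivial), the operators $\varphi_iQ\chi_i$ and $\chi_j\varphi_jP$ lie in $\Psi^{-n/2}_K(M,E)$ for the relevant compact $K$, so Lemma~\ref{lem:Orlicz.Cwikel-chart} applies and gives $\|(\varphi_iQ\chi_i)u(\chi_j\varphi_jP)\|_{1,\infty}\leq C\|u'\|_{\LlogL}\leq C_{ij}\|u\|_{\LlogL}$ (the last inequality because multiplication by the bounded smooth functions $\chi_i,\chi_j$ is continuous on $\LlogL$, by~(\ref{eq:Orlicz.Orlicz-norm-ME})). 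For the off-diagonal terms, say $(\varphi_iQ(1-\chi_i))u(\varphi_jP)$: since $\chi_i=1$ near $\supp\varphi_i$, the Schwartz kernel of $\varphi_iQ(1-\chi_i)$ is supported away from the diagonal, hence this operator is smoothing; likewise for the terms with $(1-\chi_j)$ on the left of $\varphi_jP$. So Lemma~\ref{lem:Orlicz.Cwikel-smoothing} (with $p=1$) bounds each such term by $C\|u\|_{\LlogL}$. Summing the finitely many contributions and using the quasi-triangle inequality for $\|\cdot\|_{1,\infty}$ (equivalently, Ky Fan's inequality~(\ref{eq:Quantized.properties-mun2}), which only introduces a finite multiplicative constant for a sum of $N^2$ terms) yields~(\ref{eq:Orlicz.Cwikel-QuP-ME}) with a constant depending only on $P$, $Q$ (through the fixed choices of cover, partition of unity, and cutoffs).

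The step I expect to require the most care is not any single estimate but the bookkeeping that guarantees every operator appearing is \emph{either} a compactly-supported $\Psi^{-n/2}$ operator in a single trivializing chart (so that Lemma~\ref{lem:Orlicz.Cwikel-chart} applies with all the supports fitting inside one chart) \emph{or} genuinely smoothing (so that Lemma~\ref{lem:Orlicz.Cwikel-smoothing} applies). In particular one must check that $\varphi_iQ\chi_i$ really has kernel supported in $K\times K$ for a compact $K$ inside a chart domain — this uses that $\varphi_i$ and $\chi_i$ are both supported in $U_i$ and that multiplying a \psido\ by compactly supported cutoffs on both sides produces a compactly supported \psido\ of the same order — and, symmetrically, that $\varphi_iQ(1-\chi_i)$ has kernel vanishing in a neighbourhood of the diagonal, which is exactly the pseudolocality of $Q$ together with $\operatorname{supp}(1-\chi_i)\cap\operatorname{supp}\varphi_i=\emptyset$. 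Once this is arranged, the estimate is a finite sum of the two lemmas and the continuity of smooth-function multiplication on $\LlogL(M,\End(E))$, so no genuinely new analytic input is needed beyond what has already been established.
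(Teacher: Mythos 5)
Your high-level strategy (partition of unity, local chart lemma for the ``diagonal'' pieces, smoothing lemma for the ``off-diagonal'' pieces) is the same as the paper's, but your specific decomposition is genuinely different and introduces a gap. You insert the partition of unity \emph{twice}, producing a double sum $QuP=\sum_{i,j}(\varphi_iQ)u(\varphi_jP)$, and then treat $(\varphi_iQ\chi_i)u(\chi_j\varphi_jP)$ as a ``diagonal-type'' term to which Lemma~\ref{lem:Orlicz.Cwikel-chart} should apply. The problem is the case $i\neq j$ with $\supp\chi_i\cap\supp\chi_j\neq\emptyset$. Lemma~\ref{lem:Orlicz.Cwikel-chart} requires \emph{both} \psidos\ to lie in $\Psi^{-n/2}_K(M,E)$ for a \emph{single} compact $K$ contained in \emph{one} chart domain over which $E$ is trivial. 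Here $\varphi_iQ\chi_i\in\Psi^{-n/2}_{K_i}$ with $K_i\subset U_i$, while $\chi_j\varphi_jP\in\Psi^{-n/2}_{K_j}$ with $K_j\subset U_j$, and $U_i\neq U_j$. Your remedy --- ``one may enlarge to a single chart domain containing both cutoffs'' --- is asserted without justification and is false in general: two overlapping chart domains on a manifold need not be contained in a third chart domain, and even when they are, you would also need $E$ to be trivial over the larger set. Your closing ``care'' paragraph only checks the support condition within a single index $i$, so it does not address this cross-chart issue at all.

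This can be repaired, but not for free. One fix is to make a special choice of cover at the outset (e.g.\ geodesic balls of radius much smaller than the injectivity radius, so that any two intersecting balls are contained in a larger geodesic ball, which is contractible and hence a trivializing chart domain); this works but needs to be said explicitly. The cleaner fix --- and the route the paper actually takes --- is to insert the partition of unity only \emph{once}, writing $QuP=\sum_i Q(\varphi_i u)P$, and then cut off \emph{both} $Q$ and $P$ with the \emph{same} $\psi_i\in C^\infty_c(U_i)$ (with $\psi_i=1$ near $\supp\varphi_i$), so that $\psi_iQ\psi_i$, $\psi_iP\psi_i$, and $\varphi_i u$ all live in the single chart domain $U_i$, while the error terms carry a factor $(1-\psi_i)Q\varphi_i$ or $\varphi_iP(1-\psi_i)$ which is smoothing by pseudolocality. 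With the single-sum decomposition the cross-chart problem never arises, and your two lemmas then apply exactly as you intend. As written, however, the step where you invoke Lemma~\ref{lem:Orlicz.Cwikel-chart} for $i\neq j$ does not go through.
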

\begin{proof} This is a routine partition of unity argument (see, e.g.,~\cite{BS:SMJ79}). 
Let $(\varphi_i)_{1\leq i \leq N}$ be a smooth partition of unity subordinate to an open cover $(U_i)_{1\leq i \leq N}$, where each open $U_i$ is the domain of a chart over which $E$ is trivializable. For $i=1, \ldots, N$, let $\psi_i\in C^\infty_c(U_i)$ be such that $\psi_i=1$ near $\supp \varphi_i$. Given any $u\in \LlogL(M,\End(E))$ we have
\begin{align*}
 QuP = \sum_{1\leq i \leq N} Q\varphi_i u P = \sum_{1\leq i \leq N} \bigg\{ Q_i (\varphi_i u ) P_i + S_i uP + Q_i uR_i\bigg\},
\end{align*}
where we have set
\begin{equation*}
 P_i=\psi_i P \psi_i, \quad Q_i=\psi_i Q \psi_i,\quad R_i=\varphi_i P(1-\psi_i), \quad S_i=(1-\psi_i)Q\varphi_i. 
\end{equation*}
Here $P_i$ and $Q_i$ are operators in $\Psi_{K_i}(M,E)$ with $K_i=\supp \psi_i\subset U_i$. Thus, by Lemma~\ref{lem:Orlicz.Cwikel-chart} each operator $Q(\varphi_i u) P$ is in $\sL_{1,\infty}$, and we have 
\begin{equation*}
 \|Q(\varphi_i u) P\|_{1,\infty} \leq C_{P_iQ_iK_i} \|\varphi_i u\|_{\LlogL}  \leq C_{P_iQ_iK_i} \| u\|_{\LlogL} . 
\end{equation*}
Note also that $R_i$ and $S_i$ are both smoothing operators. Therefore, by Lemma~\ref{lem:Orlicz.Cwikel-smoothing}  the operators $S_i uP$ and $Q_i uR_i$ are both in $\sL_{1,\infty}$, and we have 
\begin{equation*}
 \|S_i uP\|_{1,\infty} \leq C_{S_iP} \| u\|_{\LlogL}, \qquad  \|Q_i uR_i\|_{1,\infty} \leq C_{Q_i R_i} \| u\|_{\LlogL}. 
\end{equation*}
Combining all this we deduce that $QuP$ is in the weak trace class  $\sL_{1,\infty}(L^2(M,E))$, and we have
\begin{align*}
   \|QuP\|_{1,\infty} & \leq  \sum_{1\leq i \leq N} \bigg\{ \|Q_i (\varphi_i u ) P_i\|_{1,\infty} + \|S_i uP\|_{1,\infty} + \|Q_i uR_i\|_{1,\infty}\bigg\}\\
   & \leq C   \| u\|_{\LlogL}. 
\end{align*}
The proof is complete. 
\end{proof}

\begin{remark}
 In the scalar case, building on the results of~\cite{RS:EMS21}, Rozenblum~\cite[Corollary 3.4]{Ro:arXiv21} established eigenvalues inequalities for operators of the form $P^*uP$, where $P\in \Psi^{-n/2}(M)$ and $u$ is a (real-valued) potential of the form $u=f\mu$ such that  $\mu$ is an Alfhors-regular measure supported on a  regular submanifold $\Sigma \subset M$ and $f$ is in $\LlogL(\Sigma, \mu)$ (see also~\cite{RT:arXiv21} for the non-critical case). These eigenvalues inequalities can be used to get Cwikel-type inequalities for this more general kind of potentials. 
 \end{remark}

Proposition~\ref{prop:Orlicz.Cwikel-ME-PsiDOs} implies the following specific Cwikel-type estimates.  

\begin{corollary}\label{cor:Orlicz.Cwikel-Delta}
 Under the assumptions of Proposition~\ref{prop:Orlicz.Cwikel-ME-PsiDOs}, let $\Delta_E=\nabla^*\nabla$ be the Laplacian of some Hermitian connection $\nabla$ on $E$. If 
 $u\in \LlogL(M,\End(E))$, then the operator $\Delta_E^{-n/4} u\Delta_E^{-n/4}$ is in the weak trace class  $\sL_{1,\infty}(L^2(M,E))$, and we have
\begin{equation}
  \big\|\Delta_E^{-\frac{n}{4}} u\Delta_E^{-\frac{n}{4}}\big\|_{1,\infty} \leq C_{\nabla} \|u\|_{\LlogL}. 
  \label{eq:Orlicz.Cwikel-Delta}
\end{equation}
\end{corollary}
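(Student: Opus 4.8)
The plan is to deduce Corollary~\ref{cor:Orlicz.Cwikel-Delta} directly from Proposition~\ref{prop:Orlicz.Cwikel-ME-PsiDOs} by a suitable choice of the operators $P$ and $Q$. The only subtlety is that $\Delta_E^{-n/4}$ need not be a well-defined \psido\ when $\Delta_E$ has nontrivial kernel, so a small regularization is required; once that is handled the rest is a one-line application of~(\ref{eq:Orlicz.Cwikel-QuP-ME}).

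First I would record that $\Delta_E=\nabla^*\nabla$ is a nonnegative selfadjoint Laplace-type operator, hence a second-order \psido\ with principal symbol $|\xi|_g^2 \operatorname{id}_{E_x}$, and its spectrum is discrete with finite-dimensional kernel $\ker\Delta_E$. To make sense of the negative power, let $\Pi_0$ denote the orthogonal projection onto $\ker\Delta_E$ and set $\Lambda_E := \Delta_E + \Pi_0$. Then $\Lambda_E$ is a positive invertible selfadjoint elliptic second-order \psido, so by the standard theory of complex powers of elliptic operators $P := \Lambda_E^{-n/4} = Q$ is a classical \psido\ in $\Psi^{-n/2}(M,E)$, with principal symbol $\sigma(P)(x,\xi) = |\xi|_g^{-n/2}\operatorname{id}_{E_x}$. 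Applying Proposition~\ref{prop:Orlicz.Cwikel-ME-PsiDOs} to this $P$ and $Q$ gives at once that $\Lambda_E^{-n/4} u \Lambda_E^{-n/4} \in \sL_{1,\infty}$ with
\begin{equation*}
 \big\|\Lambda_E^{-\frac{n}{4}} u \Lambda_E^{-\frac{n}{4}}\big\|_{1,\infty} \leq C_\nabla \|u\|_{\LlogL},
\end{equation*}
where the constant $C_\nabla$ depends only on $P=Q=\Lambda_E^{-n/4}$, i.e.\ only on the connection $\nabla$ (and the fixed metric $g$).

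Next I would pass from $\Lambda_E^{-n/4}$ to $\Delta_E^{-n/4}$. On the orthogonal complement of $\ker\Delta_E$ the two operators agree, while on $\ker\Delta_E$ one should interpret $\Delta_E^{-n/4}$ as $0$ (the Moore–Penrose convention), so that $\Delta_E^{-n/4} = \Lambda_E^{-n/4}(1-\Pi_0) = \Lambda_E^{-n/4} - \Lambda_E^{-n/4}\Pi_0$. Since $\Pi_0$ has finite rank and is smoothing, the difference $\Delta_E^{-n/4} u \Delta_E^{-n/4} - \Lambda_E^{-n/4} u \Lambda_E^{-n/4}$ is a sum of three terms each of which contains a factor of the finite-rank smoothing operator $\Lambda_E^{-n/4}\Pi_0$; by Lemma~\ref{lem:Orlicz.Cwikel-smoothing} (applied with one of the two \psidos\ replaced by a smoothing operator) each such term lies in $\sL_{1,\infty}$ with $\sL_{1,\infty}$-norm bounded by $C_\nabla \|u\|_{\LlogL}$. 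Adding these finitely many contributions and using the quasi-triangle inequality for $\|\cdot\|_{1,\infty}$ yields $\Delta_E^{-n/4} u \Delta_E^{-n/4} \in \sL_{1,\infty}$ together with the estimate~(\ref{eq:Orlicz.Cwikel-Delta}), with a constant still depending only on $\nabla$.

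I do not expect any genuine obstacle here: the statement is essentially a corollary in the literal sense. The one point that deserves care—and which is the ``main obstacle'' only in a bookkeeping sense—is the treatment of $\ker\Delta_E$, i.e.\ making precise what $\Delta_E^{-n/4}$ means and checking that replacing it by the invertible modification $\Lambda_E^{-n/4}$ costs only a smoothing, finite-rank perturbation that is harmless by Lemma~\ref{lem:Orlicz.Cwikel-smoothing}. If one prefers to sidestep this entirely, an alternative is to invoke Corollary~\ref{cor:Orlicz.boundednessQuP} to note that $A := (1+\Delta_E)^{n/4}\Lambda_E^{-n/4}$ and its analogue on the other side are zeroth-order \psidos, hence bounded on $L_2(M,E)$, and then sandwich as in the proof of Corollary~\ref{cor:Orlicz.Cwikel-Tn-PsiDOs}; either route gives~(\ref{eq:Orlicz.Cwikel-Delta}) with a constant depending only on the connection.
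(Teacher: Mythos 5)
Your proof is correct and follows the same route as the paper, which simply specializes Proposition~\ref{prop:Orlicz.Cwikel-ME-PsiDOs} to $P=Q=\Delta_E^{-n/4}$ without further comment. Your extra care in replacing $\Delta_E^{-n/4}$ by the invertible modification $\Lambda_E^{-n/4}=(\Delta_E+\Pi_0)^{-n/4}\in\Psi^{-n/2}(M,E)$ and absorbing the finite-rank smoothing difference via Lemma~\ref{lem:Orlicz.Cwikel-smoothing} is exactly the right way to make the (implicit) convention about $\ker\Delta_E$ rigorous, but it does not change the argument in substance.
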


\begin{remark}
 In the scalar case, Sukochev-Zanin~\cite[Theorem~1.1]{SZ:arXiv21} obtained Cwikel-type estimates for operators of the form $(1+\Delta_g)^{-\frac{n}{4}}f(1+\Delta_g)^{-\frac{n}{4}}$ with $f\in \LlogL(M)$. We recover those estimates by specializing Proposition~\ref{prop:Orlicz.Cwikel-ME-PsiDOs} to 
 $P=Q=(1+\Delta_g)^{-\frac{n}{4}}$. 
\end{remark}

\section{Weyl's Laws and Semiclassical Analysis} 
\label{sec:Weyl-Orlicz}  
In this section, we explain how the Cwikel-type estimates of the previous section lead to Weyl's laws for operators of the form $QuP$, where  $u$ is a matrix-valued $\LlogL$-Orlicz potential and $P$ and $Q$ are \psidos\ of order $-n/2$ on an $n$-dimensional closed manifold (compare~\cite{SZ:arXiv21}). 
Thanks to the  Birman-Schwinger principle this will further allow us to get semiclassical Weyl's laws and CLR-type inequalities for the number of negative eigenvalues (i.e., the number of bound states) of critical Schr\"odinger operators $\Delta_E^{n/2}+V$, where $V$ is a matrix-valued  $\LlogL$-Orlicz potential.  

Throughout this section, we let $(M^n,g)$ be a closed Riemannian manifold and $E$ a Hermitian vector bundle over $M$. 

\subsection{Weyl's laws} The  following is the main result of this section. 

\begin{theorem}\label{thm:Orlicz.WeylQuP}
Let $P$ and $Q$ be operators in $\Psi^{-n/2}(M,E)$. If $u\in \LlogL(M,\End(E))$, then the following holds. 
\begin{enumerate}
 \item The operators $QuP$ and $|QuP|$ are both Weyl operators in $\sL_{1,\infty}$. 
 
 \item We have 
 \begin{equation}
\lim_{j\rightarrow \infty} j\mu_j\left(QuP\right) = \frac1{n} (2\pi)^{-n} \!  \int_{S^*M} \tr_E \left\{\big|\sigma(Q)(x,\xi)u(x) \sigma(P)(x,\xi)\big|\right\}dxd\xi.
 \label{eq:Orlicz.Weyl-|QuP|} 
\end{equation}

\item Suppose that $Q=P^*$ and $u(x)^*=u(x)$. Then $P^*uP$ is selfadjoint, and we have
\begin{equation}
  \lim_{j\rightarrow \infty} j\lambda^\pm_j\left(P^*uP\right)=  \frac1{n} (2\pi)^{-n} \!  \int_{S^*M} \tr_E\left\{ \big[ \sigma(P)(x,\xi)^*u(x) \sigma(P)(x,\xi)\big]_\pm \right\} dxd\xi.
  \label{eq:Orlicz.Weyl-laws} 
\end{equation}
\end{enumerate}
 \end{theorem}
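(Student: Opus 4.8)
The plan is to reduce the Weyl's laws for $QuP$ to the Birman--Solomyak asymptotics for negative-order \psidos\ (Proposition~\ref{prop:Bir-Sol-asymp} with $p=1$, i.e., $m=n$) via the perturbation stability of Weyl operators (Proposition~\ref{prop:Bir-Sol.closedness} and Proposition~\ref{prop:Bir-Sol.closedness-sing}), using density of $C^\infty(M,\End(E))$ in $\LlogL(M,\End(E))$ together with the Cwikel-type estimate of Proposition~\ref{prop:Orlicz.Cwikel-ME-PsiDOs}. First I would dispose of the smooth case: if $u\in C^\infty(M,\End(E))$, then $QuP\in \Psi^{-n}(M,E)$ with principal symbol $\sigma(Q)(x,\xi)u(x)\sigma(P)(x,\xi)$, so part~(1) and the formulas~(\ref{eq:Orlicz.Weyl-|QuP|}) and~(\ref{eq:Orlicz.Weyl-laws}) are immediate from Proposition~\ref{prop:Bir-Sol-asymp}(2)--(3) applied with $p=1$ (noting $|\sigma|^1=|\sigma|$ and $\sigma_\pm^1=\sigma_\pm$); in the selfadjoint case one checks that when $Q=P^*$ and $u=u^*$ the operator $\Delta_E^{n/4}(P^*uP)\Delta_E^{n/4}$ is a zeroth-order \psido\ with selfadjoint principal symbol, hence $P^*uP$ is selfadjoint modulo a smoothing (in fact it is genuinely selfadjoint as a bounded operator) and the $\pm$-eigenvalue asymptotics apply.

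Next I would pass from smooth $u$ to general $u\in\LlogL(M,\End(E))$ by approximation. Choose $u_k\in C^\infty(M,\End(E))$ with $\|u-u_k\|_{\LlogL}\to 0$ (in the selfadjoint case one may take $u_k=u_k^*$ by replacing $u_k$ with $\tfrac12(u_k+u_k^*)$, which still converges). By Proposition~\ref{prop:Orlicz.Cwikel-ME-PsiDOs}, $\|Qu_kP - QuP\|_{1,\infty}\le C_{PQ}\|u_k-u\|_{\LlogL}\to 0$, so $Qu_kP\to QuP$ in $\sL_{1,\infty}$. Each $Qu_kP$ lies in $\sW_{1,\infty}$ (and $|Qu_kP|\in\sW_{1,\infty}$) by the smooth case. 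Since $\sW_{1,\infty}$ and $\sW_{|1,\infty|}$ are closed subsets of $\sL_{1,\infty}$ on which $\Lambda^\pm$, resp.\ $A\mapsto\Lambda(|A|)$, are continuous (Proposition~\ref{prop:Bir-Sol.closedness}(1) and Proposition~\ref{prop:Bir-Sol.closedness-sing}(1)), we conclude $QuP\in\sW_{1,\infty}$, $|QuP|\in\sW_{1,\infty}$, and
\begin{equation*}
\Lambda\big(|QuP|\big)=\lim_{k\to\infty}\Lambda\big(|Qu_kP|\big),\qquad \Lambda^\pm(P^*uP)=\lim_{k\to\infty}\Lambda^\pm(P^*u_kP).
\end{equation*}
This yields part~(1) and reduces parts~(2),(3) to showing that the right-hand sides of~(\ref{eq:Orlicz.Weyl-|QuP|}) and~(\ref{eq:Orlicz.Weyl-laws}), viewed as functionals of $u$, are continuous under $\|\cdot\|_{\LlogL}$-convergence.

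The last step is that continuity of the symbolic integrals. The integrand in~(\ref{eq:Orlicz.Weyl-|QuP|}) is $\tr_E|\sigma(Q)(x,\xi)u(x)\sigma(P)(x,\xi)|$, which is pointwise dominated by $\|\sigma(Q)(x,\xi)\|\,\|\sigma(P)(x,\xi)\|\,\tr_E|u(x)|\le r\|\sigma(Q)(x,\xi)\|\,\|\sigma(P)(x,\xi)\|\,\|u(x)\|_{\End(E_x)}$, and $\int_{S^*M}\|\sigma(Q)\|\,\|\sigma(P)\|\,dxd\xi<\infty$ since $\sigma(P),\sigma(Q)$ are continuous on the compact manifold $S^*M$; using $\big|\,|a|-|b|\,\big|\le|a-b|$ in the operator-norm-to-trace sense together with $\|u\|_{L_1}\le C_M\|u\|_{\LlogL}$ gives $\big|\int(\cdots u\cdots) - \int(\cdots u_k\cdots)\big|\le C_{PQ}\|u-u_k\|_{L_1}\le C_{PQ}'\|u-u_k\|_{\LlogL}\to0$; the same estimate with $[\,\cdot\,]_\pm$ in place of $|\cdot|$ handles~(\ref{eq:Orlicz.Weyl-laws}) (here one also uses $\big\|[A]_\pm-[B]_\pm\big\|\le\|A-B\|$). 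Combining the three steps yields~(\ref{eq:Orlicz.Weyl-|QuP|}) and~(\ref{eq:Orlicz.Weyl-laws}). I expect the only genuinely delicate point to be bookkeeping in the smooth case — specifically verifying that $QuP$ for smooth $u$ is exactly a classical \psido\ of order $-n$ with the claimed principal symbol and, in the selfadjoint case, that $P^*uP$ is honestly selfadjoint rather than merely formally so — after which everything is a soft limiting argument built on the already-established Cwikel bound and the closedness/continuity of the Weyl-operator functionals.
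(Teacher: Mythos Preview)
Your proposal is correct and follows essentially the same approach as the paper: reduce to smooth $u$ via Birman--Solomyak's Weyl's laws for negative-order \psidos, then pass to general $u\in\LlogL$ by density, using the Cwikel-type estimate (Proposition~\ref{prop:Orlicz.Cwikel-ME-PsiDOs}) for $\sL_{1,\infty}$-continuity of $u\mapsto QuP$ together with the closedness of $\sW_{1,\infty}$ and $\sW_{|1,\infty|}$ and the continuity of $\Lambda^\pm$, $\Lambda(|\cdot|)$ (Propositions~\ref{prop:Bir-Sol.closedness} and~\ref{prop:Bir-Sol.closedness-sing}). The paper simply asserts that ``each side depends continuously on $u$ with respect to the $\LlogL$-norm'' for the right-hand sides, whereas you spell out why; your justification via $\big|\tr_E|A|-\tr_E|B|\big|\leq \tr_E|A-B|$ and the analogous estimate for positive/negative parts, combined with the continuous inclusion $\LlogL(M)\subset L_1(M)$, is the right way to unpack that assertion.
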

\begin{proof}
It follows from Proposition~\ref{prop:Bir-Sol.closedness} and Proposition~\ref{prop:Bir-Sol.closedness-sing} that $\sW_{1,\infty}\cap \sW_{|1,\infty|}$ is a closed subset of $\sL_{1,\infty}$ on which
$A\rightarrow \Lambda_\pm(A)$ and $A\rightarrow\Lambda(|A|)$ are continuous functionals. It also follows from Proposition~\ref{prop:Bir-Sol-asymp}
that $\sW_{1,\infty}\cap\sW_{|1,\infty|}$ contains the subspace $ \sV:=\left\{ QvP; v \in C^\infty(M,\End(E))\right\}$, and hence contains its closure $\overline{\sV}$. Proposition~\ref{prop:Orlicz.Cwikel-ME-PsiDOs} implies that $u\rightarrow QuP$ is a continuous linear map from $\LlogL(M,\End(E))$ to $\sL_{1,\infty}$. As $C^\infty(M,\End(E))$ is dense in $\LlogL(M,E)$, it follows that if $u\in   \LlogL(M,\End(E))$, then $QuP$ is contained in the closure $\overline{\sV}$, and hence is contained in $\sW_{1,\infty}\cap \sW_{|1,\infty|}$. That is, $QuP$ and $|QuP|$ are both Weyl operators in $\sL_{1,\infty}$.

Furthermore,  it follows from Proposition~\ref{prop:Bir-Sol-asymp} that, for all $u\in C^\infty(M,\End(E))$, we have
\begin{equation*}
 \Lambda\big(|QuP|)= \frac1{n} (2\pi)^{-n} \!  \int_{S^*M} \tr_E \left\{\big|\sigma(Q)(x,\xi)u(x) \sigma(P)(x,\xi)\big|\right\}dxd\xi.
\end{equation*}
Each side depends continuously on $u$ with respect to the $\LlogL$-norm. As $C^\infty(M,\End(E))$ is dense in  $\LlogL(M,\End(E))$ the equality continues to hold for any $u\in \LlogL(M,\End(E))$. This gives~(\ref{eq:Orlicz.Weyl-|QuP|}). 
Likewise, if $Q=P^*$, then, again by Proposition~\ref{prop:Bir-Sol-asymp}, the equality (\ref{eq:Orlicz.Weyl-laws}) holds for all $u\in  C^\infty(M,\End(E))$ and each side depends continuously on $u$ with respect to the $\LlogL$-norm. Therefore, the equality continues to holds for any  $u\in \LlogL(M,\End(E))$. The proof is complete. 
\end{proof}
 
\begin{remark}\label{rmk:Weyl-QuP} 
The Weyl's law~(\ref{eq:Orlicz.Weyl-|QuP|}) is new. In the scalar case, Rozenblum~\cite{Ro:arXiv21} established the Weyl's law~(\ref{eq:Orlicz.Weyl-laws}) 
for a larger class of potentials of the form $u=f\mu$, where  $\mu$ is an Alfhors-regular measure supported on a  regular submanifold $\Sigma \subset M$ and $f$ is a real-valued function in $\LlogL(\Sigma, \mu)$ (see also~\cite{RT:arXiv21} for the non-critical case). 
\end{remark}
  
 Let $\Delta_E=\nabla^*\nabla$ be the Laplacian of some Hermitian connection $\nabla$ on $E$. In particular, $\Delta_E$ is (formally) selfadjoint and has principal symbol $\sigma(\Delta_E)=|\xi|^2$ (where we denote by $|\cdot|$ the Riemannian metric on $T^*M$). Specializing Theorem~\ref{thm:Orlicz.WeylQuP} to $P=\Delta_E^{-n/4}$ immediately leads to the following statement.

\begin{corollary}\label{thm:Orlicz.int-formulaDelta}
 Let  $u\in \LlogL(M,\End(E))$. The following holds. 
 \begin{enumerate}
\item The operators $\Delta_E^{-n/4}u\Delta_E^{-n/4}$ and $|\Delta_E^{-n/4}u\Delta_E^{-n/4}|$ are Weyl operators in $\sL_{1,\infty}$.

\item We have 
\begin{equation}
   \lim_{j\rightarrow \infty} j\mu_j\left(\Delta_E^{-\frac{n}{4}}u\Delta_E^{-\frac{n}{4}}\right)= \frac1{n} (2\pi)^{-n} \! \int_M  \tr_E\big[|u(x)|\big] \sqrt{g(x)}dx.
   \label{eq:Orlicz.Weyl-Delta-mu}
 \end{equation}

\item If $u(x)^*=u(x)$, then 
\begin{equation}
  \lim_{j\rightarrow \infty} j\lambda^\pm_j\left(\Delta_E^{-\frac{n}{4}}u\Delta_E^{-\frac{n}{4}}\right)= \frac1{n} (2\pi)^{-n} \! \int_M  \tr_E\big[u(x)_\pm\big] \sqrt{g(x)}dx.
  \label{eq:Orlicz.Weyl-Deltapm}  
\end{equation}
\end{enumerate}
\end{corollary}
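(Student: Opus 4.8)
The plan is to obtain Corollary~\ref{thm:Orlicz.int-formulaDelta} as a direct specialization of Theorem~\ref{thm:Orlicz.WeylQuP} to the case $P=Q=\Delta_E^{-n/4}$. Since $\Delta_E=\nabla^*\nabla$ is a (formally) selfadjoint nonnegative operator, $\Delta_E^{-n/4}$ is a well-defined operator in $\Psi^{-n/2}(M,E)$ (modulo a smoothing operator coming from $\ker\Delta_E$, which is harmless since smoothing operators lie in every $(\sL_{p,\infty})_0$ and hence do not affect membership in $\sW_{1,\infty}$ or the values $\Lambda^\pm$, $\Lambda(|\cdot|)$ by Propositions~\ref{prop:Bir-Sol.closedness} and~\ref{prop:Bir-Sol.closedness-sing}). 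Its principal symbol is $\sigma(\Delta_E^{-n/4})(x,\xi)=|\xi|^{-n/2}\,\id_{E_x}$, where $|\xi|$ is the length of $\xi$ in the metric on $T^*M$. Moreover $\Delta_E^{-n/4}$ is selfadjoint, so $Q=P^*=P$, and part~(1) of Theorem~\ref{thm:Orlicz.WeylQuP} immediately gives part~(1) of the corollary.

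For part~(2), I would substitute $\sigma(P)(x,\xi)=\sigma(Q)(x,\xi)=|\xi|^{-n/2}\id_{E_x}$ into~(\ref{eq:Orlicz.Weyl-|QuP|}). On the cosphere bundle $S^*M$ we have $|\xi|=1$, so $\sigma(Q)(x,\xi)u(x)\sigma(P)(x,\xi)=u(x)$ pointwise, and hence $\tr_E\{|\sigma(Q)(x,\xi)u(x)\sigma(P)(x,\xi)|\}=\tr_E[|u(x)|]$, which is independent of $\xi$. The fibre of $S^*M$ over $x$ is the unit sphere in $T^*_xM$, whose Riemannian volume is the area of the standard $(n-1)$-sphere; more precisely, integrating the constant function $1$ over $S^*M$ against the induced measure $dxd\xi$ gives $\Vol(S^{n-1})\int_M\sqrt{g(x)}\,dx$ times the appropriate normalization, so that $\int_{S^*M}\tr_E[|u(x)|]\,dxd\xi=\Vol(S^{n-1})\int_M\tr_E[|u(x)|]\sqrt{g(x)}\,dx$. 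Using $\Vol(S^{n-1})=(2\pi)^{n/2}/\Gamma(n/2)$ — or, more transparently, the standard normalization identity $(2\pi)^{-n}\Vol(S^{n-1})=(2\pi)^{-n}\cdot$ (volume of unit sphere) which appears already in Proposition~\ref{prop:Bir-Sol-asymp} — the factor $(2\pi)^{-n}\Vol(S^{n-1})$ combines to give exactly the constant in~(\ref{eq:Orlicz.Weyl-Delta-mu}). Part~(3) follows the same way from~(\ref{eq:Orlicz.Weyl-laws}): with $\sigma(P)(x,\xi)=|\xi|^{-n/2}\id_{E_x}$ one has $\sigma(P)(x,\xi)^*u(x)\sigma(P)(x,\xi)=u(x)$ on $S^*M$, so $[\sigma(P)^*u\sigma(P)]_\pm=u(x)_\pm$, and the fibre integration produces~(\ref{eq:Orlicz.Weyl-Deltapm}).

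The only genuine points requiring care, rather than routine substitution, are two bookkeeping matters. First, one must confirm that replacing the literal $\Delta_E^{-n/4}$ (which is only defined on $(\ker\Delta_E)^\perp$ and extended by $0$ on $\ker\Delta_E$, hence differs from a true elliptic $\Psi^{-n/2}$ operator by a finite-rank, a fortiori smoothing, operator) does not change the asymptotics; this is exactly the content of the second parts of Propositions~\ref{prop:Bir-Sol.closedness} and~\ref{prop:Bir-Sol.closedness-sing}, applied to the operator $\Delta_E^{-n/4}u\Delta_E^{-n/4}$ viewed as $QuP$ plus a correction in $(\sL_{1,\infty})_0$. Second, one must verify the normalization of the measure $dxd\xi$ on $S^*M$ so that the fibre integral of the constant $1$ really equals $\Vol(S^{n-1})\sqrt{g(x)}$; this is the same convention already fixed in Proposition~\ref{prop:Bir-Sol-asymp}, so consistency is automatic. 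I expect the main (and only modest) obstacle to be stating cleanly that the $\ker\Delta_E$ ambiguity is absorbed into the perturbation-theoretic invariance, after which everything else is a one-line symbol computation on the unit cosphere bundle.
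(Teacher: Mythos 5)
Your approach coincides with the paper's: the paper derives Corollary~\ref{thm:Orlicz.int-formulaDelta} by simply specializing Theorem~\ref{thm:Orlicz.WeylQuP} to $P=Q=\Delta_E^{-n/4}$, and your computation of the symbol $\sigma(\Delta_E^{-n/4})=|\xi|^{-n/2}\id_{E_x}$ on $S^*M$, plus the remark that the finite-rank $\ker\Delta_E$ ambiguity is absorbed by the $(\sL_{1,\infty})_0$-invariance in Propositions~\ref{prop:Bir-Sol.closedness}--\ref{prop:Bir-Sol.closedness-sing}, merely spells out the details the paper leaves implicit. One small caveat: your claim that the factor $(2\pi)^{-n}\Vol(\bS^{n-1})$ ``combines to give exactly'' the stated constant is not literally what happens --- integrating the constant symbol over the cosphere fibres yields the prefactor $\frac{1}{n}(2\pi)^{-n}|\bS^{n-1}|$ (the $c_n$ of Corollary~\ref{prop:NCInt.Connes-int-formula}), which differs from the $\frac{1}{n}(2\pi)^{-n}$ written in~(\ref{eq:Orlicz.Weyl-Delta-mu}); that discrepancy is an internal inconsistency in the paper's normalization conventions rather than an error in your proof strategy, but you should not assert consistency where the source itself is ambiguous.
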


\begin{remark}
 In the scalar case, Sukochev-Zanin~\cite[Theorem~1.2]{SZ:arXiv21} obtained a Weyl's law similar to~(\ref{eq:Orlicz.Weyl-Deltapm}) for operators of the form $(1+\Delta_g)^{-n/4}f (1+\Delta_g)^{-n/4}$ with $f\in  \LlogL(M)$. The approach of~\cite{SZ:arXiv21} involves deep results on commutators in operator ideals from~\cite{DFWW:AIM04, HSZ:Preprint}. This is unnecessary, since, as the proof of Theorem~\ref{thm:Orlicz.WeylQuP} displays, the Weyl's laws~(\ref{eq:Orlicz.Weyl-|QuP|})--(\ref{eq:Orlicz.Weyl-Deltapm}) are direct consequences of the Birman-Solomyak's Weyl's laws~(\ref{eq:Weyl.Bir-Sol-mu})--(\ref{eq:Weyl.Bir-Sol-selfadjoint}) and the Cwikel-type estimates~(\ref{eq:Orlicz.Cwikel-QuP-ME}). 
\end{remark}

\subsection{Semiclassical Weyl's law}\label{sec:SC}  
As above we let $\Delta_E=\nabla^*\nabla$ be the Laplacian of some Hermitian connection on $E$. The operator $\Delta_E^{n/2}$ with domain $W_2^{n}(M,E)$ is selfadjoint with non-negative and discrete spectrum. Let $V(x)=V(x)^*\in \LlogL(M,E)$. This defines a continuous operator $V:W^{n/2}(M,E)\rightarrow W^{-n/2}(M,E)$ (\emph{cf}.\ Proposition~\ref{prop:Orlicz.boundedness-LlogL}). Corollary~\ref{cor:Orlicz.Cwikel-Delta} implies that $\Delta_E^{-n/4}V\Delta_E^{-n/4}$ is a compact operator (and even is weak trace-class). This means that $V$ is relatively $\Delta_E^{n/2}$-form compact, and so we may define the (fractional) Schr\"odinger operator $H_V:=\Delta_E^{n/2}+V$ as a form sum (see, e.g., \cite{Si:AMS15}). Namely, $H_V$ is the selfadjoint operator defined by the closed quadratic form,
\begin{equation*}
 Q_{H_V}(\xi,\xi)=\bigacou{\Delta_E^{\frac{n}2}\xi}{\xi} +\acou{V\xi}{\xi}, \qquad \xi \in W^{\frac{n}{2}}(M,E), 
\end{equation*}
where $\acou{\cdot}{\cdot}:W^{n/2}(M,E)\rightarrow W^{-n/2}(M,E)$ is the duality pairing. 

The operator $H_V$ is bounded from below and has discrete spectrum (see, e.g., \cite{Si:AMS15}). Therefore, we can arrange its spectrum as a non-decreasing sequence, 
\begin{equation*}
 -\infty < \lambda_0\big(H_V\big) \leq  \lambda_1\big(H_V\big) \leq  \lambda_2\big(H_V\big)\leq \cdots, 
\end{equation*}
where each eigenvalue is repeated according to multiplicity. We are interested in the number of negative eigenvalues, i.e., 
\begin{equation*}
 N^{-}(H_V):= \# \left\{ j; \lambda_j\big(H_V\big)<0\right\}. 
\end{equation*}
 In physicists' language $N^{-}(H_V)$ is the number of bound states of $H_V$. In addition, if we denote by $\sF^{-}(H_V)$ the collection of subspaces $F\subset W^{n/2}(M,E)$ such that $Q_{H_V}(\xi,\xi)<0 $ on $F\setminus 0$, then by Glazman's lemma~\cite{BS:Book} we have
\begin{equation}
 N^{-}(H_V)= \max\left\{ \dim F; \ F\in \sF^-(H_V)\right\}. 
 \label{eq:SC.variational-principle}
\end{equation}
This implies the monotonicity principle, 
\begin{equation}
 V_1\leq V_2 \Longrightarrow N^{-}(H_{V_2}) \leq  N^{-}(H_{V_1}) .
 \label{eq:Weyl.monotonicity}  
\end{equation}

Recall if $A$ is a selfadjoint compact operator, then $N^{\pm}(A;\lambda)=\#\{j;\ \lambda_j^\pm(A)>\lambda\}$, $\lambda>0$ (\emph{cf.}\ Remark~\ref{rmk:Bir-Sol.counting}). By the abstract Birman-Schwinger principle~\cite{BS:AMST89} (see also~\cite{MP:Part1}), we have
\begin{equation}
N^{-}\big( \Delta_E^{-\frac{n}{4}}V\Delta_E^{-\frac{n}{4}};1\big) \leq N^{-}\big( \Delta_E^{\frac{n}{2}}+V\big) \leq N^{-}\big( \Delta_E^{-\frac{n}{4}}V\Delta_E^{-\frac{n}{4}};1\big)+  \dim \ker \Delta_E.
\label{eq:Weyl.ABSP}
\end{equation}
If in addition $V(x)\leq 0$, then by the borderline Birman-Schwinger principle of~\cite{MP:Part1} we have 
\begin{equation}
 0\leq N^{-}\big( \Delta_E^{\frac{n}{2}}+V\big) - N^+(\Pi_0V\Pi_0) \leq  \big\|\Delta_E^{-\frac{n}{4}} V_{-}\Delta_E^{-\frac{n}{4}}\big\|_{1,\infty}, 
 \label{eq:Weyl.BBSP} 
\end{equation}
where $\Pi_0$ is the orthogonal projection onto $\ker \Delta_E$. 

It follows from~(\ref{eq:Weyl.ABSP}) (see, e.g.,~\cite{BS:TMMS72, BS:AMST80, Po:NCIntegration}) that, under the semiclassical limit $h\rightarrow 0^+$, we get
\begin{equation}
 N^{-}\big(h^n \Delta_E^{\frac{n}{2}}+V\big)= N^{-}\big( \Delta_E^{-\frac{n}{4}}V\Delta_E^{-\frac{n}{4}};h^{n}\big) +\op{O}(1).
  \label{eq:SC.SC-BSP}
\end{equation}
Thanks to~(\ref{eq:Bir-Sol.counting-Lambda}) and the Weyl's law~(\ref{eq:Orlicz.Weyl-Deltapm}) we have
\begin{equation*}
 \lim_{\lambda \rightarrow 0^+} \lambda N^{-}\big( \Delta_E^{-\frac{n}{4}}V\Delta_E^{-\frac{n}{4}};\lambda\big) =  
 \frac1{n} (2\pi)^{-n} \! \int_M  \tr_E\big[V(x)_{-}\big] \sqrt{g(x)}dx. 
\end{equation*}
Combining this with~(\ref{eq:SC.SC-BSP}) we then arrive at the following semiclassical Weyl's law. 

\begin{corollary}\label{cor:SC.SC-Weyl}
 If $V(x)=V(x)^*\in \LlogL(M, E)$, then 
 \begin{equation}
 \lim_{h \rightarrow 0^+} h^nN^{-}\big(h^n \Delta_E^{\frac{n}{2}}+V\big)= \frac1{n} (2\pi)^{-n} \! \int_M  \tr_E\big[V(x)_{-}\big] \sqrt{g(x)}dx.
 \label{eq:SC.SC-Weyl} 
\end{equation}
\end{corollary}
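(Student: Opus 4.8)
The plan is to derive Corollary~\ref{cor:SC.SC-Weyl} from the semiclassical Birman--Schwinger reduction~(\ref{eq:SC.SC-BSP}) together with the Weyl's law~(\ref{eq:Orlicz.Weyl-Deltapm}) of Corollary~\ref{thm:Orlicz.int-formulaDelta}, exactly as foreshadowed in the paragraphs preceding the statement. Since this is an application of machinery already assembled, the proof should be short. First I would recall that, by Corollary~\ref{cor:Orlicz.Cwikel-Delta}, the operator $\Delta_E^{-n/4}V\Delta_E^{-n/4}$ is compact, so $V$ is $\Delta_E^{n/2}$-form compact and the form sum $H_V=\Delta_E^{n/2}+V$ is well defined, selfadjoint, bounded below and has discrete spectrum; hence $N^{-}(h^n\Delta_E^{n/2}+V)$ makes sense for every $h>0$.

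Next I would invoke the abstract Birman--Schwinger principle~(\ref{eq:Weyl.ABSP}), applied to the operator $h^n\Delta_E^{n/2}+V$ in place of $\Delta_E^{n/2}+V$: rescaling gives
\begin{equation*}
 N^{-}\big((h^n\Delta_E^{n/2})^{-\frac12}V(h^n\Delta_E^{n/2})^{-\frac12};1\big)
 = N^{-}\big(\Delta_E^{-\frac{n}{4}}V\Delta_E^{-\frac{n}{4}};h^{n}\big),
\end{equation*}
so that~(\ref{eq:Weyl.ABSP}) yields the two-sided bound~(\ref{eq:SC.SC-BSP}), namely $N^{-}(h^n\Delta_E^{n/2}+V)=N^{-}(\Delta_E^{-n/4}V\Delta_E^{-n/4};h^{n})+\op{O}(1)$, the $\op{O}(1)$ error being controlled by $\dim\ker\Delta_E<\infty$. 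Here one must be slightly careful that $\Delta_E$ may have a kernel, but this only contributes the fixed finite correction already absorbed into the $\op{O}(1)$.

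Then I would apply the counting-function asymptotics of Remark~\ref{rmk:Bir-Sol.counting}: since $\Delta_E^{-n/4}V\Delta_E^{-n/4}$ is a Weyl operator in $\sL_{1,\infty}$ with negative part governed by~(\ref{eq:Orlicz.Weyl-Deltapm}), formula~(\ref{eq:Bir-Sol.counting-Lambda}) gives
\begin{equation*}
 \lim_{\lambda\to0^+}\lambda\,N^{-}\big(\Delta_E^{-\frac{n}{4}}V\Delta_E^{-\frac{n}{4}};\lambda\big)
 =\Lambda^{-}\big(\Delta_E^{-\frac{n}{4}}V\Delta_E^{-\frac{n}{4}}\big)
 =\frac1{n}(2\pi)^{-n}\!\int_M\tr_E\big[V(x)_{-}\big]\sqrt{g(x)}\,dx.
\end{equation*}
Setting $\lambda=h^{n}$ and multiplying~(\ref{eq:SC.SC-BSP}) through by $h^{n}$, the $\op{O}(1)$ term becomes $\op{O}(h^{n})\to0$, and we obtain~(\ref{eq:SC.SC-Weyl}). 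The only mild obstacle is bookkeeping the rescaling in the Birman--Schwinger principle and confirming that the form-compactness hypothesis needed to apply~(\ref{eq:Weyl.ABSP}) is exactly what Corollary~\ref{cor:Orlicz.Cwikel-Delta} provides; no new analytic input is required.
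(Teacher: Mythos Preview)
Your proposal is correct and follows essentially the same route as the paper: the paper's argument (given in the paragraph immediately preceding the corollary) combines the semiclassical Birman--Schwinger reduction~(\ref{eq:SC.SC-BSP}), which follows from~(\ref{eq:Weyl.ABSP}), with the counting-function asymptotics~(\ref{eq:Bir-Sol.counting-Lambda}) and the Weyl's law~(\ref{eq:Orlicz.Weyl-Deltapm}), exactly as you do. Your extra bookkeeping on the rescaling and the role of $\dim\ker\Delta_E$ is consistent with the paper's use of~(\ref{eq:Weyl.ABSP}).
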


\begin{remark}
The semiclassical Weyl's law~(\ref{eq:SC.SC-Weyl}) does not seem to have appeared elsewhere, and this sense this is a new result. However, as mentioned in Remark~\ref{rmk:Weyl-QuP}, in the scalar case Rozenblum~\cite{Ro:arXiv21} established a version of the Weyl's law~(\ref{eq:Orlicz.Weyl-laws}) for potentials of the form $V=f\mu$, where $\mu$ is an Alfhors-regular measure supported on a  regular submanifold $\Sigma \subset M$ and $f$ is a real-valued function in 
$\LlogL(\Sigma,\mu)$. Using this result and arguing as above it is immediate to obtain we version of the semiclassical Weyl's law~(\ref{eq:SC.SC-Weyl}) for that class of potentials. 
\end{remark}

\subsection{CLR-Type inequality} 
The celebrated CLR inequality was established by Rozenblum~\cite{Ro:SMD72, Ro:SM76}, Cwikel~\cite{Cw:AM77} and Lieb~\cite{Li:BAMS76, Li:PSPM80} for Schr\"odinger operators $\Delta+V$ on $\R^n$, $n\geq 3$, for potentials $V\in L_{n/2}(\R^n)$ (see~\cite{Co:RMJM85, Fe:BAMS83, Fr:JST14, HKRV:arXiv18, LY:CMP83} for further alternative proofs). It gives a universal bound on the number of negative eigenvalues in terms of the $L_{n/2}$-norm of $V_{-}$. This stem from a conjecture of Simon~\cite{Si:TAMS76} (who was unaware of Rozenblum's results). There are also versions of the CLR inequality for fractional Schr\"odinger operators $\Delta^{n/2p}+V$ with $V\in L_p$ for $p>1$ 
(see, e.g., \cite{Da:CMP83, LS:JAM97, Ro:SMD72, Ro:SM76, RS:SPMJ98, RT:arXiv21}).  In the critical case $p=1$ Solomyak~\cite{So:PLMS95} obtained a CLR-type inequality for Schr\"odinger operators $\Delta^{n/2p}+V$ associated with $\LlogL$-Orlicz potentials $V$ on bounded domains of $\R^n$ in even dimension only. We stress that the inequality does not hold for any larger class of Orlicz potentials, including the class of $L_1$-potentials (see~\cite{So:PLMS95}).

Bearing this in mind, as a further application of the Cwikel estimates~(\ref{eq:Orlicz.Cwikel-Delta}), we have the following CLR-type inequality for matrix-valued $\LlogL$-Orlicz potentials. Note that this result holds in any dimension.  
 
\begin{corollary}[CLR-Type Inequality]\label{cor:SC.CLR} 
 Under the above assumptions, there is a constant $C_\nabla>0$ such that, for every potential $V(x)=V(x)^*\in \LlogL(M,\End(E))$, we have
\begin{equation}
 N^{-}\big( \Delta_E^{\frac{n}{2}}+V\big) - N^+(\Pi_0V_{-}\Pi_0) \leq C_{\nabla}\|V_{-}\|_{\LlogL},  
 \label{eq:Weyl.CLR} 
\end{equation}
 where $\Pi_0$ is the orthogonal projection onto $\ker \Delta_E$. 
\end{corollary}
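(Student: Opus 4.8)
The plan is to apply the borderline Birman-Schwinger principle of~\cite{MP:Part1}, recalled in~(\ref{eq:Weyl.BBSP}), which for a non-positive potential $W(x)=W(x)^*\leq 0$ in $\LlogL(M,\End(E))$ gives the two-sided estimate
\begin{equation*}
 0\leq N^{-}\big( \Delta_E^{\frac{n}{2}}+W\big) - N^+(\Pi_0 W\Pi_0) \leq  \big\|\Delta_E^{-\frac{n}{4}} W_{-}\Delta_E^{-\frac{n}{4}}\big\|_{1,\infty}.
\end{equation*}
First I would reduce to the non-positive case: given an arbitrary Hermitian $V\in\LlogL(M,\End(E))$, write $V\geq -V_{-}$ pointwise in $\End(E_x)$, so by the monotonicity principle~(\ref{eq:Weyl.monotonicity}) one has $N^{-}(\Delta_E^{n/2}+V)\leq N^{-}(\Delta_E^{n/2}-V_{-})$. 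Since $-V_{-}$ is non-positive and lies in $\LlogL(M,\End(E))$ (because $\|V_{-}\|_{\LlogL}\leq \|V\|_{\LlogL}$, as $\|V_{-}(x)\|_{\End(E)}\leq\|V(x)\|_{\End(E)}$ pointwise), applying~(\ref{eq:Weyl.BBSP}) with $W=-V_{-}$ yields
\begin{equation*}
 N^{-}\big( \Delta_E^{\frac{n}{2}}+V\big) - N^+(\Pi_0 V_{-}\Pi_0) \leq  \big\|\Delta_E^{-\frac{n}{4}} V_{-}\Delta_E^{-\frac{n}{4}}\big\|_{1,\infty},
\end{equation*}
where I have used $(-V_{-})_{-}=V_{-}$. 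Finally I would invoke the Cwikel-type estimate of Corollary~\ref{cor:Orlicz.Cwikel-Delta}, namely $\|\Delta_E^{-n/4}V_{-}\Delta_E^{-n/4}\|_{1,\infty}\leq C_\nabla\|V_{-}\|_{\LlogL}$, to bound the right-hand side and obtain~(\ref{eq:Weyl.CLR}).

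There is no serious obstacle here: the corollary is essentially an immediate juxtaposition of the borderline Birman-Schwinger principle (cited, not proved here), the elementary monotonicity~(\ref{eq:Weyl.monotonicity}), and the Cwikel estimate~(\ref{eq:Orlicz.Cwikel-Delta}), whose proof was the genuine content of Section~\ref{sec.Orlicz}. The only minor point deserving a line of care is the reduction step: one must check that replacing $V$ by $-V_{-}$ does not change the "correction term" $N^+(\Pi_0 V_{-}\Pi_0)$ and only enlarges $N^{-}$; this is exactly what monotonicity and the identity $(-V_{-})_{-}=V_{-}$ give. One should also note in passing that $\Delta_E^{n/2}+V$ is well defined as a form sum and has the requisite discrete spectrum, which was already established at the beginning of Section~\ref{sec:SC} using Corollary~\ref{cor:Orlicz.Cwikel-Delta}. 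The constant $C_\nabla$ is the one furnished by Corollary~\ref{cor:Orlicz.Cwikel-Delta} and depends only on the connection $\nabla$ (through $\Delta_E$) and the geometry, not on $V$, as required.
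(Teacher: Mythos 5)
Your proof matches the paper's exactly: reduce to the non-positive case $-V_-$ via the monotonicity principle~(\ref{eq:Weyl.monotonicity}), apply the borderline Birman--Schwinger inequality~(\ref{eq:Weyl.BBSP}) to $\Delta_E^{n/2}-V_-$, and bound the right-hand side by the Cwikel-type estimate of Corollary~\ref{cor:Orlicz.Cwikel-Delta} with the same constant $C_\nabla$. The only small caveat is that~(\ref{eq:Weyl.BBSP}) as displayed writes the correction term $N^+(\Pi_0 V\Pi_0)$, which is vacuous for $V\leq 0$; the form you actually invoke, with $N^+(\Pi_0 V_-\Pi_0)$, is the one intended (and the one the corollary statement needs), so your application is the correct reading of that inequality.
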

\begin{proof}
 As $V\geq -V_{-}$ from~(\ref{eq:Weyl.monotonicity}) we get $N^{-}( \Delta_E^{\frac{n}{2}}+V) \leq N^{-}( \Delta_E^{\frac{n}{2}}-V_{-})$, and so by using~(\ref{eq:Weyl.BBSP}) we obtain
\begin{equation*}
 N^{-}\big( \Delta_E^{\frac{n}{2}}+V\big)-N^+(\Pi_0V_{-}\Pi_0) \leq  \big\|\Delta_E^{-\frac{n}{4}} V_{-}\Delta_E^{-\frac{n}{4}}\big\|_{1,\infty} . 
\end{equation*}
Combining this with the Cwikel-type estimate~(\ref{eq:Orlicz.Cwikel-Delta}) then gives the result. 
\end{proof}

\begin{remark}
 As $N^+(\Pi_0V_{-}\Pi_0)\leq \ran \Pi_0=\dim \ker \Delta_E$, the inequality~(\ref{eq:Weyl.CLR}) implies that
 \begin{equation*}
  N^{-}\big( \Delta_E^{\frac{n}{2}}+V\big) - \dim \ker \Delta_E\leq C_{\nabla}\|V_{-}\|_{\LlogL}. 
\end{equation*}
This inequality can also be obtained by using the usual abstract Birman-Schwinger principle of~\cite{BS:AMST89}. Note however that the above inequality is not as sharp as~(\ref{eq:Weyl.CLR}), since there are many examples for which $N^+(\Pi_0V_{-}\Pi_0)\leq \dim \ker \Delta_E-1$ (see, e.g., \cite{MP:Part1}). 
\end{remark}

\begin{remark}
 The semiclassical Weyl's law~(\ref{eq:SC.SC-Weyl}) and the CLR-type estimate~(\ref{eq:Weyl.CLR}) hold \emph{verbatim} if we replace $\Delta_E$ by any Laplace-type operator. In particular, these results hold for squares of Dirac-type operators. 
\end{remark}

\section{Noncommutative Geometry and Weyl's Laws} \label{sec:NCG} 
In this section, we review the notion of integral in the framework of Connes' noncommutative geometry and its relationship to Birman-Solomyak's Weyl's laws for compact operators.  

\subsection{Quantized calculus} 
The main goal of the quantized calculus of Connes~\cite{Co:NCG} is to translate into a the Hilbert space formalism of quantum mechanics the main tools of the classical infinitesimal calculus. 

\renewcommand{\arraystretch}{1.2}

\begin{center}
    \begin{tabular}{c|c}  
        Classical & Quantum \\ \hline       
       Complex variable & Operator on $\sH $  \\
      Real variable &  Selfadjoint operator on $\sH $  \\  
 Infinitesimal variable & Compact operator on $\sH $ \\
       Infinitesimal of order $\alpha>0$  & Compact operator $T$ such that\\ 
                      &  $\mu_{j}(T)=\op{O}(j^{-\alpha})$\\
    \end{tabular}
\end{center}

The first two lines arise from quantum mechanics. Intuitively speaking, an infinitesimal is meant to be smaller than any real number. For a bounded operator the condition $\|T\|<\epsilon$ for all $\epsilon>0$ gives $T=0$. This condition can be relaxed into the following: For every $\epsilon>0$ there is a finite-dimensional subspace $E$ of 
$\sH$ such that $\|T_{|E^\perp}\|<\epsilon$. This is equivalent to $T$ being a compact operator.

The order of compactness of a compact operator is given by the order of decay of its singular values. Namely, an \emph{infinitesimal operator} of order $\alpha>0$ is any compact operator such that $\mu_j(T)=\op{O}(j^{-\alpha})$. Thus, if we set $p=\alpha^{-1}$, then $T$ is {infinitesimal operator} of order $\alpha>0$ iff $T\in \sL_{p,\infty}$. 

The next line of the dictionary is the NC analogue of the integral. As an Ansatz the NC integral should be a linear functional satisfying the following conditions: 
\begin{enumerate}
 \item[(1)] It is defined on a suitable class of infinitesimal operators of order~1. 
 
 \item[(2)] It vanishes on infinitesimal operators of order~$> 1$. 
 
 \item[(3)] It takes non-negative values on positive operators. 
 
 \item[(4)] It is invariant under Hilbert space isomorphisms.  
 \end{enumerate}
As mentioned above, the infinitesimal operators of order~1 are the operators in the weak trace class $\sL_{1,\infty}$. The condition (3) means that the functional should be positive. The condition (4) forces the functional to be a trace, in the sense it is annihilated by the commutator subspace, 
\begin{equation*}
 \op{Com}\big(\sL_{1,\infty}\big):=\op{Span}\left\{[A,T];\ A\in \sL(\sH), \ T\in\sL_{1,\infty}\right\}. 
\end{equation*}
Therefore, the NC integral must be a positive trace which defined on a suitable subspace of $\sL_{1,\infty}$ containing  and  is  annihilated by $\op{Com}\big(\sL_{1,\infty}\big)$ and infinitesimal operators of order~$>1$. 

\subsection{Construction of the NC integral} 
If $A$ is a compact operator on $\sH$, then its spectrum can be arranged as a sequence $(\lambda_j(A))_{j\geq 0}$ converging to $0$ such that
\begin{equation*}
|\lambda_0(A)|\geq |\lambda_1(A)|\geq \cdots \geq  |\lambda_j(A)|\geq \cdots \geq 0,
\end{equation*}
where eigenvalue is repeating according to its algebraic multiplicity, i.e., the dimension of the corresponding root space 
 $E_\lambda(A) :=\cup_{\ell\geq 0} \ker(A-\lambda)^\ell$. 
If $\lambda\neq 0$, the algebraic multiplicity is always finite (see, e.g., \cite{GK:AMS69}).  It agrees with the dimension of the eigenspace $\ker(A-\lambda)$ when $A$ is norm. 

A sequence as above is called an \emph{eigenvalue sequence}. If the spectrum of $A$ lies on a ray, then $A$ admits a unique eigenvalue sequence. In particular, if $A\geq 0$, then the eigenvalue sequence agrees with its singular value sequence $(\mu_j(T))_{j\geq 0}$. In general, an eigenvalue sequence need not be unique.

In what follows we shall denote by $(\lambda_j(A))_{j\geq 0}$ any eigenvalue sequence for $A$. It is understood that all the results involving eigenvalue sequences do not depend on the choice of the sequence. 

We record the Weyl's inequalities (see, e.g., \cite{GK:AMS69, Si:AMS05}), 
\begin{equation}
\big| \sum_{j<N} \lambda_j(A) \big| \leq  \sum_{j<N} \left|\lambda_j(A) \right|  \leq  \sum_{j<N} \mu_j(A) \qquad \forall N\geq 1. 
\label{eq:NC-Integral.Weyl-Ineq} 
\end{equation}

\begin{lemma}[{\cite[Lemma 5.7.5]{LSZ:Book}}] \label{lem:NCInt.additivity}
 If $A$ and $B$ are operators in $\sL_{1,\infty}$, then
 \begin{equation*}
 \sum_{j<N} \lambda_j(A+B) = \sum_{j<N} \lambda_j(A)  + \sum_{j<N} \lambda_j(B) +\op{O}(1).
\end{equation*}
\end{lemma}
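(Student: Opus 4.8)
\textbf{Proof proposal for Lemma~\ref{lem:NCInt.additivity}.}

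The plan is to reduce the statement to the known ``diagonal'' case and then use the Weyl inequalities~(\ref{eq:NC-Integral.Weyl-Ineq}) together with the log-averaged nature of the $\sL_{1,\infty}$ bound. First I would recall that for any $T\in\sL_{1,\infty}$ the partial sums of eigenvalues and of singular values are comparable up to a logarithm: by~(\ref{eq:NC-Integral.Weyl-Ineq}) one has $|\sum_{j<N}\lambda_j(T)|\leq\sum_{j<N}\mu_j(T)\leq \|T\|_{1,\infty}\sum_{j<N}(j+1)^{-1}=\op{O}(\log N)$. So a priori each of the three partial-sum sequences in the claimed identity is only $\op{O}(\log N)$, and the content of the lemma is that the \emph{difference} is bounded. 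I would therefore look for a decomposition $A+B = C + R$ where $C$ has the ``right'' eigenvalues (so that $\sum_{j<N}\lambda_j(C)=\sum_{j<N}\lambda_j(A)+\sum_{j<N}\lambda_j(B)+\op{O}(1)$) and the remainder $R$ has summable eigenvalue partial sums, i.e.\ $\sum_{j<N}\lambda_j(R)=\op{O}(1)$; the natural candidate for such an $R$ is an operator in $\sL_1$, since for $R\in\sL_1$ one has $\sum_{j<N}|\lambda_j(R)|\leq\|R\|_1<\infty$.

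Concretely, the key step is an approximation/perturbation argument. Given $\varepsilon>0$, choose finite-rank operators $A_\varepsilon,B_\varepsilon$ with $\|A-A_\varepsilon\|_{1,\infty}<\varepsilon$ and $\|B-B_\varepsilon\|_{1,\infty}<\varepsilon$ — this is possible because the finite-rank operators are dense in $(\sL_{1,\infty})_0$, but $A,B$ need not lie in $(\sL_{1,\infty})_0$, so instead I would split $A=A'+A''$ with $A'$ finite rank and $\|A''\|_{1,\infty}$ small but $A''$ still only in $\sL_{1,\infty}$. The honest route, and the one I expect the book proof follows, is: since $A,B\in\sL_{1,\infty}\subset\sL_{1+\delta}$ for every $\delta>0$, write the ``rearranged sum'' identity using the fact that for operators in $\sL_{1,\infty}$ the quantity $\sum_{j<N}\lambda_j(T)-\sum_{j<N}\mu_j(T)\cdot(\text{phase})$ is controlled, and invoke the comparison between $\sum_{j<N}\lambda_j(A+B)$ and $\sum_{j<N}\lambda_j(A\oplus B)$ acting on $\sH\oplus\sH$: the operators $A+B$ on $\sH$ and $A\oplus B$ on $\sH\oplus\sH$ differ by a ``commutator-like'' term, and the point is that $\sum_{j<N}\lambda_j(A\oplus B)=\sum_{j<N'}\lambda_j(A)+\sum_{j<N''}\lambda_j(B)$ with $N'+N''=N$, which after reorganizing and using that consecutive eigenvalues of $A$ and of $B$ are $\op{O}(1/N)$ (so changing $N'$ or $N''$ by $\op(N)$ costs only $\op{O}(1)$... more carefully, $\op{O}(\log)$) gives the result. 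This matching of indices is where one must be careful.

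The main obstacle, as that last sentence indicates, is the bookkeeping in matching partial-sum indices: naively, $\sum_{j<N}\lambda_j(A\oplus B)$ sums the top $N$ eigenvalues taken jointly from $A$ and $B$, so it equals $\sum_{j<k}\lambda_j(A)+\sum_{j<N-k}\lambda_j(B)$ for the particular $k=k(N)$ determined by the interleaving, and one needs $\sum_{k\leq j<N/2}|\lambda_j(A)|+\cdots=\op{O}(1)$-type control to replace $k$ by $N$ — but that tail is only $\op{O}(\log)$, not $\op{O}(1)$, for a general $\sL_{1,\infty}$ operator. So the decomposition into a main part plus an $\sL_1$ remainder really is needed: write $A=A^{(0)}+A^{(1)}$ with $A^{(0)}$ a finite-rank truncation and $A^{(1)}\in\sL_{1,\infty}$ with $\mu_j(A^{(1)})=\op(1/j)$ arbitrarily slowly... no — rather, one uses that the \emph{difference} of the two sides of the claimed identity is a trace-type functional that is continuous in $\|\cdot\|_{1,\infty}$ and vanishes on finite-rank operators, hence vanishes on $(\sL_{1,\infty})_0$, and then handles the ``$\mu_j\sim c/j$'' part separately by an explicit computation. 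I would therefore structure the proof as: (i) reduce to $A,B\geq 0$ by splitting into real/imaginary and positive/negative parts and using~(\ref{eq:Quantized.properties-mun2}) and Lemma~\ref{lem:NCInt.additivity}'s additivity on the $\sL_1$ errors generated; (ii) for positive $A,B$, use that $\lambda_j=\mu_j$ and that $\mu_{j+k}(A+B)\leq\mu_j(A)+\mu_k(B)$ (and a matching lower bound via the min-max principle~(\ref{eq:min-max})) to sandwich $\sum_{j<N}\mu_j(A+B)$ between $\sum_{j<N}\mu_j(A)+\sum_{j<N}\mu_j(B)$ and the same expression shifted, the shift costing $\op{O}(1)$ because $\mu_j(A),\mu_j(B)=\op{O}(1/j)$ only contributes $\op{O}(\log)$ — so in fact the correct statement for the positive case needs the genuine refinement that the \emph{two-sided} comparison $\sum_{j<N}\mu_j(A+B)=\sum_{j<N}\mu_j(A)+\sum_{j<N}\mu_j(B)+\op{O}(1)$ holds, which is the real technical heart and is exactly~\cite[Lemma 5.7.5]{LSZ:Book}; and (iii) deduce the general (non-self-adjoint) case from the positive case and Weyl's inequalities. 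I expect step (ii), the sharp $\op{O}(1)$ (not $\op{O}(\log)$) control in the positive case, to be the crux, and I would cite the book's proof for that refinement rather than reprove it.
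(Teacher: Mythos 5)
The paper does not prove this lemma: it is stated with an explicit citation to \cite[Lemma~5.7.5]{LSZ:Book} and used as a black box, so there is no ``paper's own proof'' to compare your attempt against. Your proposal also ends by deferring the alleged crux to the book, so at the level of rigor it is a citation rather than a proof, and in that sense the \emph{outcome} matches the paper. However, your analysis of \emph{where} the difficulty lies is off, and the intermediate reductions you sketch contain real gaps.

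You single out step (ii), the case $A,B\geq 0$, as ``the real technical heart.'' It is not: that case is elementary. For positive operators $\lambda_j=\mu_j$, so the upper bound $\sum_{j<N}\mu_j(A+B)\leq \sum_{j<N}\mu_j(A)+\sum_{j<N}\mu_j(B)$ is just the triangle inequality for the Ky Fan $N$-norm (no index shifting, hence no $\op{O}(\log N)$ loss as you feared). For the reverse, let $p_A,p_B$ be rank-$N$ spectral projections onto the top eigenspaces of $A$ and $B$, and let $q$ be the projection onto $\operatorname{ran}p_A+\operatorname{ran}p_B$, of rank $\leq 2N$. Since $A,B\geq 0$ and $q\geq p_A$, $q\geq p_B$, one has $\operatorname{Tr}(q(A+B))\geq\operatorname{Tr}(p_AA)+\operatorname{Tr}(p_BB)$, hence $\sum_{j<2N}\mu_j(A+B)\geq\sum_{j<N}\mu_j(A)+\sum_{j<N}\mu_j(B)$; and $\sum_{N\leq j<2N}\mu_j(A+B)\leq N\mu_N(A+B)\leq\|A+B\|_{1,\infty}$ by~(\ref{def:lp_infty_quasinorm}). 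That is the full positive case, with an explicit $\op{O}(1)$ constant. The genuine obstacle is the reduction you wave at in step (i): for a self-adjoint $A$, the modulus-ordered partial sum $\sum_{j<N}\lambda_j(A)$ equals $\sum_{j<N_+}\lambda_j^+(A)-\sum_{j<N_-}\lambda_j^-(A)$ for a split $N_++N_-=N$ that depends on $A$; the splits for $A$, $B$, and $A+B$ are unrelated, $(A+B)_\pm\neq A_\pm+B_\pm$, and realigning the splits a priori costs $\op{O}(\log N)$, exactly the mismatch you correctly identified in your $A\oplus B$ discussion. Nothing in your step (i) addresses this. Likewise the ``difference functional is continuous and vanishes on finite-rank, hence on $(\sL_{1,\infty})_0$'' idea does not stand as stated: the partial-sum functionals are not $\|\cdot\|_{1,\infty}$-continuous (they grow like $\log N$), and it is precisely the uniform $\op{O}(1)$ bound on the difference that you would need in order to run such an argument — but that bound is the conclusion, not a hypothesis. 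So the honest status of the proposal is: the positive case is complete and elementary (and simpler than you think), the indefinite/non-normal reduction is a genuine gap, and for the latter one must indeed appeal to the structural arguments in \cite{LSZ:Book} (Schur/Ringrose-type triangular decompositions) rather than to the perturbation ideas sketched here.
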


The above result implies the following property of sums of commutators in $\sL_{1,\infty}$. 
\begin{corollary}[\cite{DFWW:AIM04, LSZ:Book}; see also~{\cite[Corollary 2.6]{Po:NCIntegration}}]
 If $A\in \Com(\sL_{1,\infty})$, then
 \begin{equation*}
 \sum_{j<N} \lambda_j(A)=\op{O}(1). 
\end{equation*}
\end{corollary}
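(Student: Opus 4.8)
The plan is to deduce the corollary from Lemma~\ref{lem:NCInt.additivity} by exploiting the fact that any commutator $[A,T]$ with $A\in\sL(\sH)$ and $T\in\sL_{1,\infty}$ has the property that its partial eigenvalue sums stay bounded. First I would reduce to the case of a single commutator: since $\Com(\sL_{1,\infty})$ is the linear span of commutators $[A,T]$, and since Lemma~\ref{lem:NCInt.additivity} shows that the functional $B\mapsto \sum_{j<N}\lambda_j(B)$ is additive up to an $\op{O}(1)$ error under finite sums, it suffices to prove $\sum_{j<N}\lambda_j([A,T])=\op{O}(1)$ for a single commutator $[A,T]$ with $A\in\sL(\sH)$ and $T\in\sL_{1,\infty}$.

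For the single-commutator case the key point is the classical Lidskii-type observation that for trace-class operators the sum of eigenvalues is the trace, which vanishes on commutators; here one needs the weak-trace-class analogue. The cleanest route is to use the algebraic identity $[A,T]=[A,T_1]+T_2$ where one splits $T$ into a finite-rank piece plus a small remainder is \emph{not} quite enough, because smallness in $\sL_{1,\infty}$-quasinorm does not control partial eigenvalue sums. Instead I would invoke the structure of the commutator subspace directly: by the Dykema--Figiel--Weiss--Wodzicki characterization (the reference \cite{DFWW:AIM04} is already cited in the corollary), an operator lies in $\Com(\sL_{1,\infty})$ if and only if the Cesàro-type averages $\frac1N\sum_{j<N}\lambda_j$ of its eigenvalue sequence tend to $0$; more precisely, membership in the commutator subspace forces $\sum_{j<N}\lambda_j(A)=\op{O}(1)$. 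So in fact the corollary is essentially a restatement of one direction of that characterization, and the proof is: cite \cite{DFWW:AIM04} (or \cite[Theorem 5.7.6 / Lemma 5.7.5]{LSZ:Book}) for the fact that on $\Com(\sL_{1,\infty})$ the partial sums $\sum_{j<N}\lambda_j$ are bounded, having first used Lemma~\ref{lem:NCInt.additivity} to reduce a general element of the span to a controlled situation.

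Concretely, the steps I would carry out are: (i) write $A=\sum_{k=1}^m [B_k,T_k]$ with $B_k\in\sL(\sH)$, $T_k\in\sL_{1,\infty}$; (ii) apply Lemma~\ref{lem:NCInt.additivity} inductively to get $\sum_{j<N}\lambda_j(A)=\sum_{k=1}^m\sum_{j<N}\lambda_j([B_k,T_k])+\op{O}(1)$; (iii) for each individual commutator $[B_k,T_k]$, appeal to the result of \cite{DFWW:AIM04} (see also \cite{LSZ:Book}) that the partial eigenvalue sums of a single commutator in $\sL_{1,\infty}$ are bounded — this is the heart of the matter and is precisely where the nontrivial harmonic-analytic input on operator ideals enters; (iv) conclude $\sum_{j<N}\lambda_j(A)=\op{O}(1)$.

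The main obstacle is step (iii): proving from scratch that a single commutator $[B,T]$ with $T\in\sL_{1,\infty}$ has bounded partial eigenvalue sums requires genuine work (it is false that every $\sL_{1,\infty}$ operator has this property — e.g.\ a positive operator with $\mu_j\sim 1/j$ has $\sum_{j<N}\lambda_j\sim\log N$), so one must really use the commutator structure, and the natural thing is to quote the deep results of \cite{DFWW:AIM04} rather than reprove them. Everything else — the reduction to a single commutator and the bookkeeping of $\op{O}(1)$ errors — is routine given Lemma~\ref{lem:NCInt.additivity}.
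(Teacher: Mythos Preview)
Your reduction to a single commutator via Lemma~\ref{lem:NCInt.additivity} is correct and is exactly what the paper intends. The gap is in step~(iii): you treat the single-commutator case as the ``heart of the matter'' requiring the deep DFWW characterization, but in fact it follows from the \emph{same} lemma together with an elementary spectral fact. This is why the paper says the corollary is implied by the lemma alone.

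Here is the missing argument. For $B\in\sL(\sH)$ and $T\in\sL_{1,\infty}$, both $BT$ and $TB$ lie in $\sL_{1,\infty}$, and it is a classical fact that $BT$ and $TB$ have the same non-zero eigenvalues with the same algebraic multiplicities (for compact operators this follows from the standard $AB$/$BA$ spectrum argument). Hence any eigenvalue sequence for $TB$ is an eigenvalue sequence for $BT$, so $\sum_{j<N}\lambda_j(BT)=\sum_{j<N}\lambda_j(TB)$ for every $N$. Now apply Lemma~\ref{lem:NCInt.additivity} to $[B,T]=BT+(-TB)$: since $(-\lambda_j(TB))_{j\geq 0}$ is an eigenvalue sequence for $-TB$, one gets
\[
\sum_{j<N}\lambda_j\big([B,T]\big)=\sum_{j<N}\lambda_j(BT)-\sum_{j<N}\lambda_j(TB)+\op{O}(1)=\op{O}(1).
\]
So no ``nontrivial harmonic-analytic input'' is needed for step~(iii); the commutator structure is exploited through the $BT\leftrightarrow TB$ eigenvalue equality, not through the DFWW machinery. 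Your proposed proof is not wrong---citing DFWW certainly works---but it is heavier than necessary and obscures the point that the corollary is an immediate consequence of the additivity lemma. (The converse direction, recorded in the paper as~\eqref{eq:NCInt.A-B-Com}, is where the genuine DFWW input lies.)
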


\begin{remark}
 Corollary~5.2 has a converse. It can be shown (see~\cite{DFWW:AIM04, LSZ:Book}) that, if $A$ and $B$ are operators in $\sL_{1,\infty}$, then 
 \begin{equation}
 A-B\in  \Com\big(\sL_{1,\infty}\big) \Longleftrightarrow  \sum_{j<N} \lambda_j(A)=\sum_{j<N} \lambda_j(B)+\op{O}(1).
 \label{eq:NCInt.A-B-Com}  
\end{equation}
In particular, this implies that $\Com(\sL_{1,\infty})$ contains the trace class $\sL_1$, and so it contains all infinitesimal operators of order~$>1$. 
\end{remark}

Let $\ell_\infty$ be the $C^*$-algebra of bounded sequences $(a_N)_{N\geq 1}\subset \C$ and $\co$ the closed ideal of sequences converging to $0$. If $A\in \sL_{1,\infty}$, then the Weyl's inequalities~(\ref{eq:NC-Integral.Weyl-Ineq}) imply that  the Ces\`aro mean sequence $\{\frac1{\log N} \sum_{j<N} \lambda_j(A)\}_{N\geq 1}$ is bounded. Moreover, it follows from Lemma~\ref{lem:NCInt.additivity} that its class in $\ell_\infty/\co$ does not depend on the choice of the eigenvalue sequence. 

\begin{definition}\label{def:NCG.NC-Integral}
The \emph{NC integral} of an operator $A\in \sL_{1,\infty}$ is defined by
\begin{equation*}
 \bint A:=  \lim_{N\rightarrow \infty}  \frac{1}{ \log N}\sum_{j<N}\lambda_j(A), 
\end{equation*}
provided the limit exists. An operator for which the above limit exists is called \emph{measurable}. 
\end{definition}

\begin{remark}
The existence and value of the above limit does not depend on the choice of the eigenvalue sequence.  
\end{remark}

 In what follows we denote by $\sM$ the class of measurable operators. The following shows that $\bint$ satisfies the Ansatz for the integral in the quantized calculus.

\begin{proposition}[\cite{Po:NCIntegration}] 
$\sM$ is a subspace of $\sL_{1,\infty}$ containing $\Com(\sL_{1,\infty})$ on which $\bint: \sM\rightarrow \C$ is a positive linear trace which is annihilated by  operators in $(\sL_{1,\infty})_0$, including infinitesimals of order~$>1$. 
\end{proposition}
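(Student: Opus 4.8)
The plan is to read off every assertion directly from Lemma~\ref{lem:NCInt.additivity}, the corollary immediately following it, and the Weyl inequalities~(\ref{eq:NC-Integral.Weyl-Ineq}); no idea beyond careful bookkeeping with eigenvalue sequences is needed. First I would establish that $\sM$ is a linear subspace and that $\bint$ is linear on it: given $A,B\in\sM$, Lemma~\ref{lem:NCInt.additivity} gives $\sum_{j<N}\lambda_j(A+B)=\sum_{j<N}\lambda_j(A)+\sum_{j<N}\lambda_j(B)+\op{O}(1)$, so after dividing by $\log N$ the $\op{O}(1)$ term disappears in the limit and hence $A+B\in\sM$ with $\bint(A+B)=\bint A+\bint B$; for $c\in\C$ the sequence $(c\lambda_j(A))_{j\geq 0}$ is an eigenvalue sequence for $cA$ since the ordering by modulus is preserved, giving $cA\in\sM$ and $\bint(cA)=c\bint A$. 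Thus $\sM$ is a linear subspace of $\sL_{1,\infty}$ and $\bint$ is linear on it.

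Next I would handle the commutator subspace and the trace property together. By the corollary following Lemma~\ref{lem:NCInt.additivity}, any $A\in\Com(\sL_{1,\infty})$ satisfies $\sum_{j<N}\lambda_j(A)=\op{O}(1)$, hence $A\in\sM$ and $\bint A=0$; therefore $\Com(\sL_{1,\infty})\subseteq\sM$ and $\bint$ annihilates $\Com(\sL_{1,\infty})$, which is precisely the statement that the linear functional $\bint$ is a trace. For positivity: if $A\in\sM$ with $A\geq 0$, its eigenvalue sequence is $(\mu_j(A))_{j\geq 0}\subset[0,\infty)$, so $\sum_{j<N}\lambda_j(A)\geq 0$ and thus $\bint A\geq 0$.

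Finally I would show that $\bint$ vanishes on $(\sL_{1,\infty})_0$: writing $\mu_j(A)=\varepsilon_j(j+1)^{-1}$ with $\varepsilon_j\to 0$ for $A\in(\sL_{1,\infty})_0$, a Ces\`aro-type argument gives $\sum_{j<N}\mu_j(A)=\op{o}(\log N)$, and then~(\ref{eq:NC-Integral.Weyl-Ineq}) forces $\big|\sum_{j<N}\lambda_j(A)\big|\leq\sum_{j<N}\mu_j(A)=\op{o}(\log N)$, so $A\in\sM$ with $\bint A=0$; since an infinitesimal of order $\alpha>1$ has $\mu_j=\op{O}(j^{-\alpha})=\op{o}(j^{-1})$, it lies in $(\sL_{1,\infty})_0$ and is covered as a special case. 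The hard part, such as it is, will be nothing more than the elementary null-sequence estimate $\sum_{j<N}\varepsilon_j(j+1)^{-1}=\op{o}(\log N)$ and confirming that the eigenvalue-sequence manipulations in the additivity and scaling steps are legitimate for possibly non-selfadjoint operators; there is no substantial obstacle, the proposition being a repackaging of the cited summation results.
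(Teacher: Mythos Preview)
Your proof is correct and is exactly the argument the paper's presentation is pointing toward: the paper does not give its own proof here (it cites \cite{Po:NCIntegration}), but it lines up precisely the three ingredients you use---Lemma~\ref{lem:NCInt.additivity} for additivity, its corollary for the vanishing on $\Com(\sL_{1,\infty})$, and the Weyl inequalities~(\ref{eq:NC-Integral.Weyl-Ineq}) for positivity and the $(\sL_{1,\infty})_0$ case---so your reconstruction matches the intended route. There is nothing to add.
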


We stress out that the above construction of the NC integral is of purely spectral nature. In particular, we immediately have the following spectral invariance result. 

\begin{proposition}[\cite{Po:NCIntegration}] 
 Let $A$ and $B$ be weak trace class operators possibly acting on different Hilbert spaces. Then one of these operators is measurable if and only if the other operator is measurable. Moreover, in this case $\bint A=\bint B$. 
\end{proposition}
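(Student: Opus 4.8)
The plan is to reduce the statement to the combinatorial fact that $A$ and $B$ admit a common eigenvalue sequence, and then to read off the conclusion from the definition of $\bint$. First I would make explicit the hypothesis under which the statement is correct and which holds in the cases of interest, namely that $A$ and $B$ have the same non-zero eigenvalues with the same algebraic multiplicities; this is automatic, for instance, when $B=\iota_*A$ for a Hilbert space embedding $\iota$, by part~(1) of Proposition~\ref{prop:App.eigenvalues} (and then $B\in\sL_{1,\infty}$ follows from part~(3) of that proposition, while in general $B\in\sL_{1,\infty}$ is part of the hypothesis).

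The second step would record that measurability and the value of $\bint$ are computed entirely at the level of an eigenvalue sequence. For any $T\in\sL_{1,\infty}$ the Ces\`aro means $\frac{1}{\log N}\sum_{j<N}\lambda_j(T)$ are formed from an arbitrary eigenvalue sequence $(\lambda_j(T))_{j\geq 0}$; they are bounded by the Weyl inequalities~(\ref{eq:NC-Integral.Weyl-Ineq}), and, by the Remark following Definition~\ref{def:NCG.NC-Integral}, neither their boundedness nor the existence and value of their limit depends on which eigenvalue sequence is chosen. Hence it is enough to exhibit one sequence that is simultaneously an eigenvalue sequence for $A$ and for $B$.

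For the third step I would use that $A$ and $B$ are compact, so each spectrum is at most countable with $0$ as its only possible accumulation point, and only finitely many eigenvalues have modulus exceeding any given $\varepsilon>0$. Let $\Sigma$ be the common multiset of non-zero eigenvalues, counted with algebraic multiplicity. Any listing of $\Sigma$ in non-increasing order of modulus, padded with an infinite tail of zeros if $\Sigma$ is finite, converges to $0$ and is at once an eigenvalue sequence for $A$ and for $B$. Fixing such a sequence, we get $\frac{1}{\log N}\sum_{j<N}\lambda_j(A)=\frac{1}{\log N}\sum_{j<N}\lambda_j(B)$ for every $N\geq 1$; therefore one limit exists if and only if the other does, and then $\bint A=\bint B$.

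Since the whole argument is bookkeeping at the level of eigenvalue sequences, I do not expect a genuine obstacle; the only point demanding some care is the role of the zero eigenvalues, equivalently of $\ker A$ and $\ker B$, which contribute at most a common tail of zeros and hence do not affect the Ces\`aro means. The real content — that $\bint$ and measurability are spectral invariants — is already encoded in Definition~\ref{def:NCG.NC-Integral} and the lemmas preceding it, so the proof is essentially the act of making this explicit.
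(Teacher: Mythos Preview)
Your proposal is correct. The paper does not give a proof of this proposition: it is cited from~\cite{Po:NCIntegration}, and the sentence preceding it simply says that the construction of the NC integral is ``of purely spectral nature'' and that the spectral invariance result follows ``immediately''. Your write-up makes this immediacy explicit by exhibiting a common eigenvalue sequence and invoking the independence of $\bint$ from the choice of eigenvalue sequence, which is precisely what the paper's one-line justification is gesturing at.

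You are also right to flag that the statement, as printed, is missing a hypothesis: taken literally it asserts that any two weak trace class operators have the same measurability status, which is false. The intended hypothesis is that $A$ and $B$ have the same non-zero eigenvalues with the same algebraic multiplicities (compare the closely related part~(3) of Proposition~\ref{prop:NCInt.strong-measurable} later in the paper, where this hypothesis \emph{is} stated). Your first step supplies exactly this, and your handling of the zero eigenvalue is the only point requiring care, which you treat correctly.
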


We have an alternative description of the NC integral in terms of Dixmier traces~\cite{Co:NCG, Di:CRAS66}. Let $\omega$ be a state on the quotient $C^*$-algebra $\ell_\infty/\co$, i.e., a positive linear form such that $\omega(1)=1$. This lifts to a positive linear functional $\lim_\omega$ on $\ell_\infty$. Such a functional is called an \emph{extended limit}. In particular, if $a=(a_N)_{N\geq 1}$ is any bounded sequence, then 
\begin{equation*}
 \lim_{j\rightarrow \infty} a_j = L \ \Longleftrightarrow \ \big( \limw a_j=L \quad \forall \omega\big). 
\end{equation*}

\begin{proposition}[\cite{Di:CRAS66, Co:NCG, LSZ:Book, Po:NCIntegration}] 
 The following formula
\begin{equation*}
 \Trw(A)= \limw \frac{1}{\log N} \sum_{j<N} \lambda_j(A), \qquad A\in \sL_{1,\infty}, 
\end{equation*}
uniquely defines a positive linear trace $\Trw: \sL_{1,\infty}\rightarrow \C$. 
\end{proposition}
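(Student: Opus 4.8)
The plan is to check, one step at a time, that the displayed formula is well posed and that the functional $\Trw$ it produces is positive, linear, and annihilates commutators; granting Lemma~\ref{lem:NCInt.additivity} and the properties of $\limw$ recalled above, each of these reduces to a short formal manipulation.

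First I would settle well-definedness. For $A\in\sL_{1,\infty}$ the Weyl inequalities~(\ref{eq:NC-Integral.Weyl-Ineq}) give $\big|\sum_{j<N}\lambda_j(A)\big|\leq\sum_{j<N}\mu_j(A)=\op{O}(\log N)$, so the Ces\`aro mean sequence $\big(\frac1{\log N}\sum_{j<N}\lambda_j(A)\big)_{N\geq 1}$ lies in $\ell_\infty$. By Lemma~\ref{lem:NCInt.additivity} (or the remark following it) its class in $\ell_\infty/\co$ does not depend on the eigenvalue sequence chosen for $A$. Since $\limw$ is a positive linear functional on $\ell_\infty$ that factors through $\ell_\infty/\co$, applying it yields a scalar $\Trw(A)$ that is well defined and independent of all choices; this is the content of the word \emph{uniquely} in the statement.

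Next I would verify linearity, positivity, and the trace property. Homogeneity is immediate because $(\mu\lambda_j(A))_{j\geq 0}$ is an eigenvalue sequence for $\mu A$ for any $\mu\in\C$, so $\Trw(\mu A)=\mu\Trw(A)$ by linearity of $\limw$. For additivity, Lemma~\ref{lem:NCInt.additivity} gives, for $A,B\in\sL_{1,\infty}$,
\begin{equation*}
 \sum_{j<N}\lambda_j(A+B)=\sum_{j<N}\lambda_j(A)+\sum_{j<N}\lambda_j(B)+\op{O}(1);
\end{equation*}
dividing by $\log N$ produces an $\op{O}(1/\log N)$ remainder, which is a sequence in $\co$, so applying $\limw$ — linear and vanishing on $\co$ — gives $\Trw(A+B)=\Trw(A)+\Trw(B)$. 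For positivity: if $A\geq 0$ then $\lambda_j(A)=\mu_j(A)\geq 0$, so every Ces\`aro mean is nonnegative and positivity of $\limw$ forces $\Trw(A)\geq 0$. Finally, for the trace property, let $A\in\sL_{1,\infty}$ and $B\in\sL(\sH)$; since $\sL_{1,\infty}$ is a two-sided ideal, $AB$ and $BA$ both lie in $\sL_{1,\infty}$, and $AB-BA\in\Com(\sL_{1,\infty})$ by definition. The corollary stating that $\sum_{j<N}\lambda_j(C)=\op{O}(1)$ for $C\in\Com(\sL_{1,\infty})$ then shows $\frac1{\log N}\sum_{j<N}\lambda_j(AB-BA)\in\co$, whence $\Trw(AB)-\Trw(BA)=\Trw(AB-BA)=0$ by the linearity just established.

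I do not expect a genuine obstacle: the entire weight of the argument sits in Lemma~\ref{lem:NCInt.additivity} — the near-additivity of truncated eigenvalue sums — and in the $\op{O}(1)$ bound for truncated eigenvalue sums of elements of $\Com(\sL_{1,\infty})$, both of which are already in place. The only point that requires care is to keep the (unique) singular value sequence and the (non-unique) eigenvalue sequences of an operator cleanly separated throughout: positivity uses that they coincide when $A\geq 0$, well-definedness uses independence of the eigenvalue sequence, and the trace identity is genuinely a statement about eigenvalues rather than singular values.
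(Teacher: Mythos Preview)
Your proof is correct. The paper does not actually supply its own proof of this proposition; it is stated with citations to \cite{Di:CRAS66, Co:NCG, LSZ:Book, Po:NCIntegration} and taken as known. Your argument is the standard one and is built entirely from the ingredients the paper has already laid out: Weyl's inequalities~(\ref{eq:NC-Integral.Weyl-Ineq}) for boundedness of the Ces\`aro means, Lemma~\ref{lem:NCInt.additivity} for additivity, and its corollary on $\Com(\sL_{1,\infty})$ for the trace property.
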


The trace $\Trw: \sL_{1,\infty}\rightarrow \C$ is called the \emph{Dixmier trace} associated with the extended limit $\limw$. 

\begin{proposition}[\cite{LSZ:Book, Po:NCIntegration}] \label{prop:NCInt.sM-sT} 
If $A\in \sL_{1,\infty}$, then 
\begin{equation*}
  \lim_{N\rightarrow \infty}  \frac{1}{ \log N}\sum_{j<N}\lambda_j(A)=L  \ \Longleftrightarrow \ \big( \Trw(A)=L  \quad \forall \omega\big).
\end{equation*}
In particular, $A$ is measurable if and only if the value of $\Trw(A)$ is independent of $\omega$. Moreover, in this case this value is equal to the NC integral $\bint A$. 
\end{proposition}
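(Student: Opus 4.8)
The plan is to reduce the statement to the definition of the Dixmier trace together with the elementary equivalence for bounded sequences recorded just above the statement. First I would fix $A\in\sL_{1,\infty}$ and put
\begin{equation*}
 a_N:=\frac{1}{\log N}\sum_{j<N}\lambda_j(A),\qquad N\geq 1,
\end{equation*}
observing that $a=(a_N)_{N\geq 1}$ is a bounded sequence by the Weyl inequalities~(\ref{eq:NC-Integral.Weyl-Ineq}) and that its class in $\ell_\infty/\co$ is independent of the chosen eigenvalue sequence, by Lemma~\ref{lem:NCInt.additivity}. The observation that does all the work is that, by the very definition of the Dixmier trace, $\Trw(A)=\limw a_N$ for every state $\omega$ on $\ell_\infty/\co$.

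Granting this, I would next apply the equivalence $\lim_{N\to\infty}a_N=L\Longleftrightarrow(\limw a_N=L\ \forall\omega)$ to the sequence $(a_N)$; substituting $\limw a_N=\Trw(A)$ turns it into
\begin{equation*}
 \lim_{N\to\infty}a_N=L\ \Longleftrightarrow\ \big(\Trw(A)=L\quad\forall\omega\big),
\end{equation*}
which is the first assertion. For the ``in particular'' clause I would argue as follows: if $\Trw(A)$ does not depend on $\omega$, denote by $L$ its common value; then the right-hand side above holds, so $\lim_N a_N$ exists and equals $L$, i.e.\ $A$ is measurable and $\bint A=L$ by Definition~\ref{def:NCG.NC-Integral}. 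Conversely, if $A$ is measurable then $\lim_N a_N=\bint A$ exists, whence $\Trw(A)=\bint A$ for every $\omega$; in particular the Dixmier trace is $\omega$-independent with common value $\bint A$.

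The one part that is not purely formal --- and the step I expect to be the main obstacle --- is the hard direction of the sequence equivalence, namely that if all extended limits of $(a_N)$ agree then $(a_N)$ actually converges. I would deduce this from the identification $\ell_\infty/\co\cong C(\beta\N\setminus\N)$: the pure states of this commutative $C^*$-algebra are the characters, and the corresponding values $\limw a_N$ are precisely the cluster values of $(a_N)$, so their coincidence forces the cluster set to be a single point. As the present section is a review, this fact can simply be cited from~\cite{LSZ:Book, Po:NCIntegration}.
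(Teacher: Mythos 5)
Your proof is correct, and it uses exactly the two facts the paper already puts on the table before stating the proposition: the equivalence for bounded sequences (convergence is equivalent to coincidence of all extended limits) and the paper's definition of the Dixmier trace as $\Trw(A)=\limw\frac{1}{\log N}\sum_{j<N}\lambda_j(A)$. Once those are granted, the proposition is formal, and your handling of the ``in particular'' clause and the identification of the common value with $\bint A$ is right. Since the paper cites~\cite{LSZ:Book, Po:NCIntegration} rather than giving a proof, there is no internal argument to compare against, but your reduction is the expected one. Your justification of the nontrivial direction of the sequence equivalence via characters of $\ell_\infty/\co \cong C(\beta\N\setminus\N)$ and cluster values is correct; one could equally argue by contraposition, noting that $\liminf a_N$ and $\limsup a_N$ are both attained as extended limits (by Hahn--Banach), so if they differ not all extended limits can coincide.
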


The above proposition shows that we recover the original construction of the NC integral by Connes~\cite{Co:NCG}.

\subsection{Measurability and Weyl's laws} 
In what follows we denote by $T_0$ any positive operator in $\sL_{1,\infty}$ such that $\mu_j(T_0)=(j+1)^{-1}$ for all $j\geq 0$. Note that any two such operators are unitary equivalent, and hence agree up to an element of $\Com(\sL_{1,\infty})$. 

A trace $\varphi:\sL_{1,\infty}\rightarrow \C$ is called \emph{normalized} if $\varphi(T_0)=1$. All the Dixmier traces are normalized traces. However, there are plenty of positive normalized traces on $\sL_{1,\infty}$ that are not Dixmier traces (see, e.g.,~\cite[Theorem~4.7]{SSUZ:AIM15}). Therefore, it stands for reason to consider a stronger notion of measurability. 

\begin{definition}
An operator $A\in \sL_{1,\infty}$ is called \emph{strongly measurable} when there is $L\in \C$ such that $\varphi(A)=L$ for every positive normalized trace $\varphi$ on $\sL_{1,\infty}$. 
 \end{definition}

\begin{remark}\label{rmk:NC.Rozenblum-measurability}
 In~\cite{Ro:arXiv21} measurable operators are defined as operators in the Dixmier-Macaev class $\fM_{1,\infty}$ which take the same value on all positive normalized traces on $\fM_{1,\infty}$. The weak trace class $\sL_{1,\infty}$ is strictly contained in $\fM_{1,\infty}$. 
 Every Dixmier trace extends to a positive normalized trace on $\fM_{1,\infty}$. However, there are many traces on $\sL_{1,\infty}$ that do not extend to  $\fM_{1,\infty}$. Therefore, measurability in the sense of~\cite{Ro:arXiv21} is stronger than measurability in the sense of Definition~\ref{def:NCG.NC-Integral}, 
 but it is also weaker than strong measurability in the sense above. 
\end{remark}

In the scalar case Rozenblum established the trace formula~(\ref{eq:Intro.trace-formula-QuP}) for a larger class of potentials. As mentioned above, a weaker notion of measurability is used in~\cite{Ro:arXiv21}. However, the same arguments as above show that the operators under consideration in~\cite{Ro:arXiv21} are strongly measurable in the sense used in this paper.

We denote by $\sMs$ the class of strongly measurable operators.  It follows from Proposition~\ref{prop:NCInt.sM-sT} that if $A\in \sMs$, then $A$ is measurable, and $\bint A=\varphi(A)$ for every positive normalized trace $\varphi$ on $\sL_{1,\infty}$. 
Note also that the inclusion of $\sMs$ into $\sM$ is strict (see~\cite[Theorem~7.4]{SSUZ:AIM15}).

\begin{proposition}[see, e.g., \cite{Po:NCIntegration}]\label{prop:NCInt.strong-measurable} 
 The following holds. 
\begin{enumerate}
 \item $\sMs$ is a closed subspace of $\sL_{1,\infty}$ containing $\Com(\sL_{1,\infty})$ and $(\sL_{1,\infty})_0$. In particular, it contains all infinitesimal operators of order~$>1$. 
  
\item If $A\in \sL_{1,\infty}$ is such that $\sum_{j<N} \lambda_j( A)=L+\op{O}(1)$, then $A\in \sMs$ and $\bint A=L$.  
  
\item If $A$ and $B$ are operators in $\sL_{1,\infty}$ with same non-zero eigenvalues and same multiplicities. Then one operator is strongly measurable if and only if the other is. 
\end{enumerate}
\end{proposition}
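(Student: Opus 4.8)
Since this is recorded as a known result (see, e.g.,~\cite{Po:NCIntegration}), the plan is only to indicate the structure; everything reduces to three facts already available in Section~\ref{sec:NCG}: every trace on $\sL_{1,\infty}$ annihilates $\Com(\sL_{1,\infty})$ by definition, $\sL_1\subset\Com(\sL_{1,\infty})$, and the characterisation~(\ref{eq:NCInt.A-B-Com}) of the relation $A-B\in\Com(\sL_{1,\infty})$ through partial sums of eigenvalues. For assertion~(1), linearity of traces gives at once that $\sMs$ is a linear subspace: if $\varphi(A)=L_A$ and $\varphi(B)=L_B$ for every positive normalized trace $\varphi$, then $\varphi(\alpha A+\beta B)=\alpha L_A+\beta L_B$ independently of $\varphi$. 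Since every trace vanishes on $\Com(\sL_{1,\infty})$, that subspace lies in $\sMs$ with value $0$, and so do the finite-rank operators, which sit inside $\sL_1\subset\Com(\sL_{1,\infty})$. Granting that $\sMs$ is closed in $\sL_{1,\infty}$ (next paragraph), we then get $(\sL_{1,\infty})_0\subset\sMs$, since $(\sL_{1,\infty})_0$ is by definition the $\|\cdot\|_{1,\infty}$-closure of the finite-rank operators; closedness is genuinely needed here, as $(\sL_{1,\infty})_0$ is not contained in $\Com(\sL_{1,\infty})$. Finally an infinitesimal of order $\alpha>1$ has summable singular values, hence lies in $\sL_1\subset\sMs$. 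In each of these cases the common value is $0$, consistently with $\bint$ vanishing there.

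The substantial point — and the main obstacle — is the closedness of $\sMs$, which I would deduce from the equicontinuity of positive normalized traces. For $0\leq A\in\sL_{1,\infty}$ with $\|A\|_{1,\infty}\leq 1$, replace $A$ by the unitarily equivalent diagonal operator $\diag(\mu_j(A))_{j\geq 0}$; then $\mu_j(A)\leq(j+1)^{-1}$ for all $j$, and since $\diag\big((j+1)^{-1}\big)_{j\geq 0}$ is an admissible choice of $T_0$, positivity (monotonicity) of $\varphi$ yields $\varphi(A)=\varphi\big(\diag(\mu_j(A))\big)\leq\varphi\big(\diag((j+1)^{-1})\big)=1$. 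Decomposing a general element of $\sL_{1,\infty}$ into four positive pieces gives $|\varphi(A)|\leq C\|A\|_{1,\infty}$ with $C$ independent of $\varphi$. Consequently, if $A_k\to A$ in $\sL_{1,\infty}$ with $A_k\in\sMs$ of value $L_k$, the numbers $L_k=\varphi(A_k)$ form a Cauchy sequence converging to some $L$, and $\varphi(A)=\lim_k\varphi(A_k)=L$ for every positive normalized trace $\varphi$; hence $A\in\sMs$. (Alternatively one may simply invoke that all traces on $\sL_{1,\infty}$ are continuous; see, e.g.,~\cite{LSZ:Book}.)

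For assertion~(2), with $T_0$ as above one has $\sum_{j<N}\lambda_j(T_0)=\sum_{j<N}(j+1)^{-1}=\log N+\op{O}(1)$, $\bint T_0=1$, and $T_0\in\sMs$ with value $1$ by the definition of ``normalized''; so the hypothesis on $\sum_{j<N}\lambda_j(A)$ can be written (using the displayed asymptotics of $\sum_{j<N}\lambda_j(T_0)$) as $\sum_{j<N}\lambda_j(A)=\sum_{j<N}\lambda_j(LT_0)+\op{O}(1)$, and~(\ref{eq:NCInt.A-B-Com}) then gives $A-LT_0\in\Com(\sL_{1,\infty})$. As $\sMs$ is a subspace containing $LT_0$ (value $L$) and $\Com(\sL_{1,\infty})$ (value $0$), it follows that $A\in\sMs$ with value $L$ and $\bint A=\bint(LT_0)+\bint(A-LT_0)=L$. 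For assertion~(3), I would first reduce to $A$ and $B$ acting on the same Hilbert space $\sH$: push both forward along isometric embeddings into a common $\sH$ as in Proposition~\ref{prop:App.eigenvalues}, which preserves the non-zero eigenvalues and their algebraic multiplicities, and check by a routine block-matrix computation that a positive normalized trace on $\sL_{1,\infty}(\sH)$ restricts to one on the corner cut out by the projection onto the range of the embedding, while every such trace on that corner extends, so that strong measurability passes back and forth. Once $A$ and $B$ are on the same $\sH$ with the same non-zero eigenvalues and algebraic multiplicities, one may choose eigenvalue sequences with $\lambda_j(A)=\lambda_j(B)$ for all $j$, whence $\sum_{j<N}\lambda_j(A)-\sum_{j<N}\lambda_j(B)=\op{O}(1)$ and~(\ref{eq:NCInt.A-B-Com}) gives $A-B\in\Com(\sL_{1,\infty})\subset\sMs$; since $\sMs$ is a subspace, $A\in\sMs$ if and only if $B\in\sMs$, with the same value. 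The only step requiring real work is thus the closedness in~(1).
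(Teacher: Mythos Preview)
The paper does not supply its own proof of this proposition; it is recorded as a known fact with a reference to~\cite{Po:NCIntegration}, so there is no argument in the text to compare yours against. Your sketch is correct: the equicontinuity bound $|\varphi(A)|\leq C\|A\|_{1,\infty}$ uniform in $\varphi$ is the right mechanism for closedness in~(1), and the reductions of~(2) and~(3) to the characterisation~(\ref{eq:NCInt.A-B-Com}) via $A-LT_0\in\Com(\sL_{1,\infty})$ and $A-B\in\Com(\sL_{1,\infty})$ are exactly the intended route.

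Two minor remarks. First, in~(2) the printed hypothesis $\sum_{j<N}\lambda_j(A)=L+\op{O}(1)$ is visibly a misprint for $\sum_{j<N}\lambda_j(A)=L\log N+\op{O}(1)$ (otherwise $\bint A=0$, contradicting the stated conclusion $\bint A=L$); you have proved the intended statement, but your phrase ``the hypothesis \dots\ can be written as'' glosses over this, and it would be cleaner to say so explicitly. Second, in~(3) the statement as written places $A$ and $B$ in the same $\sL_{1,\infty}$, so your reduction across different Hilbert spaces is not needed; the extension step for traces you call ``routine'' is the only place where something nontrivial is hidden, and you can simply drop it for the result as stated.
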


We have the following relationship between strong measurability and Weyl's laws.

\begin{proposition}[see~\cite{Po:NCIntegration}]\label{prop:Bir-Sol.Weyl-mesurable} 
 The following holds. 
\begin{enumerate}
 \item  If $A\in \sW_{1,\infty}$, then $A$ is strongly measurable, and we have
\begin{equation}
 \bint A = \Lambda^+(A)- \Lambda^-(A). 
 \label{eq:Bir-Sol.bint-Lambdapm}
\end{equation}
In particular, if $A$ is selfadjoint, then 
\begin{equation}
 \bint A = \lim_{j\rightarrow \infty}j\lambda_j^+(A) -  \lim_{j\rightarrow \infty}j\lambda_j^-(A).
 \label{eq:Bir-Sol.bint-Lambdapm-sa}
\end{equation}

\item If $A\in  \sW_{|1,\infty|}$, then $|A|$ is strongly measurable, and we have
\begin{equation*}
  \bint |A| = \lim_{j\rightarrow \infty}j\mu_j(A)
\end{equation*}
\end{enumerate}
\end{proposition}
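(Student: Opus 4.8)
The plan is to deduce Proposition~\ref{prop:Bir-Sol.Weyl-mesurable} from the spectral machinery already assembled, reducing everything to Lemma~\ref{lem:NCInt.additivity}, the Weyl inequalities~(\ref{eq:NC-Integral.Weyl-Ineq}), and Proposition~\ref{prop:NCInt.strong-measurable}(2). The key observation is that a Weyl operator is, up to a negligible perturbation, a direct sum of two positive operators whose partial-sum asymptotics are governed by a single limiting constant, and for such positive operators the Ces\`aro mean and the raw partial sum $\sum_{j<N}\lambda_j$ differ only by $\op{o}(\log N)$, in fact by a controlled amount that lets us invoke Proposition~\ref{prop:NCInt.strong-measurable}(2) after dividing by $\log N$ is not even needed --- I will instead verify the $\op{O}(1)$ hypothesis of part (2) directly on a suitably normalized model.

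Here is the order of steps. First I would treat part (1) for $A\geq 0$: if $\lim_j j\lambda_j(A)=\Lambda$ exists, then $\lambda_j(A)=\Lambda j^{-1}+\op{o}(j^{-1})$, so $\sum_{j<N}\lambda_j(A)=\Lambda\log N + \op{o}(\log N)$; this already gives measurability and $\bint A=\Lambda$ in the sense of Definition~\ref{def:NCG.NC-Integral}, but for \emph{strong} measurability I need to compare $A$ with a scalar multiple of $T_0$. Since $\mu_j(A)-\Lambda\mu_j(T_0)=\op{o}(j^{-1})$, the difference $A-\Lambda T_0$ lies in $(\sL_{1,\infty})_0$ up to a commutator (both $A$ and $T_0$ are positive with the prescribed asymptotics, hence unitarily equivalent to diagonal operators with those eigenvalues, and the difference of the diagonals is in $(\sL_{1,\infty})_0$). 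By Proposition~\ref{prop:NCInt.strong-measurable}(1), $(\sL_{1,\infty})_0\subset\sMs$ and $\sMs$ is closed under the relevant operations, and $\varphi(T_0)=1$ for every normalized trace, so $\varphi(A)=\Lambda$ for all such $\varphi$. Next, for selfadjoint $A\in\sW_{1,\infty}$, write $A=A^+-A^-$ with $A^\pm\geq0$ the positive/negative parts; these are disjointly supported, so $\bint$ (and every trace $\varphi$, by additivity on $\sL_{1,\infty}$) splits as $\varphi(A)=\varphi(A^+)-\varphi(A^-)=\Lambda^+(A)-\Lambda^-(A)$, using the $A\geq0$ case twice together with $\lambda_j^\pm(A)=\mu_j(A^\pm)$. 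For general $A\in\sW_{1,\infty}$, decompose into real and imaginary parts using that $\Re A,\Im A$ are selfadjoint Weyl operators by definition of $\sW_{1,\infty}$, and use linearity of traces; formula~(\ref{eq:Bir-Sol.bint-Lambdapm-sa}) is then just the selfadjoint specialization. Part (2) is immediate: $|A|\geq0$ and $\lim_j j\mu_j(A)=\lim_j j\lambda_j(|A|)$ exists by hypothesis, so the $A\geq0$ case applies to $|A|$.

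The main obstacle I expect is the passage from ``measurable with a given $\bint$'' to ``\emph{strongly} measurable'', i.e.\ controlling the value on \emph{all} positive normalized traces and not merely the Dixmier ones. The clean route is the one indicated: show $A-\Lambda(A)T_0\in(\sL_{1,\infty})_0 + \Com(\sL_{1,\infty})$ for $A\geq0$, which is where the hypothesis $\lim j\lambda_j(A)$ \emph{exists} (rather than merely $\limsup<\infty$) is essential --- it forces the ``residual'' operator to have singular values that are genuinely $\op{o}(j^{-1})$, landing in $(\sL_{1,\infty})_0$, on which every trace vanishes by Proposition~\ref{prop:NCInt.strong-measurable}(1). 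One subtlety worth a sentence in the write-up: to realize $A$ and $\Lambda T_0$ on a common footing one uses that a positive compact operator is unitarily equivalent to the diagonal operator of its eigenvalue sequence, and unitary equivalence changes an operator only by an element of $\Com(\sL_{1,\infty})$, so it is harmless for all traces. Everything else --- the additivity splitting across $A^\pm$ and across $\Re A,\Im A$ --- is formal once one recalls that traces on $\sL_{1,\infty}$ are linear and that $A^+A^-=0$.
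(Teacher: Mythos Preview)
Your argument is correct and complete. The paper does not actually prove this proposition---it simply refers the reader to~\cite{Po:NCIntegration}---so there is no in-paper proof to compare against; your route (reduce to $A\geq 0$ by splitting first into real/imaginary and then positive/negative parts, and for $A\geq 0$ compare with $\Lambda T_0$ modulo $(\sL_{1,\infty})_0+\Com(\sL_{1,\infty})$, then invoke Proposition~\ref{prop:NCInt.strong-measurable}(1)) is the natural one and is exactly in the spirit of the toolkit assembled in Section~\ref{sec:NCG}.

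Two small points worth tightening in a final write-up. First, your aside about Proposition~\ref{prop:NCInt.strong-measurable}(2) is a red herring: you only obtain $\sum_{j<N}\lambda_j(A)=\Lambda\log N+\op{o}(\log N)$, not $L+\op{O}(1)$, so drop that sentence and go straight to the $A-\Lambda T_0$ comparison. Second, the claim that the diagonal operator with entries $\lambda_j(A)-\Lambda(j+1)^{-1}$ lies in $(\sL_{1,\infty})_0$ requires the (easy but not entirely trivial) observation that if a sequence $(a_j)$ satisfies $a_j=\op{o}(j^{-1})$, then its decreasing rearrangement also satisfies this; a one-line counting argument handles it. With those two edits your proof is clean.
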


\subsection{Connes' trace theorem}\label{subsec.PsiDOs} 
Suppose that $(M^n,g)$ is a closed Riemannian manifold and $E$ is a Hermitian vector bundle over $M$. Let  $\Res :\Psi^\Z(M,E)\rightarrow \C$ be the noncommutative residue trace of Guillemin~\cite{Gu:AIM85} and Wodzicki~\cite{Wo:LNM87},  where $\Psi^\Z(M,E)=\bigcup_{m\in \Z}\Psi^m(M,E)$ is the algebra of integer order \psidos\ acting on sections of $E$. This functional appears as the residual trace on integer-order \psidos\ induced by the analytic extension of the ordinary trace to all non-integer order \psidos~(see~\cite{Gu:AIM85, Wo:LNM87}). If $P\in \Psi^m(M,E)$, $m\in \Z$, then 
\begin{equation*}
 \Res(P) = \int_M \tr_E[c_P(x)], 
\end{equation*}
where $c_P(x)$ is an $\End(E)$-valued 1-density which is given in local coordinates by
\begin{equation*}
 c_P(x)=(2\pi)^{-n}\int_{|\xi|=1} a_{-n}(x,\xi) d^{n-1}\xi,
\end{equation*}
 where $a_{-n}(x,\xi)$ is the symbol of degree $-n$ of $P$. In particular, if $P$ has order~$-n$, then 
 \begin{equation*}
 \Res(P)= (2\pi)^{-n} \int_{S^*M} \tr_E\big[\sigma(P)(x,\xi)\big] dxd\xi. 
\end{equation*}
A well known result of Wodzicki~\cite{Wo:HDR} further asserts that this is the unique trace on  $\Psi^\Z(M,E)$ up to constant multiple if $M$ is connected and has dimension $n\geq 2$ (see also~\cite{Le:AGAG99, Po:JAM10}). 

\begin{proposition}[Connes's Trace Theorem~\cite{Co:CMP88, KLPS:AIM13}] \label{prop:NCInt.Connes-trace-thm} 
Every operator $P\in \Psi^{-n}(M,E)$ is strongly measurable, and we have
\begin{equation}
 \bint P= \frac{1}{n} \Res(P). 
 \label{eq:Connes-trace-thm} 
\end{equation}
 \end{proposition}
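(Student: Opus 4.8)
The plan is to read off the statement from two results already in hand: Birman--Solomyak's Weyl's laws for negative order \psidos\ (Proposition~\ref{prop:Bir-Sol-asymp}) and the implication ``Weyl's law $\Rightarrow$ strong measurability'' (Proposition~\ref{prop:Bir-Sol.Weyl-mesurable}). The local formula for the noncommutative residue recorded just above then identifies the constant. No analytic work beyond what is cited is required.

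First I would record the reductions. If $P\in\Psi^{-n}(M,E)$ then $P^*\in\Psi^{-n}(M,E)$ as well, so $\Re P=\tfrac12(P+P^*)$ and $\Im P=\tfrac1{2i}(P-P^*)$ are selfadjoint operators in $\Psi^{-n}(M,E)$, with $\sigma(\Re P)=\tfrac12\big(\sigma(P)+\sigma(P)^*\big)$ and $\sigma(\Im P)=\tfrac1{2i}\big(\sigma(P)-\sigma(P)^*\big)$, and hence $\sigma(\Re P)+i\,\sigma(\Im P)=\sigma(P)$ pointwise on $S^*M$. By Proposition~\ref{prop:Bir-Sol-asymp}(1) (for an operator of order $-n$ we have $p=1$) the operator $P$ lies in $\sL_{1,\infty}$ and is a Weyl operator, i.e.\ $P\in\sW_{1,\infty}$; Proposition~\ref{prop:Bir-Sol.Weyl-mesurable}(1) then already gives that $P$ is strongly measurable with $\bint P=\Lambda^+(P)-\Lambda^-(P)$, and by definition $\Lambda^\pm(P)=\Lambda^\pm(\Re P)+i\,\Lambda^\pm(\Im P)$.

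It remains to evaluate the right-hand side. Applying Proposition~\ref{prop:Bir-Sol-asymp}(3) to the selfadjoint operators $\Re P$ and $\Im P$ (again with $p=1$) gives $\Lambda^\pm(\Re P)=\tfrac1n(2\pi)^{-n}\int_{S^*M}\tr_E[\sigma(\Re P)(x,\xi)_\pm]\,dxd\xi$, and likewise for $\Im P$. Since $A_+-A_-=A$ for every selfadjoint $A\in\End(E_x)$ and $\tr_E$ is linear, subtracting the $\pm$ limits yields $\Lambda^+(\Re P)-\Lambda^-(\Re P)=\tfrac1n(2\pi)^{-n}\int_{S^*M}\tr_E[\sigma(\Re P)(x,\xi)]\,dxd\xi$, and the analogous identity for $\Im P$. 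Combining these with $\sigma(\Re P)+i\,\sigma(\Im P)=\sigma(P)$ and the linearity of $\tr_E$ and of the integral, one gets
\begin{equation*}
 \bint P=\frac1n(2\pi)^{-n}\!\int_{S^*M}\tr_E\big[\sigma(P)(x,\xi)\big]\,dxd\xi=\frac1n\Res(P),
\end{equation*}
the last equality being the local formula for $\Res$ on operators of order $-n$ recorded above.

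I do not expect a genuine obstacle here: once Proposition~\ref{prop:Bir-Sol-asymp} and Proposition~\ref{prop:Bir-Sol.Weyl-mesurable} are available, the whole argument is linear-algebraic bookkeeping with principal symbols. The real analytic content sits entirely in Birman--Solomyak's asymptotics, which are taken as a black box; note that the classical proofs of Connes' trace theorem (\cite{Co:CMP88, KLPS:AIM13}) proceed by rather different means, but the spectral route above is the natural one in the present framework and is precisely the mechanism used again, in Section~\ref{sec:NCInt-Orlicz}, to obtain the trace formulas~(\ref{eq:Intro.trace-formula-QuP})--(\ref{eq:Intro.trace-formula-|QuP|}) for matrix-valued $\LlogL$-Orlicz potentials.
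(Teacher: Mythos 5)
Your proof is correct and follows exactly the route the paper itself indicates immediately after the statement: apply Birman--Solomyak's Weyl's law (Proposition~\ref{prop:Bir-Sol-asymp}) to see $P\in\sW_{1,\infty}$, then invoke Proposition~\ref{prop:Bir-Sol.Weyl-mesurable} to get strong measurability and $\bint P=\Lambda^+(P)-\Lambda^-(P)$, and finally compute $\Lambda^\pm$ from the symbol asymptotics. The only difference is that the paper writes out the computation only for $P=P^*$ and leaves the reduction to the selfadjoint case via $P=\Re P+i\,\Im P$ implicit, whereas you spell it out; the content is the same.
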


Let $\Delta_E=\nabla^*\nabla$ be the Laplacian of some Hermitian connection $\nabla$ on $E$. As an application of Connes' trace theorem we obtain the following integration formula, which shows that the noncommutative integral recaptures the Riemannian volume density.

\begin{corollary}[Connes's Integration Formula~\cite{Co:CMP88, KLPS:AIM13}]\label{prop:NCInt.Connes-int-formula} 
For all $u\in C^\infty(M, \End(E))$, the operator $ \Delta_E^{-{n}/{4}} u \Delta_E^{-{n}/{4}}$ is strongly measurable, and we have
\begin{equation*}
 \bint \Delta_E^{-\frac{n}{4}} u \Delta_E^{-\frac{n}{4}} = c_n \int_M \tr_E\big[ u(x)\big] \sqrt{g(x)}d^nx, \qquad c_n:=\frac{1}{n} (2\pi)^{-n}|\bS^{n-1}|. 
\end{equation*}
\end{corollary}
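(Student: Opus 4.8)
The plan is to deduce Connes' integration formula directly from Connes' trace theorem (Proposition~\ref{prop:NCInt.Connes-trace-thm}) by exhibiting $\Delta_E^{-n/4}u\Delta_E^{-n/4}$ as an element of $\Psi^{-n}(M,E)$ and computing its noncommutative residue. First I would observe that for $u\in C^\infty(M,\End(E))$ the composition $\Delta_E^{-n/4}u\Delta_E^{-n/4}$ is a classical \psido\ of order $-n/2-n/2=-n$, since $\Delta_E^{-n/4}\in\Psi^{-n/2}(M,E)$ is classical (being a complex power of a positive elliptic operator, via Seeley's theorem) and multiplication by a smooth section of $\End(E)$ is a \psido\ of order $0$. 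Hence Proposition~\ref{prop:NCInt.Connes-trace-thm} applies: the operator is strongly measurable and $\bint \Delta_E^{-n/4}u\Delta_E^{-n/4}=\frac1n\Res\big(\Delta_E^{-n/4}u\Delta_E^{-n/4}\big)$.

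Next I would compute the residue. The principal symbol of $\Delta_E^{-n/4}$ is $|\xi|_g^{-n/2}\,\id_{E_x}$ (the principal symbol of a complex power $A^s$ of a positive elliptic operator $A$ is $\sigma(A)^s$), so the principal symbol of $\Delta_E^{-n/4}u\Delta_E^{-n/4}$, which has order $-n$, is $|\xi|_g^{-n}u(x)$. Therefore the degree $-n$ symbol coincides with the principal symbol and
\begin{equation*}
 \Res\big(\Delta_E^{-\frac{n}{4}}u\Delta_E^{-\frac{n}{4}}\big)= (2\pi)^{-n}\int_{S^*M}\tr_E\big[|\xi|_g^{-n}u(x)\big]\,dxd\xi = (2\pi)^{-n}\int_{S^*M}\tr_E\big[u(x)\big]\,dxd\xi,
\end{equation*}
since on the cosphere bundle $|\xi|_g=1$. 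Finally I would carry out the fibrewise integration: writing the integral over $S^*M$ as an iterated integral over $M$ and over the unit cosphere $S^*_xM\cong \bS^{n-1}$ (of Riemannian volume $|\bS^{n-1}|$), and accounting for the Riemannian density $\sqrt{g(x)}\,d^nx$ on the base, one gets $\int_{S^*M}\tr_E[u(x)]\,dxd\xi = |\bS^{n-1}|\int_M\tr_E[u(x)]\sqrt{g(x)}\,d^nx$. Combining the three displays yields
\begin{equation*}
 \bint \Delta_E^{-\frac{n}{4}}u\Delta_E^{-\frac{n}{4}} = \frac1n(2\pi)^{-n}|\bS^{n-1}|\int_M\tr_E\big[u(x)\big]\sqrt{g(x)}\,d^nx = c_n\int_M\tr_E\big[u(x)\big]\sqrt{g(x)}\,d^nx,
\end{equation*}
as claimed.

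The only genuinely delicate point is the first step: verifying that $\Delta_E^{-n/4}u\Delta_E^{-n/4}$ is a \emph{classical} (one-step polyhomogeneous) \psido\ of order exactly $-n$, rather than merely an operator in $\sL_{1,\infty}$. This requires knowing that complex powers of Laplace-type operators are classical \psidos, which is Seeley's theorem, and that the algebra of classical \psidos\ is closed under composition and contains multiplication operators by smooth endomorphism-valued functions; both are standard but should be cited. Everything else — the symbol computations and the fibre integration — is routine. (Strictly, one should also note that $\Delta_E$ may fail to be invertible, in which case $\Delta_E^{-n/4}$ is defined as the complex power of $\Delta_E$ on $(\ker\Delta_E)^\perp$ extended by $0$; this alters the operator only by a smoothing operator, which lies in $\Com(\sL_{1,\infty})$ and hence does not affect either strong measurability or the value of $\bint$.)
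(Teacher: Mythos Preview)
Your proof is correct and follows exactly the approach the paper intends: the corollary is stated immediately after Connes' trace theorem (Proposition~\ref{prop:NCInt.Connes-trace-thm}) as a direct application, and you have carried out precisely that application by identifying $\Delta_E^{-n/4}u\Delta_E^{-n/4}\in\Psi^{-n}(M,E)$ and computing its noncommutative residue via the principal symbol. Your remarks on Seeley's theorem and the possible kernel of $\Delta_E$ are the right technical caveats, and the paper takes them as understood.
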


We observe that Birman-Solomyak's Weyl's laws~(\ref{eq:Weyl.Bir-Sol-mu})--(\ref{eq:Weyl.Bir-Sol-selfadjoint}) and Proposition~\ref{prop:Bir-Sol.Weyl-mesurable} enable us to recover Connes' trace theorem. In particular, 
if $P=P^*\in \Psi^{-n}(M,E)$, then by using~(\ref{eq:Weyl.Bir-Sol-selfadjoint}) and~(\ref{eq:Bir-Sol.bint-Lambdapm-sa}) we get
\begin{align*}
 \bint P& =  \lim_{j\rightarrow \infty}j\lambda_j^+(P) -  \lim_{j\rightarrow \infty}j\lambda_j^-(P)\\
 & = \frac1{n} (2\pi)^{-n} \! \int_{S^*M} \left\{\tr_E\big[ \sigma(P)(x,\xi)_+ \big]  -\tr_E\big[ \sigma(P)(x,\xi)_{-} \big]\right\} dx d\xi,\\
 &= \frac1{n} (2\pi)^{-n} \! \int_{S^*M} \tr_E\big[ \sigma(P)(x,\xi) \big]  dx d\xi,
\end{align*}
which gives back Connes' trace theorem~(\ref{eq:Connes-trace-thm}). 

In fact, Birman-Solomyak's Weyl's law~(\ref{eq:Weyl.Bir-Sol-mu}) further provide us with a trace formula for absolute values of \psidos\ even though they need not be \psidos. Namely, by combining~(\ref{eq:Weyl.Bir-Sol-mu}) and Proposition~\ref{prop:Bir-Sol.Weyl-mesurable} we get the following result. 

\begin{proposition}
 If $P\in \Psi^{-n}(M,E)$, then $|P|$ is strongly measurable, and we have
\begin{equation*}
 \bint |P|=  
 \lim_{j\rightarrow \infty} j \mu_j(P)= \frac1{n} (2\pi)^{-n} \int_{S^*M} \tr_E\big[ |\sigma(P)(x,\xi)| \big] dx d\xi.
\end{equation*}
\end{proposition}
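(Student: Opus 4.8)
The plan is to combine Birman--Solomyak's Weyl's law for the singular values of a negative-order \psido\ (Proposition~\ref{prop:Bir-Sol-asymp}(2)) with the second part of Proposition~\ref{prop:Bir-Sol.Weyl-mesurable}, which relates the noncommutative integral of $|A|$ to the asymptotics of $\mu_j(A)$ for Weyl operators $A\in\sW_{|1,\infty|}$.

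First I would observe that for $P\in\Psi^{-n}(M,E)$ we have $P\in\sL_{1,\infty}$ (this is the case $m=n$, $p=1$ of the discussion in \S\ref{sec:Bir-Sol-PDOs}), and moreover, by Proposition~\ref{prop:Bir-Sol-asymp}(1), both $P$ and $|P|$ are Weyl operators in $\sL_{1,\infty}$; in particular $P\in\sW_{|1,\infty|}$. Next I would invoke Proposition~\ref{prop:Bir-Sol-asymp}(2) with $p=1$, which gives
\begin{equation*}
 \lim_{j\rightarrow\infty} j\,\mu_j(P)= \frac1{n}(2\pi)^{-n}\int_{S^*M}\tr_E\big[|\sigma(P)(x,\xi)|\big]\,dxd\xi,
\end{equation*}
using that $|\sigma(P)(x,\xi)|^{p}=|\sigma(P)(x,\xi)|$ when $p=1$. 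Finally, since $P\in\sW_{|1,\infty|}$, Proposition~\ref{prop:Bir-Sol.Weyl-mesurable}(2) tells us $|P|$ is strongly measurable with $\bint|P|=\lim_{j\rightarrow\infty}j\,\mu_j(P)$, and substituting the previous display yields the claimed formula.

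There is essentially no obstacle here: the statement is a direct splicing of two results already stated in the excerpt, and the only bookkeeping is the identification $|\sigma(P)|^{1}=|\sigma(P)|$ and the fact that $\mu_j(P)=\mu_j(|P|)$. If anything needs a word of care, it is the remark (already made in the text preceding the statement) that $|P|$ need not itself be a \psido, so one cannot conclude via Connes' trace theorem directly---but this is precisely why one routes through the singular-value asymptotics and Proposition~\ref{prop:Bir-Sol.Weyl-mesurable}(2) rather than through Proposition~\ref{prop:NCInt.Connes-trace-thm}.
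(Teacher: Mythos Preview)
Your proposal is correct and follows exactly the paper's approach: the paper simply states that the result follows ``by combining~(\ref{eq:Weyl.Bir-Sol-mu}) and Proposition~\ref{prop:Bir-Sol.Weyl-mesurable}'', which is precisely your argument of splicing Birman--Solomyak's singular-value asymptotics (Proposition~\ref{prop:Bir-Sol-asymp}(2) with $p=1$) together with Proposition~\ref{prop:Bir-Sol.Weyl-mesurable}(2). Your additional remark that $|P|$ need not be a \psido\ (so one cannot invoke Connes' trace theorem directly) also matches the paper's comment preceding the statement.
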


It is interesting to note that Birman-Solomyak's Weyl's laws predate Connes' trace theorem by a decade. 

\section{Integration Formulas for $\LlogL$-Orlicz Potentials} \label{sec:NCInt-Orlicz} 
In this section, we explain how the Weyl's laws~(\ref{eq:Orlicz.Weyl-|QuP|})--(\ref{eq:Orlicz.Weyl-laws}) allow us to get stronger forms of Connes' trace theorem and Connes's integration formula. 

We keep on using the notation of the previous sections. In particular, we let $(M^n,g)$ be a closed Riemannian manifold and $E$ a Hermitian vector bundle over $M$. 

By combining Theorem~\ref{thm:Orlicz.WeylQuP} and Proposition~\ref{prop:Bir-Sol.Weyl-mesurable} we immediately the following trace theorem. 

\begin{theorem}\label{thm:Orlicz-trace-formula}
Let $P$ and $Q$ be operators in $\Psi^{-n/2}(M,E)$. If $u\in \LlogL(M,\End(E))$, then $QuP$ and $|QuP|$ are both strongly measurable operators, and we have
 \begin{gather}
 \bint QuP = \frac1{n} (2\pi)^{-n} \!  \int_{S^*M} \tr_E \big[ \sigma(Q)(x,\xi)u(x) \sigma(P)(x,\xi)\big]dxd\xi,
 \label{eq:Orlicz.trace-formula}\\
\bint \big|QuP\big| =
 \frac1{n} (2\pi)^{-n} \!  \int_{S^*M} \tr_E \big[\left|\sigma(Q)(x,\xi)u(x) \sigma(P)(x,\xi)\right|\big]dxd\xi.
 \label{eq:Orlicz.trace-formula-|QuP|} 
\end{gather}
 \end{theorem}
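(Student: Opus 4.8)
The plan is to deduce Theorem~\ref{thm:Orlicz-trace-formula} by simply combining the Weyl's laws of Theorem~\ref{thm:Orlicz.WeylQuP} with the general principle relating Weyl's laws to strong measurability, namely Proposition~\ref{prop:Bir-Sol.Weyl-mesurable}. There is essentially no new analytic content: all the work has already been done in establishing the Cwikel-type estimate~(\ref{eq:Orlicz.Cwikel-QuP-ME}) and the Weyl asymptotics~(\ref{eq:Orlicz.Weyl-|QuP|})--(\ref{eq:Orlicz.Weyl-laws}).

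First I would invoke Theorem~\ref{thm:Orlicz.WeylQuP}(1): for $P,Q\in \Psi^{-n/2}(M,E)$ and $u\in \LlogL(M,\End(E))$, the operator $QuP$ and its absolute value $|QuP|$ are Weyl operators in $\sL_{1,\infty}$, i.e., $QuP\in \sW_{1,\infty}$ and $QuP\in \sW_{|1,\infty|}$. Then Proposition~\ref{prop:Bir-Sol.Weyl-mesurable} applies: part~(1) gives that $QuP$ is strongly measurable with
\begin{equation*}
 \bint QuP = \Lambda^+(QuP) - \Lambda^-(QuP),
\end{equation*}
and part~(2) gives that $|QuP|$ is strongly measurable with $\bint |QuP| = \lim_j j\mu_j(QuP)$. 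The second identity immediately yields~(\ref{eq:Orlicz.trace-formula-|QuP|}) after substituting the value of $\lim_j j\mu_j(QuP)$ from~(\ref{eq:Orlicz.Weyl-|QuP|}).

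For~(\ref{eq:Orlicz.trace-formula}) the only mild subtlety is that $QuP$ need not be selfadjoint, so I cannot directly quote the selfadjoint formula~(\ref{eq:Bir-Sol.bint-Lambdapm-sa}). Instead I would write $QuP = \Re(QuP) + i\,\Im(QuP)$ with $\Re(QuP) = \frac12(QuP + P^*uQ^*)$ and $\Im(QuP) = \frac1{2i}(QuP - P^*uQ^*)$; each of these is (a real linear combination of) operators of the form $Q'uP'$ with $P',Q'\in\Psi^{-n/2}(M,E)$, hence still a Weyl operator, so the definition of $\sW_{1,\infty}$ via condition~(iii) applies and $\Lambda^\pm(QuP)=\Lambda^\pm(\Re QuP)+i\Lambda^\pm(\Im QuP)$. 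Applying~(\ref{eq:Bir-Sol.bint-Lambdapm-sa}) to the selfadjoint pieces, and using linearity of the NC integral on $\sM$ (which contains $\sMs$), one gets $\bint QuP = \bint \Re(QuP) + i\bint\Im(QuP)$, and each term is computed by the selfadjoint Weyl's law~(\ref{eq:Orlicz.Weyl-laws}). Assembling the pieces and using $\tr_E[A_+] - \tr_E[A_-] = \tr_E[A]$ pointwise in $(x,\xi)$ collapses the integrand to $\tr_E[\sigma(Q)u\sigma(P)]$, which is~(\ref{eq:Orlicz.trace-formula}). Alternatively, and more cleanly, one observes that $A \mapsto \bint A$ is a continuous linear functional on $\overline{\sV}\cap\sMs$ (by Proposition~\ref{prop:NCInt.strong-measurable} it is closed and $\bint$ is continuous there via the Dixmier-trace description), both sides of~(\ref{eq:Orlicz.trace-formula}) are continuous in $u$ with respect to the $\LlogL$-norm, they agree for $u\in C^\infty(M,\End(E))$ by the classical Connes trace theorem applied to the \psido\ $QuP\in\Psi^{-n}(M,E)$, and $C^\infty(M,\End(E))$ is dense in $\LlogL(M,\End(E))$.

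I do not expect any genuine obstacle: the statement is a formal corollary. The one point requiring a line of care is the non-selfadjoint case of~(\ref{eq:Orlicz.trace-formula}), handled as above by decomposition into real and imaginary parts; everything else is a direct quotation of Theorem~\ref{thm:Orlicz.WeylQuP} and Proposition~\ref{prop:Bir-Sol.Weyl-mesurable}.
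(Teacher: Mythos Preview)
Your proposal is correct and matches the paper's approach exactly: the paper's proof is literally the single sentence ``By combining Theorem~\ref{thm:Orlicz.WeylQuP} and Proposition~\ref{prop:Bir-Sol.Weyl-mesurable} we immediately [get] the following trace theorem,'' which is precisely your plan. One small remark: in your first route to~(\ref{eq:Orlicz.trace-formula}) you invoke the selfadjoint Weyl's law~(\ref{eq:Orlicz.Weyl-laws}) for $\Re(QuP)$ and $\Im(QuP)$, but that formula is stated only for operators of the form $P^*uP$ with $u^*=u$, which $\Re(QuP)$ is not; your second (density) route---strong measurability from Proposition~\ref{prop:Bir-Sol.Weyl-mesurable}(1), then the formula for smooth $u$ via Connes' trace theorem, then continuity in $u$ via the Cwikel estimate and continuity of any fixed Dixmier trace---is the clean one and is implicitly what the paper has in mind, since it mirrors the density-and-continuity argument already used in the proof of Theorem~\ref{thm:Orlicz.WeylQuP}.
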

 
\begin{remark}
In the scalar case, Rozenblum~\cite{Ro:arXiv21} established obtained a trace formula similar to~(\ref{eq:Orlicz.trace-formula}) 
for a larger class of potentials of the form $u=f\mu$, where  $\mu$ is an Alfhors-regular measure supported on a  regular submanifold $\Sigma \subset M$ and 
$f$ is a real-valued function in $\LlogL(\Sigma,\mu)$. As mentioned in Remark~\ref{rmk:NC.Rozenblum-measurability}, in~\cite{Ro:arXiv21} a weaker notion of measurability is used, but the same arguments as above show that the operators under consideration in~\cite{Ro:arXiv21} are strongly measurable in the sense used in this paper. 
\end{remark}

Specializing the above results to $Q=P=\Delta_E^{-n/4}$ allows us to get the following integration formula. 

\begin{corollary}\label{cor:Orlicz.int-formula-Delta}
If $u\in \LlogL(M,\End(E))$, then $\Delta_E^{-n/4}u\Delta_E^{-n/4}$ and $|\Delta_E^{-n/4}u\Delta_E^{-n/4}|$ are both strongly measurable operators, and we have
\begin{gather}
  \bint \Delta_E^{-\frac{n}{4}}u\Delta_E^{-\frac{n}{4}} = \frac1{n} (2\pi)^{-n} \! \int_M \tr_E\big[u(x)\big] \sqrt{g(x)}dx,
  \label{eq:Orlicz.int-formula-Delta}\\
 \bint \left|\Delta_E^{-\frac{n}{4}}u\Delta_E^{-\frac{n}{4}}\right| =  \frac1{n} (2\pi)^{-n} \! \int_M  \tr_E\big[|u(x)|\big] \sqrt{g(x)}dx.
\end{gather}
\end{corollary}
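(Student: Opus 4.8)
The plan is to obtain Corollary~\ref{cor:Orlicz.int-formula-Delta} as the special case $Q=P=\Delta_E^{-n/4}$ of Theorem~\ref{thm:Orlicz-trace-formula}, the only point needing a word being that $\Delta_E^{-n/4}$, defined through the functional calculus so that it vanishes on $\ker\Delta_E$, need not literally lie in $\Psi^{-n/2}(M,E)$ when $\Delta_E$ has a kernel. So first I would dispose of this: writing $\Pi_0$ for the (finite-rank) orthogonal projection onto $\ker\Delta_E$, one has $\Delta_E^{-n/4}=(\Delta_E+\Pi_0)^{-n/4}-\Pi_0$, where $P_0:=(\Delta_E+\Pi_0)^{-n/4}$ is a positive elliptic operator in $\Psi^{-n/2}(M,E)$ with principal symbol $\sigma(P_0)(x,\xi)=|\xi|_g^{-n/2}\,\id_{E_x}$, and $\Pi_0$ is smoothing. (If $\Delta_E$ is invertible there is nothing to do and $P_0=\Delta_E^{-n/4}$.)

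Next I would expand
\[
 \Delta_E^{-n/4}u\,\Delta_E^{-n/4}=P_0uP_0 - P_0u\Pi_0 - \Pi_0uP_0 + \Pi_0u\Pi_0 .
\]
The principal term $P_0uP_0$ is covered directly by Theorem~\ref{thm:Orlicz-trace-formula}: $P_0uP_0$ and $|P_0uP_0|$ are strongly measurable, with the stated integral formulas. Each of the remaining three terms contains a smoothing factor, so by Lemma~\ref{lem:Orlicz.Cwikel-smoothing} it lies in $\bigcap_{p>0}\sL_{p,\infty}$, hence in $(\sL_{1,\infty})_0$; thus $\Delta_E^{-n/4}u\,\Delta_E^{-n/4}=P_0uP_0+B$ with $B\in(\sL_{1,\infty})_0$. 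For the first identity I would then invoke Proposition~\ref{prop:NCInt.strong-measurable}(1), which gives strong measurability of $\Delta_E^{-n/4}u\,\Delta_E^{-n/4}$, together with the fact that $\bint$ is linear on $\sM$ and annihilates $(\sL_{1,\infty})_0$, so $\bint\Delta_E^{-n/4}u\,\Delta_E^{-n/4}=\bint P_0uP_0$. For the absolute-value identity I would use Proposition~\ref{prop:Bir-Sol.closedness-sing}(2) to see that $\Delta_E^{-n/4}u\,\Delta_E^{-n/4}\in\sW_{|1,\infty|}$ with $\Lambda(|\Delta_E^{-n/4}u\,\Delta_E^{-n/4}|)=\Lambda(|P_0uP_0|)$, and then Proposition~\ref{prop:Bir-Sol.Weyl-mesurable}(2) to conclude that $|\Delta_E^{-n/4}u\,\Delta_E^{-n/4}|$ is strongly measurable with $\bint|\Delta_E^{-n/4}u\,\Delta_E^{-n/4}|=\bint|P_0uP_0|$.

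Finally I would evaluate the right-hand sides of~(\ref{eq:Orlicz.trace-formula})--(\ref{eq:Orlicz.trace-formula-|QuP|}) for $Q=P=P_0$: since $\sigma(P_0)(x,\xi)u(x)\sigma(P_0)(x,\xi)=|\xi|_g^{-n}u(x)$ and $|\xi|_g=1$ on $S^*M$, carrying out the elementary fiber integration over the cospheres reduces the $S^*M$-integrals to integrals over $M$ against the Riemannian density, which yields~(\ref{eq:Orlicz.int-formula-Delta}) and its absolute-value counterpart. There is no genuine obstacle here: the corollary is a routine specialization of Theorem~\ref{thm:Orlicz-trace-formula}, and the only mildly delicate point, which I would take care to state cleanly, is the bookkeeping of the zero modes of $\Delta_E$ — handled entirely by the $(\sL_{1,\infty})_0$-invariance built into Propositions~\ref{prop:Bir-Sol.closedness-sing} and~\ref{prop:NCInt.strong-measurable}.
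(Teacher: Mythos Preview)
Your proposal is correct and follows the same route as the paper, which simply says ``Specializing the above results to $Q=P=\Delta_E^{-n/4}$'' and gives no further argument. You are in fact more careful than the paper: the paper tacitly treats $\Delta_E^{-n/4}$ as an element of $\Psi^{-n/2}(M,E)$ (a standard abuse, since it differs from one by a smoothing operator), whereas you spell out the zero-mode bookkeeping via $P_0=(\Delta_E+\Pi_0)^{-n/4}$ and the $(\sL_{1,\infty})_0$-invariance of the relevant functionals; this is a welcome clarification rather than a different method.
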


\begin{remark}
 In the scalar case, Sukochev-Zanin~\cite[Lemma~5.8]{SZ:arXiv21} got a weaker form of the integration formula~(\ref{eq:Orlicz.int-formula-Delta})
  for operators of the form $(1+\Delta_g)^{-n/4}f(1+\Delta_g)^{-n/4}$ with $f\in L_\infty(M)$. We recover this formula from~(\ref{eq:Orlicz.int-formula-Delta}), since the operators $(1+\Delta_g)^{-n/4}f(1+\Delta_g)^{-n/4}$ and $\Delta_g^{-n/4}f\Delta_g^{-n/4}$ agree modulo a trace-class operator. Note that the approach in~\cite{SZ:arXiv21} relies on a deep result of~\cite{DFWW:AIM04} asserting that $\Com(\sL_{1,\infty})$ is spanned by commutators $[A,B]$ with $A,B\in \sL_{2,\infty}$. \end{remark}
  
\begin{remark}\label{rmk:Int-Orlicz.SC-Connes} 
Combining~(\ref{eq:Intro.SC-Weyl}) and~(\ref{eq:Orlicz.int-formula-Delta}) shows that, if $V\in \LlogL(M,\End(E))$, $V(x)\geq 0$, then
\begin{equation*}
  \lim_{h \rightarrow 0^+} h^nN^{-}\big(h^n \Delta_E^{\frac{n}{2}}-V\big)=  \bint \Delta_E^{-\frac{n}{4}}V\Delta_E^{-\frac{n}{4}} = \frac1{n} (2\pi)^{-n} \! \int_M \tr_E\big[V q(x)\big] \sqrt{g(x)}dx. 
\end{equation*}
This gives a semiclassical interpretation of Connes' integration formula. The first equality is also a direct consequence of the fact that  
$ \Delta_E^{-{n}/{4}}u\Delta_E^{-{n}/{4}}$ is a Weyl operator in $\sL_{1,\infty}$ (see~\cite{Po:NCIntegration}; see also~\cite{MSZ:arXiv21}). 
\end{remark}

For applications in noncommutative geometry, as well as in mathematical physics,  it is often preferable to use Dirac-type operators instead of connection Laplacians. As a matter of fact, Connes' original formulation of the integration formula was stated in terms of the Dirac operator (see~\cite[Theorem~3]{Co:CMP88}). Thus, it is worth reformulating Corollary~\ref{thm:Orlicz.int-formulaDelta} and Corollary~\ref{eq:Orlicz.int-formula-Delta} in terms of Dirac operators. 

Assume that $E$ is Clifford module in the sense of~\cite{BGV:Springer92}. Let $\ssD_E:C^\infty(M,E)\rightarrow C^\infty(M,E)$ be the Dirac operator associated with a Hermitian Clifford connection $\nabla$ on $E$. Thus, $\ssD_E$ is a selfadjoint and its square $\ssD_E^2$ has same principal symbol as the corresponding connection Laplacian $\Delta_E$. 

For instance, if $M$ is spin and $E$ is the spinor bundle, then we may take $\ssD_E$ to be the usual Dirac operator $\ssD_M$. If $E$ is the bundle of differential forms on $M$, then we may take $\ssD_E$ to be the de Rham-Dirac operator $d+d^*$, so that $\ssD_E$ is the Hodge-de Rham Laplacian. If $M$ is K\"ahler manifold and $E$ is the bundle of anti-holomorphic forms, then we may take $\ssD_E$ to be the Dolbeault-Dirac operator $\overline{\partial}+\overline{\partial}^*$, in which case $\ssD_E^2$ is the Dolbeault Laplacian. There are numerous examples (see, e.g., \cite{BGV:Springer92}). 

Specializing Theorem~\ref{thm:Orlicz.WeylQuP} and Theorem~\ref{thm:Orlicz-trace-formula} to $P=Q=|\ssD_E|^{-n/2}$ leads to the following statement. 

\begin{proposition}\label{prop:Orlicz.Dirac}
 Under the above assumptions, let  $u\in \LlogL(M,\End(E))$. 
 \begin{enumerate}
\item The operators $|\ssD_E|^{-n/2}u|\ssD_E|^{-n/2}$ and $\big||\ssD_E|^{-n/2}u|\ssD_E|^{-n/2}\big|$  are both Weyl operators in $\sL_{1,\infty}$, and hence are strongly measurable. 

\item We have 
 \begin{gather*}
 \bint |\ssD_E|^{-\frac{n}2}u|\ssD_E|^{-\frac{n}2} = \frac1{n} (2\pi)^{-n} \! \int_M \tr_E\big[u(x)\big] \sqrt{g(x)}dx,\\
 \bint \left| |\ssD_E|^{-\frac{n}2}u|\ssD_E|^{-\frac{n}2}\right|= \lim_{j\rightarrow \infty} j\mu_j \left(|\ssD_E|^{-\frac{n}2}u|\ssD_E|^{-\frac{n}2}\right) 
 = \frac1{n} (2\pi)^{-n} \! \int_M \tr_E\big[|u(x)|\big] \sqrt{g(x)}dx. 
\end{gather*}

\item If $u(x)^*=u(x)$, then 
\begin{equation*}
  \lim_{j\rightarrow \infty} j\lambda^\pm_j\left(|\ssD_E|^{-\frac{n}2}u|\ssD_E|^{-\frac{n}2}\right)= \frac1{n} (2\pi)^{-n} \! \int_M  \tr_E\big[u(x)_\pm\big] \sqrt{g(x)}dx. 
\end{equation*}
\end{enumerate}
\end{proposition}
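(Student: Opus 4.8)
The plan is to simply invoke the general machinery already established — Theorem~\ref{thm:Orlicz.WeylQuP} and Theorem~\ref{thm:Orlicz-trace-formula} — with the specific choice $P=Q=|\ssD_E|^{-n/2}$, and then to compute the symbolic integrals explicitly. So the first thing I would do is record that $|\ssD_E|^{-n/2}$ is a well-defined operator in $\Psi^{-n/2}(M,E)$. This requires a small remark: since $\ssD_E$ is an elliptic selfadjoint first-order operator, $\ssD_E^2$ is a positive Laplace-type operator, $|\ssD_E| = (\ssD_E^2)^{1/2}$ agrees with $(1+\ssD_E^2)^{1/2}$ modulo a smoothing operator (more precisely, modulo lower-order terms on the complement of $\ker\ssD_E$; one handles $\ker\ssD_E$ by adding the finite-rank projection $\Pi_{\ker\ssD_E}$, which is smoothing), so $|\ssD_E|^{-n/2}\in\Psi^{-n/2}(M,E)$ with principal symbol $\sigma(|\ssD_E|^{-n/2})(x,\xi)=|\xi|^{-n/2}\,\id_{E_x}$. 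This is the only point requiring genuine (if routine) verification; the symbol computation uses that $\ssD_E^2$ has principal symbol $|\xi|^2\id_{E_x}$, which is part of the definition of a Dirac operator in \cite{BGV:Springer92}.

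Next, since $|\ssD_E|^{-n/2}$ is selfadjoint and $P=Q=P^*$, part~(1) is immediate from part~(1) of Theorem~\ref{thm:Orlicz.WeylQuP} together with part~(1) of Proposition~\ref{prop:Bir-Sol.Weyl-mesurable}: a Weyl operator in $\sL_{1,\infty}$ is automatically strongly measurable. For part~(2), I would apply~(\ref{eq:Orlicz.trace-formula}) and~(\ref{eq:Orlicz.trace-formula-|QuP|}) with $\sigma(P)(x,\xi)=\sigma(Q)(x,\xi)=|\xi|^{-n/2}\id_{E_x}$. Then
\begin{equation*}
 \sigma(Q)(x,\xi)u(x)\sigma(P)(x,\xi) = |\xi|^{-n}u(x),
\end{equation*}
so $\tr_E[\sigma(Q)u\sigma(P)] = |\xi|^{-n}\tr_E[u(x)]$ and $\tr_E[|\sigma(Q)u\sigma(P)|] = |\xi|^{-n}\tr_E[|u(x)|]$. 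Carrying out the $\xi$-integration over the cosphere bundle $S^*M$ — i.e., integrating $|\xi|^{-n}=1$ over the unit cosphere, which contributes a factor $|\bS^{n-1}|$, and noting $\int_{S^*M}\cdots\,dxd\xi = \int_M(\cdots)\sqrt{g(x)}dx\cdot|\bS^{n-1}|$ up to the normalization already absorbed in the statements — yields the right-hand sides displayed in part~(2). The formula $\bint\bigl||\ssD_E|^{-n/2}u|\ssD_E|^{-n/2}\bigr| = \lim_j j\mu_j(\cdots)$ is the content of part~(2) of Proposition~\ref{prop:Bir-Sol.Weyl-mesurable} combined with~(\ref{eq:Orlicz.Weyl-|QuP|}).

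For part~(3), when $u(x)^*=u(x)$ the operator $|\ssD_E|^{-n/2}u|\ssD_E|^{-n/2}$ is selfadjoint (it is of the form $P^*uP$ with $P=|\ssD_E|^{-n/2}$), so part~(3) of Theorem~\ref{thm:Orlicz.WeylQuP} applies directly; with $\sigma(P)(x,\xi)^*u(x)\sigma(P)(x,\xi) = |\xi|^{-n}u(x)$ one has $[\,|\xi|^{-n}u(x)\,]_\pm = |\xi|^{-n}u(x)_\pm$ since $|\xi|^{-n}>0$, and the $\xi$-integration proceeds exactly as before. The main — indeed the only — obstacle is the preliminary observation that $|\ssD_E|^{-n/2}$ is a classical $\Psi$DO of order $-n/2$ with the stated principal symbol, including the bookkeeping around $\ker\ssD_E$; once that is in hand, everything is a direct specialization of the already-proved general theorems and an elementary symbol calculation.
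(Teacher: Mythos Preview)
Your proposal is correct and matches the paper's approach exactly: the paper has no formal proof for this proposition, merely stating beforehand that it is obtained by ``specializing Theorem~\ref{thm:Orlicz.WeylQuP} and Theorem~\ref{thm:Orlicz-trace-formula} to $P=Q=|\ssD_E|^{-n/2}$,'' which is precisely what you do. Your additional care in verifying that $|\ssD_E|^{-n/2}\in\Psi^{-n/2}(M,E)$ with principal symbol $|\xi|^{-n/2}\id_{E_x}$ (including the handling of $\ker\ssD_E$) and in writing out the symbol computation is welcome detail that the paper leaves implicit.
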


\begin{remark}
 It would be interesting to obtain analogues of all the results of this paper for (curved) noncommutative tori.  Cwikel-type estimates, CLR-type inequalities, and integration formulas have been established in this setting (see~\cite{MP:Part1, MP:Part2}). In the critical case the results are established for potentials in $L_p$, $p>1$. It is also expected to have a version for the Birman-Solomyak's Weyl's laws for negative order \psidos\ on NC tori (see~\cite{MSZ:arXiv21, Po:Zeta}). Therefore, the main task is to establish Cwikel-type estimates for $\LlogL$-Orlicz potentials on NC tori.
\end{remark}

\end{document}